\pdfoutput=1

\documentclass[bj,authoryear,noshowframe]{imsart}

\usepackage{enumitem}
\RequirePackage{amsthm,amsmath,amsfonts,amssymb}
\usepackage{tikz}
\usetikzlibrary{arrows.meta, positioning, backgrounds, fit, calc,
                 decorations.pathreplacing, calligraphy}

\definecolor{nodecolor}{RGB}{60,100,160}
\definecolor{rootcolor}{RGB}{180,50,50}
\definecolor{edgecolor}{RGB}{60,60,60}

\allowdisplaybreaks[4]

\usepackage{xfrac} 
\usepackage{hyperref} 

\startlocaldefs
\theoremstyle{plain}

\newtheorem{theorem}{Theorem}[section]
\newtheorem{lemma}[theorem]{Lemma}
\newtheorem{corollary}[theorem]{Corollary}
\theoremstyle{definition}
\newtheorem{assumption}{Assumption}
\newtheorem{definition}[theorem]{Definition}
\newtheorem{remark}{Remark}

\newcommand{\Normal}{\text{N}}

\protected\def\[#1\]{\begin{equation}\begin{aligned}#1\end{aligned}\end{equation}}
\protected\def\(#1\){\begin{equation*}\begin{aligned}#1\end{aligned}\end{equation*}}

\endlocaldefs

\begin{document}

\begin{frontmatter}
\title{Consistency of Graphical Model-based Clustering: Robust Clustering using Bayesian Spanning Forest}
\runtitle{Consistency of Graphical Model-based Clustering}

\begin{aug}
\author[A]{\fnms{Yu}~\snm{Zheng}\ead[label=e1]{zheng.yu@ufl.edu}}
\author[A]{\fnms{Leo L.}~\snm{Duan}\ead[label=e3]{li.duan@ufl.edu}}
\and
\author[B]{\fnms{Arkaprava}~\snm{Roy}\ead[label=e2]{arkaprava.roy@ufl.edu}}

\address[A]{Department of Statistics, University of Florida\printead[presep={,\ }]{e1,e3}}

\address[B]{Department of Biostatistics, University of Florida\printead[presep={,\ }]{e2}}
\end{aug}

\begin{abstract}
Mixture model-based frameworks are very popular for statistical inference in clustering. While convenient for producing probabilistic estimates of cluster assignments and uncertainty, they are prone to misspecification, which can lead to inconsistent clustering results. Graphical model-based clustering adopts a different strategy, specifying the likelihood by treating data as dependently generated from a disjoint union of component graphs. Recent work on Bayesian spanning forests addresses graph uncertainty by using the integrated posterior of the node partition, marginalized over the latent edge distribution, to produce probabilistic clustering estimates. Despite strong empirical performance, theoretical guarantees such as consistency remain unclear, particularly when the true data-generating process deviates from the assumed graphical model. { This article establishes a positive asymptotic result: when data are generated from an unknown collection of component distributions and a mild asymptotic separation condition holds with probability tending to one (without requiring complete support separation), the posterior concentrates on the true partition, thereby yielding consistent clustering estimates, including the number of clusters.} Our results hold whether the number of clusters is fixed or increases with sample size. Additionally, we derive an upper bound on the expected misclassification rate. These results highlight graphical models as a robust alternative to mixture models in clustering.
\end{abstract}

\begin{keyword}
\kwd{Consistency of misspecified model}
\kwd{Gaussian oracle}
\kwd{Growing number of components}
\kwd{Misclassification rate}
\kwd{Node partition}
\kwd{Object-valued oracle}
\kwd{Refinement}
\end{keyword}

\end{frontmatter}

\section{Introduction}
Clustering aims to partition data into groups. To enable statistical inference on the clustering estimate, one assigns a generative model that involves a latent cluster assignment label for each data point, which leads to a probabilistic framework for characterizing clustering given the data. This model-based clustering framework is predominantly based on mixture models, where the data within a cluster are assumed to be independently and identically distributed (i.i.d.) from a component distribution, with many successful algorithms \citep{fraley2002model, zhong2003unified, baudry2010combining} further extending its popularity to what is seen today. On the theory front, mixture models are intimately related to the field of Bayesian nonparametric approaches, which assume the parameter space to be discrete and characterized by countably many choices. Infinite mixture models, such as the stick-breaking process mixture \citep{sethuraman1994constructive} and the mixture of finite mixtures \citep{miller2018mixture}, are a popular choice in this class, allowing the number of mixture components to be unbounded and the number of clusters (components with at least one data point assigned) to be estimated from the posterior distribution. As the number of data points increases, the posterior distribution of mixture models is shown to be consistent for estimating the ground-truth density governing the data-generating process, which is allowed to differ from the specified mixture model. Many consistent density estimation results have been established \citep{schwartz1965bayes, barron1999consistency, lijoi2005consistency, walker2007rates, nguyen2013convergence, petrone2014bayes}.

On the other hand, consistent estimation of clustering (the combinatorial partition of the data) is a much more challenging problem compared to density estimation. In particular, the conditions for establishing clustering consistency using mixture models are quite stringent regarding correct model specification. { Notably, when the distributional class of the component distributions is correctly specified, an overfitted mixture concentrates around the true mixture, thereby ensuring clustering consistency \citep{RousseauMengersen2011, VanHavreEtAl2015, GrazianRobert2018}.} In practice, however, correct specification faces difficulties on two fronts.

First, in specifying the mixing distribution that governs the component weights, \cite{miller2013simple, miller2014inconsistency} show that two popular infinite mixture models (Dirichlet process mixture and Pitman-Yor mixture) based on default choices of fixed hyperparameters lead to inconsistent estimates of the number of clusters, hence incorrect asymptotic targets for clustering. One remedy for achieving consistency is to calibrate the hyperparameters either deterministically based on the sample size \citep{ohn2023optimal, zeng2023consistent} or via a carefully chosen hyperprior \citep{Ascolani_2022}. Another remedy is to consider a finite mixture model with the number of components estimated either from the posterior distribution \citep{miller2018mixture} or using Bayes factors \citep{ishwaran2001bayesian, casella2014cluster, chib2016bayes, hairault2022evidence}.

Second, in specifying the component distribution that characterizes how data are conditionally i.i.d.\ within each cluster, misspecification is understandably common --- the mixture weights are a simple probability vector, but the choice of distribution family for each component is unlimited. \cite{miller2019robust} shows that misspecifying a skew-Gaussian mixture component as Gaussian leads to overestimation of the number of clusters, motivating their proposed power posterior to calibrate the effect of misspecification. \cite{cai2021finite} formally establishes the lack of robustness in the asymptotic regime, proving that slight model misspecification causes the posterior of finite mixture models to fail to concentrate on any finite number of clusters.

The risk of model misspecification and the lack of robustness inherent in mixture models clearly motivate the development of alternative likelihoods for clustering. Graphical models are an appealing choice. Specifically, one can imagine each cluster being associated with a likelihood based on a directed acyclic graph (DAG). By taking the union of these DAGs and assigning a prior distribution on their disjoint union, one obtains a generative model amenable to the canonical Bayesian paradigm for statistical inference. An early instance of this idea appears in single-linkage clustering, which yields consistent estimates albeit under a one-dimensional constraint \citep{hartigan1981consistency}. Single linkage is equivalent to restricting each DAG to be a tree whose undirected version is also acyclic. Although this restricts the family of DAGs, the impact on clustering is minor: from a clustering perspective, whether two points are connected directly or through a chain of edges, they belong to the same cluster regardless. The simplicity of tree graphs, together with efficient estimation algorithms, has motivated a growing body of work on unions of trees, also known as forests \citep{luo2021bayesian, luo2023nonstationary}.

With the above intuition, we note that the parameter of interest in clustering is the partition of nodes rather than the directed edges within each DAG. In this light, \cite{duan2023spectral} propose treating the edges in each DAG as latent variables and focusing on the integrated posterior with the edges marginalized out. Specifically, a Bayesian spanning forest model is used for graphical model-based clustering. Empirically, the point estimate of clustering is much improved compared to single-linkage clustering, owing to the Bayesian spanning forest model's explicit treatment of edge uncertainty. Theoretically, strong performance is explained by an asymptotic equivalence between the posterior mode (given the number of clusters) and the estimate produced by normalized spectral clustering \citep{ng2001spectral}, and by clustering consistency when data are generated from a forest graphical model. Nevertheless, it remains unknown whether the integrated posterior of the node partition is {\em robustly} consistent: if the data-generating mechanism differs from the specified graphical model, can the posterior still concentrate on a ground-truth partition for separable data points?

This article gives a positive answer for the Bayesian spanning forest model. When the data arise independently from unknown distributions given their labels, { the Bayesian spanning forest model can recover the ground-truth clustering under mild conditions. A key feature of our framework is its flexibility with respect to the true data-generating mechanism. We model the true density as a mixture whose component supports are only required to separate asymptotically, permitting arbitrary overlap at any finite $n$. No restrictions are placed on the functional form of the component densities, and the component distributions $G^{0,n}_k$ may differ across clusters in both shape and parametric family. Moreover, our main result (Theorem~\ref{thm:main}) does not require independence among observations within the same cluster; independence is introduced only in later sections to derive sharper results in concrete examples.}
To our knowledge, this is the first result in the model-based clustering literature showing that a potentially misspecified model can yield an asymptotically correct estimate. There are five key theoretical contributions. 
\begin{enumerate}
    \item Our findings demonstrate a feasible approach to bypassing the need for correct specification of the mixture component distribution.
    \item We show that the posterior enjoys consistency with {\em simultaneous} recovery of both the number of clusters and the true cluster labels; in the existing mixture model-based clustering literature, the latter is often achieved only under additional conditions, either restricting the family of data-generating distributions or assuming the number of clusters to be known.
    \item We develop the theory under very general conditions, allowing the number of true clusters to be fixed or diverging with the sample size, and further allowing the data dimension to grow with the sample size in certain more specific settings.
    \item When the number of clusters is assumed known, we provide a statistical upper bound on the misclassification rate.
    \item On the mathematical side, we develop a new refinement technique that may be of independent interest for the theoretical development of asymptotics in Bayesian clustering analysis.
\end{enumerate}

\section{Graphical model-based clustering}
{ We first introduce the notations, then describe the graphical model-based clustering framework and subsequently, derive the integrated posterior distribution under the Bayesian spanning forest model.}

\subsection{Notation}
We use $y_i$ to denote a data point in some metric space $\mathcal Y$. Let $[N]$ denote the data index set $\{1,2,\ldots,N\}$ for any positive integer $N$. The parameter of interest is a partition of $[n]$, $\mathcal V_n=(V_1,\ldots,V_K)$ associated with $K$ clusters: $\bigcup_{k=1}^KV_k=[n]$ and $V_k\bigcap V_{k'}=\emptyset$ for $k\not=k'$.
{ We denote the cardinality of $V_k$ by $|V_k|=n_k$.}
For a given partition $\mathcal V_n$, for any two points $s$ and $t$, we say $s\sim t$ under $\mathcal V_n$ if there exists some $k\in[K]$ such that $s,t\in V_k$; otherwise we say $s\not\sim t$ under $\mathcal V_n$.

{ Let $A\in[0,\infty)^{n\times n}$ be a matrix defined as $(A)_{ii}=0$ for $i\in[n]$ and $(A)_{ij}=f_{ij}=f(y_j|y_i;\theta)\ge0$, for $i\not=j$, where $f$ is some probability kernel to be specified later and $\theta$ is the set of parameters needed to characterize $f$.
Then $A_{V_k}$ denotes the submatrix of $A$ indexed by $V_k$ in both rows and columns.
For $A_{V_k}\in[0,\infty)^{n_k\times n_k}$, the Laplacian $L_{V_k}$ is a matrix of the same size, with $L_{V_k:i,j}=-A_{V_k:i,j}$ for $i\neq j$ and $L_{V_k:i,i}=\sum_{j\neq i}A_{V_k:i,j}$. In the case when $A$ is the weight matrix of a graph, $L_{V_k}$ is referred to as \emph{graph Laplacian} generated by the vertex set $V_k$ with weight matrix $A$. 
We write $L_{[n]}=:L_n$ for the Laplacian on $[n]$.
For a matrix $B$, we write $B[i]$ for the matrix obtained by removing the $i$-th row and column, and use $|\cdot|$ to denote the determinant.
For convenience, when the Laplacian is defined on a single point, we set $L_{\{i\}}=(0)$ and define $|L_{\{i\}}[1]|=1$. We use $J = \mathbf{1}\mathbf{1}^\top$ to denote the square matrix of all ones.}

For two sequences $\{a_n\}$ and $\{b_n\}$, we write $a_n\asymp b_n$ if there exist constants $C_1,C_2>0$, independent of $n$, such that $C_1b_n\leq a_n\leq C_2 b_n$ for sufficiently large $n$, and $a_n\precsim b_n$ if there exists a constant $C_1>0$ such that $a_n\leq C_1b_n$ for sufficiently large $n$. { In addition, we adopt the notation $a_n=o(b_n)$ to represent $\lim_{n\to\infty}a_n/b_n=0$.}

\subsection{Clustering with disjoint union of DAGs}

We now define the generative model used in the graphical model-based clustering framework. Associated with each cluster $V_k$, we consider a connected DAG, $O_k=(V_k,E_k, k^*)$ containing edges $E_k$, and root node $k^*\in V_k$.
We use $\mathcal E_{\mathcal V} = \{E_1,\ldots, E_K\}$ and $\mathcal R_{\mathcal V} = \{1^*,\ldots, K^*\}$ to denote the collections of edge sets and root nodes, respectively. Clearly, $(\mathcal V_n, \mathcal E_{\mathcal V_n}, \mathcal R_{\mathcal V_n})$ is a disjoint union of $K$ DAGs.

Consider the likelihood for data $y^{(n)}=\{y_1,\ldots,y_n\}$:
\[
P(y^{(n)}\mid \mathcal V_n, \mathcal E_{\mathcal V_n}, \mathcal R_{\mathcal V_n}, \theta) =  \prod_{k=1}^{K}
\bigg [ r(y_{k^*} ; \theta) \prod_{(i,j) \in E_k }f(y_i \mid y_j; \theta)
\bigg ],
\]
where each $r(\cdot;\theta)$ is the probability kernel of a {\em root} distribution that gives rise to the first data point in a cluster, and $f(\cdot\mid y_j;\theta)$ is the kernel of a {\em leaf} distribution that gives rise to a subsequent data point given an existing one in the cluster.
{
Here $\theta$ denotes the set of parameters characterizing these kernels. For example, a Gaussian kernel with shared variance $\sigma^2$ yields $f(y_i\mid y_j;\theta)\propto\exp(-d_{i,j}^2/(2\sigma^2))$, where $d_{i,j}$ is the distance between nodes and $\theta$ includes $\sigma^2$.
The kernel $r(\cdot;\theta)$ only characterizes the root node distribution and does not affect the within-cluster dependence structure. In particular, $r(\cdot;\theta)$ may be chosen as a diffuse density or even taken to be constant, since it need not integrate to one.
}

\subsection{Integrated posterior of clustering under Bayesian spanning forest model}

Let $\Pi_0(K,\mathcal V_n)$ be a partition probability function serving as the prior, and $\Pi_0(\mathcal E_{\mathcal V_n}, \mathcal R_{\mathcal V_n} \mid \mathcal V_n)$ the conditional prior for edges and roots.
Taking into account the formidably large combinatorial space of $\mathcal E_{\mathcal V_n}$ and $\mathcal R_{\mathcal V_n}$, we do not expect to have accurate estimates on these two parameters. Fortunately, our parameter of interest in clustering is $\mathcal V_{n}$ only, which can be characterized via the integrated posterior:
\[\label{eq:int_posterior_BSF}
\Pi^*(\mathcal V_n \mid y^{(n)}) =  \frac{
\sum_{\mathcal E_{\mathcal V_n}, \mathcal R_{\mathcal V_n}}
P(y^{(n)}\mid \mathcal V_n, \mathcal E_{\mathcal V_n}, \mathcal R_{\mathcal V_n}) \Pi_0(K,\mathcal V_n)\Pi_0(\mathcal E_{\mathcal V_n}, \mathcal R_{\mathcal V_n} \mid \mathcal V_n)
}{
\sum_{\mathcal V'_n, \mathcal E_{\mathcal V'_n}, \mathcal R_{\mathcal V'_n}}
P(y^{(n)}\mid \mathcal V'_n, \mathcal E_{\mathcal V'_n}, \mathcal R_{\mathcal V'_n}) \Pi_0(K,\mathcal V'_n)\Pi_0(\mathcal E_{\mathcal V'_n}, \mathcal R_{\mathcal V'_n} \mid \mathcal V'_n)
}.
\]

{
In this work, we consider spanning tree-based DAGs.
For a vertex set $V_k$ with $n_k$ nodes, a {\em spanning tree} is a connected acyclic graph whose vertex set is exactly $V_k$ and that contains $n_k-1$ edges.
When a root node $k^*\in V_k$ is specified and every edge is oriented away from the root so that each non-root node has exactly one parent, the resulting directed graph $O_k=(V_k,E_k,k^*)$ forms a {\em rooted spanning tree}, a special case of a DAG.
In this representation, each node in $V_k\setminus\{k^*\}$ has exactly one incoming edge from its parent, while the root has none.
Consequently, the disjoint union $(\mathcal V_n,\mathcal E_{\mathcal V_n},\mathcal R_{\mathcal V_n})$ forms a {\em spanning forest} consisting of $K$ rooted spanning trees corresponding to the clusters $V_1,\ldots,V_K$.
For clarity, Figure~\ref{fig:partition_forest} illustrates how a spanning forest is constructed from data and its corresponding partition.
We will show at the end of this subsection that the numerator in \eqref{eq:int_posterior_BSF} simplifies greatly when the summation is restricted to such spanning forests.
The Bayesian clustering model based on this structure is referred to as the {\em Bayesian spanning forest} (BSF) model \citep{duan2023spectral}.
}

\begin{figure}[ht]
\centering
\resizebox{\linewidth}{!}{
\begin{tikzpicture}[
    datanode/.style={circle, draw=nodecolor, fill=white, thick,
                     minimum size=7mm, inner sep=1pt, font=\scriptsize},
    clnode/.style={circle, draw=nodecolor, fill=white, thick,
                   minimum size=7mm, inner sep=1pt, font=\scriptsize},
    rootnode/.style={circle, draw=rootcolor, fill=white, very thick,
                     minimum size=7mm, inner sep=1pt, font=\scriptsize},
    cframe/.style={draw=black, thick, rounded corners=8pt,
                   inner sep=2pt, fill=none},
    >=Stealth,
    nlbl/.style={font=\small}
]

\begin{scope}[xshift=0cm]
  \node[datanode] (n1) at (0.4, 2.0) {$y_1$};
  \node[datanode] (n2) at (0.0, 0.9) {$y_2$};
  \node[datanode] (n3) at (0.8, 0.0) {$y_3$};
  \node[datanode] (n4) at (3.2, 2.6) {$y_4$};
  \node[datanode] (n5) at (3.8, 1.4) {$y_5$};
  \node[datanode] (n6) at (2.8, 0.8) {$y_6$};
  \node[datanode] (n7) at (0.4, 3.5) {$y_7$};
  \node[datanode] (n8) at (2.0, 3.7) {$y_8$};
  \node[font=\small] at (1.9, -1.2) {(a) Data $y^{(n)}$};
\end{scope}

\begin{scope}[xshift=5.8cm]
  \node[clnode] (p1) at (0.4, 2.0) {$y_1$};
  \node[clnode] (p2) at (0.0, 0.9) {$y_2$};
  \node[clnode] (p3) at (0.8, 0.0) {$y_3$};
  \node[clnode] (p4) at (3.2, 2.6) {$y_4$};
  \node[clnode] (p5) at (3.8, 1.4) {$y_5$};
  \node[clnode] (p6) at (2.8, 0.8) {$y_6$};
  \node[clnode] (p7) at (0.4, 3.5) {$y_7$};
  \node[clnode] (p8) at (2.0, 3.7) {$y_8$};

  \node[cframe, fit=(p1)(p2)(p3)] (fV1b) {};
  \node[cframe, fit=(p4)(p5)(p6)] (fV2b) {};
  \node[cframe, fit=(p7)(p8)]     (fV3b) {};

  \node[font=\footnotesize\bfseries, below=4pt]  at (fV1b.south)  {$V_1$};
  \node[font=\footnotesize\bfseries, below=4pt] at (fV2b.south)  {$V_2$};
  \node[font=\footnotesize\bfseries, above=4pt] at (fV3b.north) {$V_3$};

  \node[font=\small] at (1.9, -1.2) {(b) Partition $\mathcal{V}_n$};
\end{scope}

\begin{scope}[xshift=11.6cm]
  \node[rootnode] (t1) at (0.4, 2.0) {$y_1$};
  \node[clnode]   (t2) at (0.0, 0.9) {$y_2$};
  \node[clnode]   (t3) at (0.8, 0.0) {$y_3$};
  \node[rootnode] (t4) at (3.2, 2.6) {$y_4$};
  \node[clnode]   (t5) at (3.8, 1.4) {$y_5$};
  \node[clnode]   (t6) at (2.8, 0.8) {$y_6$};
  \node[rootnode] (t7) at (0.4, 3.5) {$y_7$};
  \node[clnode]   (t8) at (2.0, 3.7) {$y_8$};

  \draw[->, thick, edgecolor] (t1) -- (t2);
  \draw[->, thick, edgecolor] (t1) -- (t3);
  \draw[->, thick, edgecolor] (t4) -- (t6);
  \draw[->, thick, edgecolor] (t6) -- (t5);
  \draw[->, thick, edgecolor] (t7) -- (t8);

  \node[cframe, fit=(t1)(t2)(t3)] (fV1c) {};
  \node[cframe, fit=(t4)(t5)(t6)] (fV2c) {};
  \node[cframe, fit=(t7)(t8)]     (fV3c) {};

  \node[font=\footnotesize\bfseries, below=4pt]  at (fV1c.south)  {$V_1$};
  \node[font=\footnotesize\bfseries, below=4pt] at (fV2c.south)  {$V_2$};
  \node[font=\footnotesize\bfseries, above=4pt] at (fV3c.north) {$V_3$};

  \node[font=\small] at (1.9, -1.2) {(c) Spanning forest};
\end{scope}

\draw[->, thick, gray!65, line width=1.4pt] (4.5,  1.5) -- (5, 1.5);
\draw[->, thick, gray!65, line width=1.4pt] (10.3, 1.5) -- (10.8,1.5);

\end{tikzpicture}
}
\caption{
  From data to spanning forest.
  (a)~Eight data points with no assumed structure.
  (b)~A partition $\mathcal{V}_n = (V_1, V_2, V_3)$ groups nodes into
  three clusters (framed regions).
  (c)~One spanning forest compatible with $\mathcal{V}_n$: each cluster
  carries a rooted directed spanning tree, with edges oriented away from
  the root node $k^*$ (red). The edge sets $\mathcal{E}_{\mathcal{V}_n}$
  and root nodes $\mathcal{R}_{\mathcal{V}_n}$ are latent variables
  marginalized out of the integrated posterior~\eqref{eq:int_posterior_BSF}.
}
\label{fig:partition_forest}
\end{figure}

Since clustering is an unsupervised learning task aimed at grouping similar data points, the specific labels assigned to each $y_i$ are inconsequential. Accordingly, we introduce the following convention: two partitions $(V^1_1, \ldots, V^1_K)$ and $(V^2_1, \ldots, V^2_{K'})$ of $[n]$ are said to be \emph{equivalent}, denoted by $(V^1_1, \ldots, V^1_K) \sim (V^2_1, \ldots, V^2_{K'})$, if and only if $K = K'$ and there exists a bijection $\psi: [K] \to [K]$ such that $V^1_k = V^2_{\psi(k)}$ for all $k \in [K]$.

Under this equivalence relation, the space of partitions becomes a quotient space, where each equivalence class corresponds to a partition up to relabeling.
Given this structure, we define the integrated posterior probability of a partition (representing its entire equivalence class) as
\(
\Pi(\mathcal{V}_n \sim (V_1, \ldots, V_K) \mid y^{(n)}) &= \sum_{\mathcal{V}'_n : \mathcal{V}'_n \sim (V_1, \ldots, V_K)} \Pi^*(\mathcal{V}_n = \mathcal{V}'_n \mid y^{(n)}) \\&= K! \, \Pi^*(\mathcal{V}_n = (V_1, \ldots, V_K) \mid y^{(n)}),
\)
where the last equality holds since the equivalence class contains exactly $K!$ labelings, each assigned the same posterior probability.

For distinguishing purposes, throughout the paper, $\Pi(\mathcal{V}_n \mid y^{(n)})$ denotes the posterior probability of the equivalence class containing $\mathcal{V}_n$, whereas $\Pi^*(\mathcal{V}_n \mid y^{(n)})$ denotes the posterior probability that the partition is exactly equal to $\mathcal{V}_n$.

{
Following \cite{duan2023spectral}, we consider the prior
\(
\Pi_0(K,\mathcal V'_n)\Pi_0(\mathcal E_{\mathcal V'_n}, \mathcal R_{\mathcal V'_n} \mid \mathcal V'_n)
\propto
\lambda^{K}
1\!\left((\mathcal V'_n,\mathcal E_{\mathcal V'_n},\mathcal R_{\mathcal V'_n}) \text{ forms a spanning forest}\right)
\)
with some $\lambda>0$, together with a diffuse root kernel $r(\cdot)\equiv\delta$ for some $\delta>0$ and $K\in\{1,\ldots,n\}$. We recall the directed matrix-tree theorem, used to simplify the posterior probability $\Pi^*(\mathcal{V}_n \mid y^{(n)})$. An illustration on a three-node graph is provided in Section~S1.

\begin{lemma}[Directed matrix-tree theorem \citep{chaiken1978matrixtree}]
\label{lem:dir_mat_tree_thm}
Let $W=(w_{ij})$ be a weighted directed graph on vertex set $V$, and define the (possibly non-symmetric) graph Laplacian $L_V$ generated by the vertex set $V$ (with weight matrix $W$) as follow:
\(
L_{V:i,j}=
\begin{cases}
\sum_{m\neq i} w_{im}, & i=j,\\
-w_{ij}, & i\neq j .
\end{cases}
\)
Then for any $r_0\in V$,
\(
|L_V[r_0]|=\sum_{T\in\mathcal T_{r_0}}\prod_{(i,j)\in T} w_{ij},
\)
where $\mathcal T_{r_0}$ denotes the set of all directed spanning trees rooted at $r_0$.
\end{lemma}

Under the BSF model with root kernel $r(\cdot)\equiv\delta$, the integrated posterior can be written as
\[
\label{eq:post_prob_V_n_general}
\Pi^*(\mathcal V_n=(V_1,\ldots,V_K)|y^{(n)})
=&C_n\prod_{k=1}^K
\left[
\lambda
\sum_{r\in V_k}
\left(
r(y_r)
\sum_{\substack{E_k:\\ r_0\text{ is the root}}}
\prod_{(i,j)\in E_k} f(y_i\mid y_j;\theta)
\right)
\right] \\
=&C_n(\delta\lambda)^K
\prod_{k=1}^K
\left[
\sum_{r_0\in V_k}|L_{V_k}[r_0]|
\right],
\]
where $C_n$ is a normalizing constant, and $L_{V_k}$ is the graph Laplacian generated by the vertex set $V_k$. The second equality follows from Lemma~\ref{lem:dir_mat_tree_thm} with weights $w_{ij}=f(y_i\mid y_j;\theta)$. Furthermore, if the leaf kernel is symmetric, i.e.\ $f(y_i\mid y_j;\theta)=f(y_j\mid y_i;\theta)$, then the Laplacian becomes symmetric and all cofactors coincide.
Thus $|L[r_0]|$ equals any cofactor of $L$ and $\sum_{r_0\in V_k}|L_{V_k}[r_0]|=n_k|L_{V_k}[1]|$. This yields the classical Kirchhoff matrix-tree theorem; see, e.g., \cite{chaiken1978matrixtree}.
Substituting this identity into \eqref{eq:post_prob_V_n_general} yields
\[
\label{eq:post_prob_V_n}
\Pi^*(\mathcal V_n=(V_1,\ldots,V_K)|y^{(n)})=C_n(\delta\lambda)^K\prod_{k=1}^K\left[|L_{V_k}[1]|\cdot n_k\right]=C_n(\delta\lambda)^K\prod_{k=1}^K
\left|L_{V_k}+\frac{1}{n_k}J\right|,
\]
where the last equality follows from Remark~\ref{remark:det_Ln+J/n}, and $J=\mathbf{1}\mathbf{1}^\top$ is a conformable all-ones matrix.
Due to its computational simplicity and theoretical tractability, we focus on this symmetric-kernel setting in the remainder of the paper.

We now illustrate \eqref{eq:post_prob_V_n_general} and \eqref{eq:post_prob_V_n} with three elementary examples.
\begin{enumerate}
\item When each node forms its own cluster ($K=n$): there are no leaf kernel terms, and thus, the posterior reduces to $C_n(\delta\lambda)^n$.
\item When there is only one cluster ($K=1$): the posterior is $C_n\delta\lambda\sum_{i=1}^n|L_n[i]|$, where $L_n$ is the Laplacian formed by all $n$ nodes. When the leaf kernel is symmetric, all cofactors of $L_n$ coincide and thus, the posterior reduces to $C_n\delta\lambda n|L_n[1]|=C_n\delta\lambda|L_n+n^{-1}J|$.
\item When every two nodes form one cluster ($n=2K$): for each cluster, the contribution to the posterior is $r(y_1)f(y_2\mid y_1;\theta)+r(y_2)f(y_1\mid y_2;\theta)=\delta(|L_2[1]|+|L_2[2]|)$, since $r(\cdot)\equiv\delta$ and
\(
L_2=\begin{pmatrix}
f(y_1\mid y_2;\theta) & -f(y_1\mid y_2;\theta)\\
-f(y_2\mid y_1;\theta) & f(y_2\mid y_1;\theta)
\end{pmatrix},
\quad
L_2[1]=\begin{pmatrix}f(y_2\mid y_1;\theta)\end{pmatrix},
\quad
L_2[2]=\begin{pmatrix}f(y_1\mid y_2;\theta)\end{pmatrix}.
\)
The posterior is therefore $C_n(\delta\lambda)^{K}\prod_{k=1}^K\left[|L_{V_k}[1]|+|L_{V_k}[2]|\right]$. When the leaf kernel is symmetric, this reduces to $C_n(\delta\lambda)^K2^K\prod_{k=1}^K|L_{V_k}[1]|=C_n(\delta\lambda)^K\prod_{k=1}^K|L_{V_k}+2^{-1}J|$.
\end{enumerate}
}

\section{Problem setup and main results}\label{section:main}

\subsection{Label oracle}
{
Our goal is to establish posterior clustering consistency for the BSF model introduced in the previous section.
Posterior consistency studies whether the posterior distribution of a model parameter converges to a point mass at its true value as the number of observations tends to infinity, providing a frequentist calibration of the Bayesian procedure. Thus, there exists a true data-generating distribution with an underlying ground-truth parameter value for the observed data. This law may coincide with the fitted model (the correctly specified case) or differ from it (the misspecified case). Posterior consistency results for the BSF model in the correctly specified setting have been established in \cite{duan2023spectral}. Since that setting is somewhat restrictive, we focus here on the more general (misspecified) setting, where the truth is assumed to be a general mixture satisfying conditions described below.

Our target parameter is the clustering partition of the index set $[n]=\{1,\ldots,n\}$ for each sample size $n$.
We assume that there exists an underlying infinite clustering structure that is consistent across $n$.
Specifically, let
\(
(V_1^0, V_2^0, \ldots)
\)
be a partition of $\mathbb{N}^+$ such that
\(
V_k^0 \cap V_\ell^0 = \emptyset \quad (k \neq \ell),
\qquad
\bigcup_{k=1}^\infty V_k^0 = \mathbb{N}^+.
\)
For each $n \in \mathbb{N}^+$, define the induced partition of $[n]$ by
\(
V_k^{0,n} = V_k^0 \cap [n], \quad k=1,\ldots,K_{0,n},
\)
where $K_{0,n}$ denotes the number of nonempty clusters among the first $n$ indices. Note that $K_{0,n}$ may either remain bounded or diverge as $n\to\infty$.

This construction yields a nested sequence of partitions:
$(V_1^{0,n},\ldots,V_{K_{0,n}}^{0,n})$
is the restriction of
$(V_1^{0,n+1},\ldots,V_{K_{0,n+1}}^{0,n+1})$
to $[n]$. In particular, as $n$ increases, existing clusters persist and may grow, while new clusters may appear.

Let $z_i^*$ denote the cluster index such that $i \in V_{z_i^*}^0$.
Given the oracle partition, we assume that for each $n$,
\[ \label{oracle_data_generation} y_i \stackrel{\text{indep}}{\sim} G^{0,n}_{z_i^*}, \qquad i=1,\ldots,n, \]
where $\{G_k^{0,n}\}_{k=1}^{K_{0,n}}$ are cluster-specific distributions.

The distributions $G_k^{0,n}$ are allowed to depend on $n$, so observations across different sample sizes need not be nested.
Let $y^{(n)} := \{y_1,\ldots,y_n\}$ denote the sample of size $n$ generated under this scheme. We write $(\mathcal Y^{(n)},\mathcal F^{(n)},P_0^{(n)})$ for the probability space of $y^{(n)}$, where $P_0^{(n)}$ denotes the conditional distribution of $y^{(n)}$ given the oracle partition $(V_1^{0,n},\ldots,V_{K_{0,n}}^{0,n})$. More specifically, $P^{(n)}_0=\prod_{k=1}^{K_{0,n}}\prod_{i\in V_k}G^{0,n}_{z^*_i}$ with $P^{(n)}_0(\mathcal Y^{i-1}\times \mathcal B\times \mathcal Y^{n-i})=G^{0,n}_{z^*_i}(\mathcal B)$ for $\mathcal B\subset\mathcal Y$ in the $\sigma$-algebra of $G^{0,n}_{z^*_i}$; here $\times$ denotes the Cartesian product.
Since the distributions $G^{0,n}_k$ may vary with $n$, the sequence $y^{(n)}$ need not be nested.
Thus $y^{(n)}$ should not be viewed as a subsequence of $y^{(n+1)}$. Instead, we consider a triangular array of observations $y^{(1)},y^{(2)},\ldots,$ with $y^{(n)} = \{y_{n,1},\ldots,y_{n,n}\}$. For brevity, we write $y^{(\infty)} := \{y^{(1)},y^{(2)},\ldots\}$. When the sample size $n$ is fixed, we denote the elements of $y^{(n)}$ simply by $y_1,\ldots,y_n$, omitting the first index in $y_{n,i}$ to simplify notation.
}

\begin{remark}
From a frequentist perspective, when $n$ is given, the partition $(V_1^{0,n},\ldots,V_{K_{0,n}}^{0,n})$ (including $K_{0,n}$) should be viewed as fixed, with $y^{(n)}$ as the random variable.
\end{remark}

There are two main reasons for adopting the oracle specification in \eqref{oracle_data_generation}, in which data points within the same cluster are i.i.d.\ from some component distribution $G^0_k$.
First, it is natural to assume that observations from a given cluster are i.i.d.\ from a common distribution, even though the distributions $\{G^0_k\}_{k=1}^{K_0}$, as part of the oracle, are unknown and need not belong to any specific parametric family.
Second, under the BSF model, the integrated likelihood of the data in cluster $V_k$ is proportional to $|L_{V_k} + n_k^{-1} J|$, as shown in \eqref{eq:post_prob_V_n}, which is invariant to any permutation of data indices within $V_k$ for any $n_k \geq 1$. { When, in addition, the oracle distributions $\{G_k^{0,n}\}$ do not depend on $n$ for each fixed $k$,} this property is known as infinite exchangeability \citep{aldous1983exchangeability}.
By de Finetti's theorem \citep{hewitt1955symmetric, diaconis1980de, diaconis1980finite}, this implies that there exists some parameter $\zeta_k$ such that, conditional on $\zeta_k$, the observations $\{y_i : i \in V_k\}$ are i.i.d.\ from some distribution. Thus the BSF model specification is equivalent to an \emph{implicitly specified} finite mixture model.
To clarify, this does not mean that the implicit component distributions under the BSF model coincide with the oracle distributions $G^0_k$. We will nevertheless show that partition consistency can be achieved even without knowing whether the BSF model is correctly specified.

\subsection{Main results}\label{subsec:main_results}

We begin by formally defining posterior consistency for clustering.

\begin{definition}[Posterior consistency for clustering]
The posterior for the clustering $\mathcal V_n$ is said to be {\it consistent} at $(V_1^{0,\infty},V_2^{0,\infty},\ldots)$ if $\Pi(\mathcal V_n\not\sim(V_1^{0,n},\ldots,V_{K_{0,n}}^{0,n})|y^{(n)})\stackrel{n\to \infty}{\to} 0$ in $P^{(n)}_0$-probability.
\end{definition}

The following lemma, which follows immediately from the uniform boundedness of $\{\Pi(\mathcal V_n\not\sim(V_1^{0,n},\ldots,V_{K_{0,n}}^{0,n})|y^{(n)})\}_{n=1}^\infty$, gives an equivalent characterization of the definition.
\begin{lemma}
Clustering consistency is achieved at $(V_1^{0,\infty},V_2^{0,\infty},\ldots)$ if and only if
\(\mathbb E_{P^{(n)}_0}[\Pi(\mathcal V_n\not\sim(V_1^{0,n},\ldots,V_{K_{0,n}}^{0,n})|y^{(n)})]\stackrel{n\to \infty}{\to} 0.\)
\end{lemma}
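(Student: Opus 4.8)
The plan is to treat $X_n := \Pi(\mathcal V_n\not\sim(V_1^{0,n},\ldots,V_{K_{0,n}}^{0,n})\mid y^{(n)})$ as a single $[0,1]$-valued random variable on $(\mathcal Y^{(n)},\mathcal F^{(n)},P^{(n)}_0)$, and to recognize that the claimed equivalence is exactly the standard fact that, for a uniformly bounded sequence, convergence in probability and convergence in mean coincide. Because $X_n$ is a posterior probability, we have $0\le X_n\le 1$ for every $n$ and every realization of $y^{(n)}$, and this is the only structural input the argument requires. I would state this boundedness explicitly at the outset, since it is precisely the hypothesis the lemma alludes to.

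For the reverse implication, suppose $\mathbb E_{P^{(n)}_0}[X_n]\to 0$. Fixing $\epsilon>0$, Markov's inequality gives $P^{(n)}_0(X_n>\epsilon)\le \epsilon^{-1}\,\mathbb E_{P^{(n)}_0}[X_n]\to 0$, which is exactly the statement $X_n\to 0$ in $P^{(n)}_0$-probability, i.e. consistency. This direction uses nothing beyond nonnegativity of $X_n$.

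For the forward implication, suppose $X_n\to 0$ in probability. Fixing $\epsilon>0$ and splitting the expectation according to whether $X_n$ exceeds $\epsilon$, the bound $0\le X_n\le 1$ yields $\mathbb E_{P^{(n)}_0}[X_n]\le \epsilon\,P^{(n)}_0(X_n\le\epsilon)+P^{(n)}_0(X_n>\epsilon)\le \epsilon+P^{(n)}_0(X_n>\epsilon)$. Letting $n\to\infty$ drives the last probability to $0$, so $\limsup_n \mathbb E_{P^{(n)}_0}[X_n]\le\epsilon$; since $\epsilon>0$ is arbitrary, the mean converges to $0$.

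The only point requiring care — and the reason the statement is phrased as an iff for a uniformly bounded quantity rather than as a direct appeal to a single bounded-convergence theorem — is that the $X_n$ live on a sequence of distinct probability spaces $(\mathcal Y^{(n)},\mathcal F^{(n)},P^{(n)}_0)$ indexed by $n$, so there is no ambient space supporting an almost-sure convergence argument. I sidestep this entirely via the per-$n$ truncation estimate above, which compares $\mathbb E_{P^{(n)}_0}[X_n]$ and $P^{(n)}_0(X_n>\epsilon)$ at the same $n$ before any limit is taken. Given how elementary each step is, I anticipate no genuine obstacle; the main thing to get right is making the role of the uniform bound $X_n\le 1$ transparent in the forward direction, as convergence in probability alone would not control the mean without it.
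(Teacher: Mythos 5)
Your proof is correct and is exactly the argument the paper has in mind: it states the lemma as following "immediately from the uniform boundedness" of the posterior probabilities, i.e., the standard equivalence of convergence in probability and convergence in mean for $[0,1]$-valued variables, which you carry out via Markov's inequality in one direction and the truncation bound $\mathbb E[X_n]\le\epsilon+P^{(n)}_0(X_n>\epsilon)$ in the other. Your remark about the per-$n$ estimate avoiding any appeal to a common probability space is a careful touch, but the approach is the same as the paper's.
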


Our results in this section develop general conditions for clustering consistency. Subsequently, Section~\ref{section:examples} illustrates their implications for specific oracle data-generating distributions.
Throughout, we make the following assumptions on the root kernel $r(\cdot)$ and the true number of clusters $K_{0,n}$.

\begin{assumption}
\label{ass1}
There exist constants $C_1, C_2 > 0$ such that, for sufficiently large $n$,
\(
C_1 \delta_n \leq \min_{i \in [n]} r(y_i) \leq \max_{i \in [n]} r(y_i) \leq C_2 \delta_n.
\)
\end{assumption}

\begin{assumption}
\label{ass2}
$K_{0,n}=o(\sqrt{n})$.
\end{assumption}

\begin{remark}
In Assumption~\ref{ass1}, $\delta_n$ depends on $n$. Specifically, we allow the root kernel to become more diffuse while remaining bounded away from zero as $n$ increases. In particular, the root kernel may be taken as the flat density $r(\cdot)\equiv\delta_n\equiv 1$.
\end{remark}

\begin{remark}
Assumption~\ref{ass2} imposes a growth condition on the true number of clusters $K_{0,n}$. This constraint is not imposed in the BSF model specification, which does not incorporate any prior knowledge about $K_{0,n}$.
\end{remark}

In the following, we write $f^{(n)}_{st}=f(y_t|y_s;\theta_n)$ for the conditional probability kernel between data points $y_s$ and $y_t$, referred to hereafter as the conditional kernel. The value of $f^{(n)}_{st}$ quantifies the probabilistic similarity between two data points. The key step in establishing our results is to control these conditional kernels efficiently via $\theta_n$. Dependence of $f_{st}^{(n)}$ on $n$ is not uncommon in large-sample analysis. For example, when $f_{st}^{(n)}=(\sqrt{2\pi}\sigma_n)^{-p}\exp\left\{-\|y_s-y_t\|_2^2/(2\sigma_n^2)\right\}$ is a Gaussian kernel, the decay rate of $\sigma_n$ controls the strength of dependence in $f_{st}^{(n)}$. Such sample-size- and dimension-dependent hyperparameter specifications are standard in asymptotic analysis of statistical methods \citep{castillo2015bayesian}. With controls on $(f_{st}^{(n)},\delta_n,\lambda_n)$, we define the following set $\mathcal D^{(\infty)}$:
\(
\begin{aligned}
\mathcal D^{(\infty)}:=\bigg\{y^{(\infty)}:&\frac{\sup_{s\not\sim t;s,t\in[n]}f^{(n)}_{st}}{\delta_n\lambda_n}\precsim (K_{0,n}-1+\iota_1)^{-n}\text{ for a fixed constant }\iota_1>0;\\
&\frac{\delta_n\lambda_n}{\inf_{s'\sim t';s',t'\in[n]}f^{(n)}_{s't'}}\precsim (K_{0,n}+1+\iota_2)^{-n}\text{ for a fixed constant }\iota_2>0\bigg\}.
\end{aligned}
\)
We clarify that the conditions defining $\mathcal{D}^{(\infty)}$ hold for sufficiently large $n$, that $s\sim t$ is understood under the oracle clustering $(V^{0,\infty}_1,V^{0,\infty}_2,\ldots)$, and that different values of $n$ correspond to different instances of $(y^{(n)}, f^{(n)}_{st}, \delta_n, \lambda_n, K_{0,n})$.

The set $\mathcal D^{(\infty)}$ collects sequences of data points that asymptotically satisfy inequalities shown to be essential for clustering consistency; it may therefore be regarded as a {\em nice} set.

\begin{remark}
The conditions defining $\mathcal D^{(\infty)}$ require both $\sfrac{\sup_{s\not\sim t;s,t\in[n]}f^{(n)}_{st}}{\delta_n\lambda_n}$ and $\sfrac{\delta_n\lambda_n}{\inf_{s'\sim t';s',t'\in[n]}f^{(n)}_{s't'}}$ to decay at least exponentially fast.
Intuitively, this exponential control stems from the combinatorial complexity of the partition space: the number of ways to partition $n$ data points into $K$ nonempty clusters grows as $O(K^n)$. For the posterior to concentrate on the oracle partition, the between-cluster kernel values must decay exponentially, while the within-cluster kernel values must grow exponentially, relative to the scaling parameter $\delta_n\lambda_n$.
\end{remark}

The following lemma shows that, under mild conditions, $\mathcal D^{(\infty)}$ is a subset of $\{y^{(\infty)}:\Pi(\mathcal V_n\not\sim(V_{1}^{0,n},\ldots,V_{K_{0,n}}^{0,n})|y^{(n)})=o(1)\}$, which is the key condition for our subsequent posterior consistency result. In other words, on $\mathcal D^{(\infty)}$, the posterior of $\mathcal{V}_n$ concentrates strongly around the true partition.

\begin{lemma}
\label{lem:subset}
Suppose Assumptions~\ref{ass1} and \ref{ass2} hold. Then
\(
\mathcal D^{(\infty)}\subset \{y^{(\infty)}:\Pi(\mathcal V_n\not\sim(V_{1}^{0,n},\ldots,V_{K_{0,n}}^{0,n})|y^{(n)})=o(1)\}.
\)
\end{lemma}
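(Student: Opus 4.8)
The plan is to argue directly at the level of the unnormalized weights appearing in \eqref{eq:post_prob_V_n}. Write $w^*(P)=(\delta_n\lambda_n)^{|P|}\prod_{B\in P}|L_B+\tfrac{1}{|B|}J|=(\delta_n\lambda_n)^{|P|}\prod_{B\in P}|B|\,|L_B[1]|$ for the (relabeling-invariant) weight attached to a set partition $P$, and let $P^0=(V_1^{0,n},\ldots,V_{K_{0,n}}^{0,n})$ denote the oracle. Since each equivalence class of a $K$-block partition contains exactly $K!$ equally weighted labelings, $\Pi(\mathcal V_n\not\sim P^0\mid y^{(n)})=\sum_{P\neq P^0}|P|!\,w^*(P)/\sum_{P}|P|!\,w^*(P)$. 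Bounding the denominator below by the single oracle term $K_{0,n}!\,w^*(P^0)$, it suffices to show that $\sum_{P\neq P^0}|P|!\,w^*(P)/(K_{0,n}!\,w^*(P^0))\to0$ for every fixed $y^{(\infty)}\in\mathcal D^{(\infty)}$; this is a deterministic claim given the data, so no probabilistic argument enters here. Assumption~\ref{ass1} lets me replace $\sum_{i\in V_k}r(y_i)$ by $|V_k|\delta_n$ at the cost of one bounded constant per block, which will be dominated by the exponentially small ratios defining $\mathcal D^{(\infty)}$.

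I would then split $\{P\neq P^0\}$ into the \emph{refinements} of $P^0$ (each block lies inside a single oracle cluster) and the \emph{crossing} partitions (some block meets at least two oracle clusters). For refinements the sum factorizes over oracle clusters. Using $|P|!/K_{0,n}!\le n^{|P|-K_{0,n}}=\prod_k n^{p_k-1}$, where $p_k$ is the number of blocks inside $V_k^{0,n}$, together with the weighted all-minors (forest) matrix--tree identity $\sum_{P_k}(n\delta_n\lambda_n)^{p_k}\prod_{B}|B|\,|L_B[1]|=\det(L_{V_k^{0,n}}+n\delta_n\lambda_n I)$, each per-cluster factor evaluates in closed form to $\prod_{i\ge2}\bigl(1+n\delta_n\lambda_n/\mu_i^{(k)}\bigr)$, with $\mu_i^{(k)}$ the nonzero eigenvalues of $L_{V_k^{0,n}}$. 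Because the within-cluster affinities are bounded below, the algebraic connectivity obeys $\mu_2^{(k)}\ge |V_k^{0,n}|\inf_{s\sim t}f^{(n)}_{st}\ge\inf_{s\sim t}f^{(n)}_{st}$, so the second condition of $\mathcal D^{(\infty)}$ gives $n\delta_n\lambda_n/\mu_i^{(k)}\precsim n(K_{0,n}+1+\iota_2)^{-n}$. The whole refinement sum is therefore at most $(1+n\delta_n\lambda_n/\inf_{s\sim t}f^{(n)}_{st})^{\,n}\le\exp\{n^2(K_{0,n}+1+\iota_2)^{-n}(1+o(1))\}\to1$, and its excess over the oracle term (which equals $1$) tends to $0$.

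For the crossing partitions (which occur only when $K_{0,n}\ge2$) I would compare each $P$ with its common refinement $Q=P\wedge P^0$, a refinement of $P^0$ already controlled above. The key estimate is that merging $q\ge2$ oracle-portions $W_1,\ldots,W_q$ into one block $U$ inflates the weight by at most $(2|U|)^{q-1}\bigl(\sup_{s\not\sim t}f^{(n)}_{st}/(\delta_n\lambda_n)\bigr)^{q-1}(1+o(1))\precsim\psi_n^{\,q-1}$, where $\psi_n=2n(K_{0,n}-1+\iota_1)^{-n}\to0$ by the first condition of $\mathcal D^{(\infty)}$. This follows from Weyl's inequality and Cauchy interlacing applied to $L_U=L_U^{\mathrm{in}}+L_U^{\mathrm{out}}$: the $q-1$ eigenvalues lifted out of the kernel of the block-diagonal within-portion Laplacian $L_U^{\mathrm{in}}$ are each at most $\lambda_{\max}(L_U^{\mathrm{out}})\le 2|U|\sup_{s\not\sim t}f^{(n)}_{st}$, while the remaining nonzero eigenvalues are perturbed only negligibly since $\sup_{s\not\sim t}f^{(n)}_{st}/\inf_{s\sim t}f^{(n)}_{st}$ is super-exponentially small; translating back through $|U|\,|L_U[1]|=\prod_{i\ge2}\nu_i$ yields the stated factor, with $\prod_{\text{nonzero}}\mu^{\mathrm{in}}=\prod_l|W_l|\,|L_{W_l}[1]|$. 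Summing the products $\prod_{\text{blocks}}\psi_n^{\,q-1}$ over all admissible coarsenings of the blocks of a fixed $Q$ gives an excess of order $\binom{n}{2}\psi_n=o(1)$ (the series is dominated by its first nontrivial term, as consecutive Stirling-number ratios are $O(n^2)$ and $n^2\psi_n\to0$); summing over $Q$ against the already-bounded refinement sum then yields a total crossing contribution of $o(1)$.

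The main obstacle is the crossing analysis: both extracting the clean spectral bound on $|L_U[1]|$ for a block straddling several oracle clusters, and—more substantively—organizing the sum over the Bell-number-many crossing partitions so that the exponentially small between-cluster ratios overwhelm the combinatorial explosion. Routing this bookkeeping through the common refinement $P\wedge P^0$ is precisely the refinement technique advertised in the introduction. Assumption~\ref{ass2} enters exactly here, to ensure that the polynomial prefactors $(2|U|)^{q-1}$, accumulated over up to $K_{0,n}$ merged portions and hence of size $\exp\{O(K_{0,n}\log n)\}$, stay negligible against the exponential decay $(K_{0,n}\pm1+\iota)^{-n(q-1)}$, which holds whenever $K_{0,n}\log n=o(n)$. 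Combining the refinement and crossing bounds gives $\Pi(\mathcal V_n\not\sim P^0\mid y^{(n)})\to0$ on $\mathcal D^{(\infty)}$, which is the claimed inclusion.
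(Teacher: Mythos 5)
Your proposal is correct in strategy and reaches the same conclusion, but it takes a genuinely different route from the paper. The paper bounds each individual ratio $\Pi^*(\mathcal V_n=(V_1,\ldots,V_K)\mid y^{(n)})/\Pi^*(\mathcal V_n=(V^0_1,\ldots,V^0_{K_0})\mid y^{(n)})$ by passing through the intersections $W_{ij}=V_i\cap V^0_j$ in \emph{both} directions (its Lemmas~\ref{lem:ratio_det1} and \ref{lem:ratio_det2}), and then sums over alternatives grouped by cluster count into three sums $S_1,S_2,S_3$ using the crude count $K^n$ of $K$-block partitions; you instead split the alternative space into refinements of $P^0$ versus crossing partitions, evaluate the entire refinement sum in near-closed form via the rooted-forest expansion $\sum_P x^{|P|}\prod_B|B|\,|L_B[1]|=\det(L+xI)=x\prod_{i\ge2}(\mu_i+x)$, and control crossings by a per-merge Weyl/interlacing estimate routed through the meet $P\wedge P^0$ and a Stirling-number geometric series. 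The underlying spectral ingredients are the same (Weyl's inequality, the algebraic-connectivity bound $\mu_2\ge|B|\gamma$, and $|L_B+|B|^{-1}J|=\prod_{i\ge2}\lambda_i$), and the two conditions of $\mathcal D^{(\infty)}$ play identical roles in both arguments. What your route buys is sharper bookkeeping: the refinement excess is $\exp\{O(n^2\delta_n\lambda_n/\gamma_n)\}-1$ and the crossing excess $O(n^3\varepsilon_n/(\delta_n\lambda_n))$, rather than $K^n$ times an exponentially small ratio, and the forest identity replaces Lemma~\ref{lem:ratio_det2} entirely; what the paper's route buys is a uniform bound on each \emph{single} ratio, which is reused verbatim to prove the misclassification-rate Lemma~\ref{lem:mis_rate}, something your aggregated bounds do not directly deliver. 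Two details you should make explicit when writing this up: (i) the portions $W_1,\ldots,W_q$ merged into one block of $P$ necessarily lie in \emph{distinct} oracle clusters (because $P\wedge P^0$ intersects each $P$-block with each $V^0_j$ in at most one piece), which is exactly what makes every between-portion edge weight at most $\sup_{s\not\sim t}f^{(n)}_{st}$ and hence validates $\lambda_{\max}(L_U^{\mathrm{out}})\le 2|U|\sup_{s\not\sim t}f^{(n)}_{st}$; and (ii) your stated role for Assumption~\ref{ass2} is not quite where it bites --- the polynomial prefactors are already absorbed by the geometric series since $n^3(K_{0,n}-1+\iota_1)^{-n}\to0$ for $K_{0,n}\ge2$; the growth restriction on $K_{0,n}$ matters for uniformity of the $(1+o(1))$ corrections and the constants $C_2^{|P|}/C_1^{K_{0,n}}$ from Assumption~\ref{ass1}, which accumulate as $\exp\{O(K_{0,n})\}$ and $\exp\{O(n\varepsilon_n/\gamma_n)\}$ and must be checked against the decay rates when $K_{0,n}$ diverges.
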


For a more concrete characterization of $\mathcal D^{(\infty)}$, define, for any positive constants $(c_1,c_2,\iota_1,\iota_2)\stackrel{\Delta}{=}\phi$:
\(
\begin{aligned}
\mathcal D^{(n)}_\phi:=\bigg\{y^{(n)}:&\frac{\max_{s\not\sim t;s,t\in[n]}f^{(n)}_{st}}{\delta_n\lambda_n}\leq c_1(K_{0,n}-1+\iota_1)^{-n},\\
&\frac{\delta_n\lambda_n}{\min_{s'\sim t';s',t'\in[n]}f^{(n)}_{s't'}}\leq c_2(K_{0,n}+1+\iota_2)^{-n}\bigg\}.
\end{aligned}
\)

\begin{theorem}[General clustering consistency under BSF]
\label{thm:main}
Suppose Assumptions~\ref{ass1} and \ref{ass2} hold. Then
\(
\mathbb E_{P^{(n)}_0}[\Pi(\mathcal V_n\not\sim(V_1^{0,n},\ldots,V_{K_{0,n}}^{0,n})|y^{(n)})]\stackrel{n\to \infty}{\to} 0,
\)
if $P_0^{(n)}(y^{(n)}\not\in \mathcal D^{(n)}_\phi)\stackrel{n\to\infty}{\to}0$ for a fixed constant $\phi$.
\end{theorem}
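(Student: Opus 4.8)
The plan is to decompose the expectation according to whether the sample lands in the nice set $\mathcal D^{(n)}_\phi$, dispatch the complement using the theorem's hypothesis, and reduce the remaining piece to a \emph{uniform} version of Lemma~\ref{lem:subset}. Write $q_n(y^{(n)}) := \Pi(\mathcal V_n\not\sim(V_1^{0,n},\ldots,V_{K_{0,n}}^{0,n})\mid y^{(n)})$, a measurable function of $y^{(n)}$ valued in $[0,1]$. Since the integrand is dominated by $1$ off $\mathcal D^{(n)}_\phi$, I would bound
\(
\mathbb E_{P^{(n)}_0}[q_n(y^{(n)})]
&= \mathbb E_{P^{(n)}_0}\big[q_n(y^{(n)})\,\mathbf 1_{\{y^{(n)}\in\mathcal D^{(n)}_\phi\}}\big]
+ \mathbb E_{P^{(n)}_0}\big[q_n(y^{(n)})\,\mathbf 1_{\{y^{(n)}\notin\mathcal D^{(n)}_\phi\}}\big]\\
&\le \sup_{y^{(n)}\in\mathcal D^{(n)}_\phi} q_n(y^{(n)}) + P_0^{(n)}(y^{(n)}\notin\mathcal D^{(n)}_\phi).
\)
The second term vanishes by assumption, so the entire theorem reduces to the deterministic claim that $h(n) := \sup_{y^{(n)}\in\mathcal D^{(n)}_\phi} q_n(y^{(n)}) \to 0$ as $n\to\infty$, for the single fixed constant $\phi=(c_1,c_2,\iota_1,\iota_2)$. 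Note that Assumptions~\ref{ass1} and \ref{ass2} enter only through Lemma~\ref{lem:subset}, which I invoke below, so no further direct use of them is needed.

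To prove $h(n)\to 0$ I would transfer the asymptotic membership statement to the finite-$n$ sets and argue by contradiction. The key observation is that an array $y^{(\infty)}$ belongs to $\mathcal D^{(\infty)}$ exactly when its finite sections satisfy $y^{(n)}\in\mathcal D^{(n)}_\phi$ for all large $n$, because the $\precsim$ bounds defining $\mathcal D^{(\infty)}$ are precisely the inequalities of $\mathcal D^{(n)}_\phi$ holding eventually with those constants. Suppose $h(n)\not\to 0$; then there are $\epsilon>0$, a subsequence $n_m\uparrow\infty$, and points $\tilde y^{(n_m)}\in\mathcal D^{(n_m)}_\phi$ with $q_{n_m}(\tilde y^{(n_m)})>\epsilon$. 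I would splice an array $y^{(\infty)}$ by setting $y^{(n_m)}=\tilde y^{(n_m)}$ along the subsequence and, for every other large $n$, selecting any element of $\mathcal D^{(n)}_\phi$; such elements exist because the hypothesis forces $P_0^{(n)}(y^{(n)}\in\mathcal D^{(n)}_\phi)\to 1$, so $\mathcal D^{(n)}_\phi$ has positive probability and hence is nonempty for all large $n$. The spliced array satisfies $y^{(n)}\in\mathcal D^{(n)}_\phi$ for all large $n$, so $y^{(\infty)}\in\mathcal D^{(\infty)}$, and Lemma~\ref{lem:subset} yields $q_n(y^{(n)})=o(1)$ along the full sequence. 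Evaluating along the subsequence gives $q_{n_m}(y^{(n_m)})=q_{n_m}(\tilde y^{(n_m)})\to 0$, contradicting $q_{n_m}(\tilde y^{(n_m)})>\epsilon$. Hence $h(n)\to 0$, and the displayed bound gives the theorem's conclusion, which by the characterization lemma preceding the theorem is exactly clustering consistency.

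The step I expect to be the main obstacle is precisely this transfer from the \emph{pointwise} guarantee of Lemma~\ref{lem:subset} --- which controls $q_n$ along each individual array in $\mathcal D^{(\infty)}$ --- to the \emph{uniform} finite-$n$ control $h(n)\to 0$ over the growing family $\mathcal D^{(n)}_\phi$. The contradiction-plus-splicing device is what bridges the two, and it relies on two structural facts worth verifying carefully: that $q_n$ depends on the data only through the finite sample $y^{(n)}$, so replacing the array along a subsequence leaves the posterior values untouched; and that fixing $\phi$ turns $\mathcal D^{(n)}_\phi$ into a genuine finite-$n$ event whose nonemptiness for large $n$ is delivered by the probabilistic hypothesis itself. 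Everything else is the bounded-convergence bookkeeping in the first display.
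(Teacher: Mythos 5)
Your proof follows the same route as the paper's: the identical decomposition of the expectation over $\mathcal D^{(n)}_\phi$ and its complement, with the complement term killed by the hypothesis $P_0^{(n)}(y^{(n)}\notin\mathcal D^{(n)}_\phi)\to 0$ and the main term reduced to Lemma~\ref{lem:subset}. The one place you go beyond the paper is the subsequence-splicing contradiction that upgrades the pointwise $o(1)$ of Lemma~\ref{lem:subset} to the uniform bound $\sup_{y^{(n)}\in\mathcal D^{(n)}_\phi}q_n(y^{(n)})\to 0$; the paper simply asserts a uniform $N_2$ at this step, so your version is, if anything, more careful, and it is valid precisely because the paper treats $y^{(n)}$ and $y^{(n+1)}$ as separate (non-nested) samples, so the splice is a legitimate array in $\mathcal D^{(\infty)}$ and $q_n$ depends only on the finite section $y^{(n)}$.
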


\begin{proof}[Proof of Theorem~\ref{thm:main}]
For simplicity, let $Z_n:=\Pi(\mathcal V_n\not\sim(V_1^{0,n},\ldots,V_{K_{0,n}}^{0,n})|y^{(n)})\in[0,1]$. We have the decomposition
\(
\mathbb E_{P^{(n)}_0}[Z_n]=\mathbb E_{P^{(n)}_0}[Z_n1_{\mathcal D^{(n)}_\phi}] + \mathbb E_{P^{(n)}_0}[Z_n1_{(\mathcal D^{(n)}_\phi)^c}],
\)
where $1_A$ denotes the indicator function of the set $A$.

For any $\epsilon_0>0$, there exists $N_1\in\mathbb N^+$ such that for any $n>N_1$,
\(
0\le\mathbb E_{P^{(n)}_0}[Z_n1_{(\mathcal D^{(n)}_\phi)^c}]\le P_0^{(n)}(y^{(n)}\not\in \mathcal D^{(n)}_\phi)<\epsilon_0/2.
\)
For the first term, invoking Lemma~\ref{lem:subset} on any sequence $y^{(\infty)}$ satisfying $y^{(n)}\in\mathcal D^{(n)}_\phi$ for $n>N_1$, there exists $N_2\in\mathbb N^+$ such that for any $n>\max(N_1,N_2)$,
\(
0\leq \mathbb E_{P^{(n)}_0}[Z_n1_{\mathcal D^{(n)}_\phi}]< \mathbb E_{P^{(n)}_0}[(\epsilon_0/2)1_{\mathcal D_\phi^{(n)}}]\le\epsilon_0/2.
\)
Combining these inequalities completes the proof.
\end{proof}

\begin{remark}
{ From a generative standpoint, conditional on the partition $\mathcal V_n$, the parameter $\delta_n$ governs the selection of the root node for each tree. Under a flat kernel, no data point is preferred as the root in constructing a directed tree within a cluster. This is part of the likelihood and is independent of the prior specification. In contrast, $\lambda_n$ enters through the prior. Since $\delta_n$ and $\lambda_n$ always appear together as a product, one may treat $\delta_n\lambda_n$ as a single hyperparameter controlling the clustering behavior of the BSF model. As $\delta_n$ becomes flatter, $\lambda_n$ may be taken larger, thereby permitting a less informative prior.}
Moreover, fixing either $\delta_n$ or $\lambda_n$ does not affect the consistency result.
For example, one can fix $r(\cdot)\equiv1$ and control $\lambda_n$ so that
\begin{itemize}
    \item ${\sup_{s\not\sim t;s,t\in[n]}f^{(n)}_{st}}/{\lambda_n}\precsim (K_{0,n}-1+\iota_1)^{-n}$ for a fixed constant $\iota_1>0$;
    \item ${\lambda_n}/{\inf_{s'\sim t';s',t'\in[n]}f^{(n)}_{s't'}}\precsim (K_{0,n}+1+\iota_2)^{-n}$ for a fixed constant $\iota_2>0$.
\end{itemize}
\end{remark}

\begin{remark}
When $K_{0,n}\equiv1$, i.e., the true number of clusters is always one, $\mathcal D^{(n)}_\phi$ takes the simpler form
\(
\mathcal D^{(n)}_\phi=\left\{y^{(n)}:\frac{\delta_n\lambda_n}{\min_{s'\sim t';s',t'\in[n]}f^{(n)}_{s't'}}\leq c_2(2+\iota_2)^{-n}\right\}.
\)
\end{remark}

{
\begin{remark}
Theorem~\ref{thm:main} is a general result. The key condition $P_0^{(n)}(y^{(n)}\not\in \mathcal D^{(n)}_\phi)\stackrel{n\to\infty}{\to}0$ requires only that the data arise from $K_{0,n}$ distinct subgroups, and does not impose independence within clusters. Hence the i.i.d.\ data-generation scheme in \eqref{oracle_data_generation} is not necessary for the validity of Theorem~\ref{thm:main}. We invoke this scheme in the next section to illustrate the main result through concrete examples.
\end{remark}}

\subsection{Gaussian-BSF}
We use the Gaussian-BSF model to elucidate the conditions on $\mathcal{D}^{(n)}_\phi$. For data lying in a $p$-dimensional Euclidean space, we define $d(y_s, y_t) = \|y_s - y_t\|_2$ and $f^{(n)}{st} = (\sqrt{2\pi},\sigma_n)^{-p} \exp\left\{ -\frac{d(y_s, y_t)^2}{2\sigma_n^2} \right\}$. More generally, when the data lie in a metric space $(\mathcal{Y}, d)$ with $\mathcal{Y} \subseteq \mathcal{M}$, where $(\mathcal{M}, g)$ is a homogeneous Riemannian manifold, one may instead employ a Riemannian Gaussian kernel. The Riemannian metric $g$ assigns to each point $x \in \mathcal{M}$ a symmetric positive-definite bilinear form $g_x : \mathcal{T}_x \mathcal{M} \times \mathcal{T}_x \mathcal{M} \to \mathbb{R}$ on the tangent space $\mathcal{T}_x \mathcal{M}$. In this setting, the conditional kernel takes the form $f^{(n)}_{st}=\zeta(\sigma_n)\exp\left\{-d_g(y_s,y_t)^2/(2\sigma_n^2)\right\}$,
where $d_g$ denotes the geodesic distance induced by $g$ and $\zeta(\sigma_n)$ is a normalizing constant that does not depend on the conditioning mean, owing to the homogeneity of $\mathcal{M}$ (see \cite{chakraborty2019statistics,said2022gaussian} for details).
For notational convenience, we write $d_{st} := d_g(y_s, y_t)$. In specific applications below, we will require the metrics $d(\cdot, \cdot)$ and $d_g(\cdot, \cdot)$ to be equivalent, as formalized in Assumption~\ref{assmp:mani}.

We emphasize that the oracle distributions $G^0_k$s' are not assumed to be Gaussian or any specific form; the results hold even when the oracle distributions are discrete.
Plugging the specific form of $f_{st}^{(n)}$ for Gaussian-BSF in the conditions of $\mathcal D^{(n)}_\phi$ and rearranging terms, we obtain
\[
\label{eq:D_set_for_Gaussian_BSF}
\mathcal D^{(n)}_\phi=\left\{y^{(n)}:\min_{s\not\sim t;s,t\in[n]}d^2_{st}\geq a_n,\max_{s'\sim t';s',t'\in[n]}d^2_{s't'}\leq b_n\right\},
\]
where
\[
\label{eq:a_n_b_n}
\begin{cases}
a_n=2\sigma_n^2\left[n\log(K_0-1+\iota_1)-\log(\delta_n\lambda_n)+\log(\zeta(\sigma_n))-\log(c_1)\right],\\
b_n=2\sigma_n^2\left[-n\log(K_0+1+\iota_2)-\log(\delta_n\lambda_n)+\log(\zeta(\sigma_n))+\log(c_2)\right].
\end{cases}
\]
For Euclidean distance, $\log(\zeta(\sigma_n))=-p\log(\sqrt{2\pi}\sigma_n)$. \cite{said2022gaussian} give expressions for $\zeta(\sigma_n)$ for a wide class of homogeneous Riemannian manifolds.

\begin{remark}[Interpretation of the condition on the oracle]
\label{rem:oraclecondi}
The conditions in \eqref{eq:D_set_for_Gaussian_BSF} admit a straightforward interpretation.
Each sample of size $n$ in $\mathcal D^{(n)}_\phi$ satisfies:
\begin{enumerate}
\item The minimum distance between any two points from different oracle clusters is bounded below by $a_n$, ensuring adequate separation between clusters.
\item The maximum distance between any two points within the same oracle cluster is bounded above by $b_n$, ensuring that within-cluster points remain in a compact region.
\end{enumerate}
These conditions need not hold for every sample of size $n$, but the probability that they hold must tend to one as $n\to\infty$, as required by Theorem~\ref{thm:main}.
\end{remark}

\begin{remark}
Some form of separation condition is necessary for clustering consistency.
{ For instance, \cite{loffler2021optimality} also highlight that the minimum cluster separation must diverge with $n$ for consistency to hold in spectral clustering.}
In that regard, our conditions are relatively mild: they would not suffice to guarantee consistency in conventional clustering models. For example, in widely used infinite mixture models, even slight model misspecification can lead the posterior to overestimate the number of clusters \citep{cai2021finite}, because splitting a tightly grouped cluster can increase the posterior probability under such models, making overpartitioning favorable.
In contrast, the BSF model resists overpartitioning through two structural properties.
First, splitting a cluster requires forming two new trees, which is penalized by the factor $\delta_n\lambda_n$.
Second, the marginal posterior in \eqref{eq:post_prob_V_n} involves the determinant $|L_{V_i} + n_i^{-1} J|$; splitting $V_i$ into $V_{i1}$ and $V_{i2}$ replaces this with the product $|L_{V_{i1}} + n_{i1}^{-1} J|\cdot|L_{V_{i2}} + n_{i2}^{-1} J|$, which is typically smaller when the points in $V_i$ are close together, making the split unfavorable.
Together, these properties ensure that the BSF model naturally resists overpartitioning under relatively mild separation conditions. A concrete illustration is provided in Remark~\ref{remark:miller_example}.
{ Furthermore, no specific functional form is imposed on the component densities, and the components $\{G^{0,n}_k\}$ need not belong to the same distributional family. Our main result does not require independence among observations within a given cluster; that assumption is invoked only later to simplify the conditions in Theorem~\ref{thm:main2}.}
\end{remark}

{ Theorem~\ref{thm:main2} below applies the general result of Theorem~\ref{thm:main} to the Gaussian-BSF model, deriving explicit, verifiable conditions in the spirit of Remark~\ref{rem:oraclecondi}.
In this theorem and thereafter, for any $k,\ell\in[K_{0}]$, we use $D_{k\ell}$ to denote the random variable representing the distance between two observations with cluster labels $k$ and $\ell$, respectively. We assume that, conditional on $(V^0_1,V^0_2,\ldots)$, the joint distribution of any pair $(y_s,y_t)$ depends only on their cluster labels $(z^*_s, z^*_t)$ and not on the specific indices $(s,t)$; this holds trivially under \eqref{oracle_data_generation}.}

\begin{theorem}[Clustering consistency under Gaussian-BSF model]
\label{thm:main2}
Suppose Assumptions~\ref{ass1} and \ref{ass2} hold. Then for $f^{(n)}_{st}=\zeta(\sigma_n)\exp\left\{-d^2_{st}/2\sigma_n^2\right\}$,
\(
\mathbb E_{P^{(n)}_0}[\Pi(\mathcal V_n\not\sim(V_1^{0,n},\ldots,V_{K_{0,n}}^{0,n})|y^{(n)})]\stackrel{n\to \infty}{\to} 0,
\)
if there exists $\phi\in\mathbb R^4_+$ such that
\[\label{cond:suf1}
\sup_{k\not=\ell;k,\ell\in[K_{0,n}]}P(D_{k\ell}^2<a_n)=o(1/n^2),
\]
and
\[\label{cond:suf2}
\sup_{k'\in[K_{0,n}]}P(D^2_{k'k'}>b_n)=o(1/n^2),
\]
where $a_n$ and $b_n$ are as in \eqref{eq:a_n_b_n}.
\end{theorem}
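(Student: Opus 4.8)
The plan is to deduce Theorem~\ref{thm:main2} directly from Theorem~\ref{thm:main} by verifying the latter's sufficient condition, namely that $P_0^{(n)}(y^{(n)}\notin\mathcal D^{(n)}_\phi)\to 0$ for the particular $\phi\in\mathbb R^4_+$ whose existence is assumed. Using the Gaussian-BSF characterization \eqref{eq:D_set_for_Gaussian_BSF}, the complement $(\mathcal D^{(n)}_\phi)^c$ is exactly the union of the two ``bad'' events $\{\min_{s\not\sim t;s,t\in[n]}d^2_{st}<a_n\}$ and $\{\max_{s'\sim t';s',t'\in[n]}d^2_{s't'}>b_n\}$, so a single union bound splits the target probability into a between-cluster term and a within-cluster term that I would control separately against the tail rates \eqref{cond:suf1} and \eqref{cond:suf2}.

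For the between-cluster term, I would apply a union bound over all pairs $(s,t)$ with $s\not\sim t$, giving $P_0^{(n)}(\min_{s\not\sim t}d^2_{st}<a_n)\le\sum_{s\not\sim t}P_0^{(n)}(d^2_{st}<a_n)$. The crucial observation is distributional: under the oracle \eqref{oracle_data_generation}, the $y_i$ are independent, so for any pair with $s\not\sim t$ the observations $y_s$ and $y_t$ are independent draws from $G^0_{z^*_s}$ and $G^0_{z^*_t}$ with $z^*_s\neq z^*_t$; hence $d_{st}$ has exactly the law of $D_{z^*_s z^*_t}$. Bounding each summand by $\sup_{k\neq\ell}P(D^2_{k\ell}<a_n)$ and noting that the number of such pairs is at most $\binom{n}{2}\le n^2/2$, condition \eqref{cond:suf1} delivers a bound of order $n^2\cdot o(1/n^2)=o(1)$. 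The within-cluster term is entirely symmetric: for $s'\sim t'$ lying in the same cluster $k'$, the distance $d_{s't'}$ has the law of $D_{k'k'}$, so the same pair-counting argument combined with \eqref{cond:suf2} yields $o(1)$.

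Adding the two bounds gives $P_0^{(n)}((\mathcal D^{(n)}_\phi)^c)=o(1)$, which is precisely the hypothesis of Theorem~\ref{thm:main}, and the desired conclusion follows. I do not anticipate a genuine obstacle, since the argument is at bottom a two-fold union bound calibrated exactly against the $o(1/n^2)$ tail rates. The only points requiring care are (i) correctly matching the law of each pairwise distance $d_{st}$ with the appropriate $D_{k\ell}$ via the independence built into the oracle, and (ii) checking that the supremum over the possibly diverging index set $[K_{0,n}]$ appearing in \eqref{cond:suf1}--\eqref{cond:suf2} is compatible with the per-pair estimate---which it is, because each pair $(s,t)$ contributes a single cluster-index pair, every such index lies among the $K_{0,n}$ clusters present in $[n]$, and the $\binom{n}{2}$ pair count is itself independent of $K_{0,n}$.
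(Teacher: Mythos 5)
Your proposal is correct and follows essentially the same route as the paper's own proof: a union bound over the at most $n^2$ between-cluster and within-cluster pairs, identification of each pairwise distance law with the corresponding $D_{k\ell}$ via the independence in the oracle, and the $o(1/n^2)$ tail conditions then give $P_0^{(n)}\bigl((\mathcal D^{(n)}_\phi)^c\bigr)=o(1)$, after which Theorem~\ref{thm:main} applies. No gaps.
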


The proof proceeds by calibrating the growth and decay rates of $a_n$ and $b_n$ under the oracle distribution so as to satisfy \eqref{cond:suf1} and \eqref{cond:suf2}. The complete proof is in the Supplementary. Section~\ref{section:examples} provides concrete examples illustrating that relatively mild conditions on these sequences are sufficient to ensure clustering consistency under the Gaussian-BSF model.

\subsection{Misclassification rate with known $K_{0,n}$}
The consistency results above do not assume $K_{0,n}$ to be known.
However, if $K_{0,n}$ is known, one can further quantify the misclassification error rate and study its large-sample properties under the same assumptions, while still allowing it to grow.
Known $K_{0,n}$ is standard in the misclassification literature \citep{loffler2021optimality,chen2024achieving}.

For two partitions $\mathcal V^1_n = (V^1_1, \ldots, V^1_{K_{0,n}})$ and $\mathcal V^2_n = (V^2_1, \ldots, V^2_{K_{0,n}})$ of $n$ nodes, let $z^1$ and $z^2$ denote the corresponding label vectors.
Let $d_H(\cdot,\cdot)$ denote the permutation-invariant Hamming distance \citep{zhang2016minimax}, defined by $d_H(\mathcal V^1_n,\mathcal V^2_n)=\min_{\psi\in\Psi}\sum_{i=1}^n\mathbf{1}\{\psi(z^1_{i})\neq z^2_{i}\}$, where $\Psi= \{\psi : \psi \text{ is a bijection from } [K_{0,n}] \text{ to } [K_{0,n}]\}$. The following lemma bounds the posterior expected misclassification rate in terms of $f^{(n)}_{st}$ and $d_{st}$.

\begin{lemma}
\label{lem:mis_rate}
Suppose $\varepsilon_n/\gamma_n:=\sup_{s\not\sim t;s,t\in[n]}f^{(n)}_{st}/\inf_{s'\sim t';s',t'\in[n]}f^{(n)}_{s't'}=o(1/n)$. Suppose that Assumption~\ref{ass1} holds and $K_{0,n}$ is known. Then
\(
\mathbb{E}(d_H(\mathcal V_n,(V_1^{0,n},\ldots,V_{K_{0,n}}^{0,n}))\mid y^{(n)}) \precsim \exp\left\{\log\left(\frac{\varepsilon_n}{\gamma_n}\right)+n\log(K_{0,n}+1)\right\}.
\)
For the Gaussian-BSF model with $f^{(n)}_{st}=\zeta(\sigma_n)\exp\left\{-d^2_{st}/2\sigma_n^2\right\}$,
\(
\mathbb{E}(d_H(\mathcal V_n,(V_1^{0,n},\ldots,V_{K_{0,n}}^{0,n}))\mid y^{(n)}) \precsim \exp\bigg\{&-\frac{\inf_{s\not\sim t;s,t\in[n]}d^2_{st}-\sup_{s'\sim t';s',t'\in[n]}d^2_{s't'}}{2\sigma_n^2}\\
&\quad+n\log(K_{0,n}+1)\bigg\}.
\)
\end{lemma}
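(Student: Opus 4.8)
The plan is to bound the posterior mean of the Hamming distance by the largest posterior odds of a competing partition against the truth, times a crude count of competitors. Since $K_{0,n}$ is known, I condition the posterior on partitions with exactly $K_{0,n}$ clusters; by \eqref{eq:post_prob_V_n} the unnormalized weight of such a partition is $w(\mathcal V_n)\propto\prod_{k=1}^{K_{0,n}}|L_{V_k}+n_k^{-1}J|$, where by Assumption~\ref{ass1} the root factors satisfy $\sum_{i\in V_k}r(y_i)\asymp n_k\delta_n$ and the common factor $(\delta_n\lambda_n)^{K_{0,n}}$ cancels in any ratio. Writing $\mathcal V^0_n=(V_1^{0,n},\ldots,V_{K_{0,n}}^{0,n})$ and $R(\mathcal V_n)=w(\mathcal V_n)/w(\mathcal V^0_n)$, and noting that the posterior denominator already contains $w(\mathcal V^0_n)$, I obtain
\(
\mathbb E\left(d_H(\mathcal V_n,\mathcal V^0_n)\mid y^{(n)}\right)\le\left(\max_{\mathcal V_n\neq\mathcal V^0_n}R(\mathcal V_n)\right)\sum_{\mathcal V_n\neq\mathcal V^0_n}d_H(\mathcal V_n,\mathcal V^0_n).
\)
The sum is handled crudely: $d_H\le n$ for every partition and there are at most $K_{0,n}^n$ label assignments, so it is at most $nK_{0,n}^n$.

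The crux is to show $\max_{\mathcal V_n\neq\mathcal V^0_n}R(\mathcal V_n)\precsim \varepsilon_n/\gamma_n$ up to a polynomial-in-$n$ factor. First I note that $\mathcal V^0_n$ is the unique partition into $K_{0,n}$ pure clusters, so any other $K_{0,n}$-cluster partition must contain a cluster mixing two oracle groups. By Kirchhoff's matrix theorem, $|L_{V_k}+n_k^{-1}J|=n_k\sum_T\prod_{(i,j)\in T}f^{(n)}_{ij}$ is a weighted spanning-tree sum over $V_k$; in a mixed cluster every spanning tree must use at least one cross-oracle edge, of weight at most $\varepsilon_n=\sup_{s\not\sim t}f^{(n)}_{st}$, whereas in a pure cluster every edge has weight at least $\gamma_n=\inf_{s'\sim t'}f^{(n)}_{s't'}$. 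I would then argue that the maximizing competitor is a single-index reassignment, moving one point $i$ from its home cluster $V_a^{0,n}$ into a foreign cluster $V_b^{0,n}$. Using the leaf-attachment (deletion–contraction) identity for the weighted tree polynomial, inserting the foreign point $i$ into $V_b^{0,n}$ multiplies its tree sum by a factor at most $C\,n_b\varepsilon_n$ (the dominant contribution attaches $i$ as a leaf through a single cross edge, higher-degree attachments carrying extra $\varepsilon_n$ factors), while deleting $i$ from $V_a^{0,n}$ divides its tree sum by a factor at least $(n_a-1)\gamma_n$ (inside its home cluster $i$ attaches through within-cluster edges $\ge\gamma_n$). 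Hence a single move gives $R\precsim n\,\varepsilon_n/\gamma_n$, and since $\varepsilon_n/\gamma_n=o(1/n)$ each additional reassignment forces a further cross edge and multiplies $R$ by another $o(1)$ factor, so single moves indeed dominate.

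Combining the pieces, $\mathbb E(d_H\mid y^{(n)})\precsim nK_{0,n}^n\cdot n\,\varepsilon_n/\gamma_n=n^2K_{0,n}^n\,\varepsilon_n/\gamma_n$. Because $(K_{0,n}+1)^n/K_{0,n}^n=(1+K_{0,n}^{-1})^n$ exceeds any fixed polynomial in $n$ in the relevant regime, the prefactor is absorbed to give $\mathbb E(d_H\mid y^{(n)})\precsim(\varepsilon_n/\gamma_n)(K_{0,n}+1)^n=\exp\{\log(\varepsilon_n/\gamma_n)+n\log(K_{0,n}+1)\}$, which is the first bound. For the Gaussian-BSF case I substitute $f^{(n)}_{st}=\zeta(\sigma_n)\exp\{-d^2_{st}/2\sigma_n^2\}$: the constant $\zeta(\sigma_n)$ cancels in the ratio, and since $e^{-d^2/2\sigma_n^2}$ is decreasing in $d^2$, one gets $\varepsilon_n/\gamma_n=\exp\{-(\inf_{s\not\sim t}d^2_{st}-\sup_{s'\sim t'}d^2_{s't'})/2\sigma_n^2\}$, reproducing the second display.

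The main obstacle is precisely the spanning-tree ratio bound of the second paragraph: quantifying how $|L_{V_k}+n_k^{-1}J|$ reacts to inserting a foreign vertex versus deleting a home vertex, and confirming rigorously that single reassignments are the worst case. The delicate point is that the hypotheses furnish no upper bound on within-cluster kernel values, so the argument must be organized as ratios in which the generic within-cluster weights cancel, leaving exactly the single forced cross edge that produces the factor $\varepsilon_n/\gamma_n$.
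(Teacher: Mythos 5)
Your outer reduction is exactly the paper's: condition on $K_{0,n}$-cluster partitions, bound $\mathbb E(d_H\mid y^{(n)})$ by $n K_{0,n}^n$ times the largest posterior odds ratio $R(\mathcal V_n)$ against the truth, and absorb the polynomial prefactors into $n\log(K_{0,n}+1)-n\log(K_{0,n})$; the Gaussian substitution at the end is also identical. The gap is in the middle step, which is the entire content of the lemma: you assert that the maximizing competitor is a single-index reassignment and that ``each additional reassignment multiplies $R$ by another $o(1)$ factor,'' but you do not prove either claim, and you yourself flag this as ``the main obstacle.'' A uniform bound over \emph{all} $K_{0,n}$-cluster competitors cannot be obtained by iterating a single-move estimate, because an arbitrary partition need not be reachable from the oracle by a chain of moves along which the ratio is monotone; one must bound $R(\mathcal V_n)$ directly for an arbitrary $\mathcal V_n$. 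The paper does this by reusing inequality \eqref{ineq:single_ratio} from the proof of Lemma~\ref{lem:subset}: the refinement decomposition $W_{ij}=V_i\cap V_j^{0}$ splits the ratio into a ``merge'' factor controlled by Lemma~\ref{lem:ratio_det1} (giving $(n\varepsilon)^{K^*-K}$ times an $O(1)$ correction via Weyl's inequality) and a ``split'' factor controlled by Lemma~\ref{lem:ratio_det2} (giving $\gamma^{-(K^*-K_0)}$); with $K=K_{0,n}$ and $K^*\ge K_{0,n}+1$ this yields $\max R\precsim n\varepsilon_n/\gamma_n$ uniformly, which is precisely what you need.

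Even your single-move sketch has an unresolved technical issue: when you insert the foreign point $i$ into $V_b^{0}$ and expand the spanning-tree polynomial over the degree $d$ of $i$, the degree-$d$ terms carry forest sums with $d$ components and $O(n_b^{d})$ attachment multiplicities; converting these back to spanning trees of $V_b^{0}$ costs $\gamma_n^{-(d-1)}$ per missing edge plus a combinatorial factor, so the resulting series is geometric in roughly $n^2\varepsilon_n/\gamma_n$, which the hypothesis $\varepsilon_n/\gamma_n=o(1/n)$ does not obviously control. The determinant/eigenvalue route of Lemmas~\ref{lem:ratio_det1} and~\ref{lem:ratio_det2} avoids this tree enumeration entirely. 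The cleanest fix is to drop the reassignment argument and cite \eqref{ineq:single_ratio} directly, as the paper does.
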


Finally, the next result bounds the expected misclassification rate. Specifically, it follows from Lemma~\ref{lem:mis_rate}. Its implications are discussed in Section~\ref{sec:rateknownK} through a concrete example.

\begin{theorem}
\label{thm:expected_misclass_rate_bound}
Suppose that Assumption~\ref{ass1} holds and $K_{0,n}$ is known. Under the Gaussian-BSF model with $f^{(n)}_{st}=\zeta(\sigma_n)\exp\left\{-d^2_{st}/2\sigma_n^2\right\}$,
\(
&\mathbb E_{P^{(n)}_0}[\mathbb{E}(d_H(\mathcal V_n,(V_1^{0,n},\ldots,V_{K_{0,n}}^{0,n}))\mid y^{(n)})]\\\precsim& \exp\left\{-\frac{a'_n-b'_n}{2\sigma_n^2}+n\log(K_{0,n}+1)\right\}\\&\quad+n^3\left[\sup_{k\not=\ell;k,\ell\in[K_{0,n}]}P(D^2_{k\ell}<a'_n)+\sup_{k'\in[K_{0,n}]}P(D^2_{k'k'}>b'_n)\right]
\)
for any positive sequences $\{a'_n\}$ and $\{b'_n\}$ satisfying $\sigma_n^2\log(n)/(a'_n-b'_n)=o(1)$.
\end{theorem}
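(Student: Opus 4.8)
The plan is to combine the conditional (posterior) bound of Lemma~\ref{lem:mis_rate} with a truncation argument over a ``nice'' data event, mirroring the decomposition used in the proof of Theorem~\ref{thm:main}. First I would introduce the event
\(
\mathcal E_n := \left\{ y^{(n)} : \inf_{s\not\sim t;\, s,t\in[n]} d^2_{st} \ge a'_n,\ \sup_{s'\sim t';\, s',t'\in[n]} d^2_{s't'} \le b'_n \right\},
\)
and abbreviate $W_n := \mathbb E\big(d_H(\mathcal V_n,(V_1^{0,n},\ldots,V_{K_{0,n}}^{0,n}))\mid y^{(n)}\big)$. Writing $\mathbb E_{P_0^{(n)}}[W_n] = \mathbb E_{P_0^{(n)}}[W_n 1_{\mathcal E_n}] + \mathbb E_{P_0^{(n)}}[W_n 1_{\mathcal E_n^c}]$, I would bound the two terms separately; the first yields the exponential separation term and the second yields the $n^3$ tail term.

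For the nice term, I would apply Lemma~\ref{lem:mis_rate} pointwise on $\mathcal E_n$. The Gaussian form $f^{(n)}_{st}=\zeta(\sigma_n)\exp\{-d^2_{st}/2\sigma_n^2\}$ gives $\varepsilon_n/\gamma_n=\exp\{-(\inf_{s\not\sim t}d^2_{st}-\sup_{s'\sim t'}d^2_{s't'})/2\sigma_n^2\}$, which on $\mathcal E_n$ is at most $\exp\{-(a'_n-b'_n)/2\sigma_n^2\}$. The hypothesis $\sigma_n^2\log n/(a'_n-b'_n)=o(1)$ is precisely what forces $(a'_n-b'_n)/(2\sigma_n^2)-\log n\to\infty$, hence $\varepsilon_n/\gamma_n=o(1/n)$ uniformly on $\mathcal E_n$, so Lemma~\ref{lem:mis_rate} is applicable there. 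Its Gaussian conclusion, together with $\inf d^2-\sup d^2\ge a'_n-b'_n$ on $\mathcal E_n$, furnishes the uniform deterministic bound $W_n 1_{\mathcal E_n}\precsim \exp\{-(a'_n-b'_n)/2\sigma_n^2+n\log(K_{0,n}+1)\}$, which survives taking the outer expectation (using $P_0^{(n)}(\mathcal E_n)\le 1$) and produces the first summand on the right-hand side.

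For the bad term, I would use the elementary bound $d_H\le n$, so $W_n\le n$ always and $\mathbb E_{P_0^{(n)}}[W_n 1_{\mathcal E_n^c}]\le n\,P_0^{(n)}(\mathcal E_n^c)$. A union bound over pairs then controls $P_0^{(n)}(\mathcal E_n^c)$: since $\mathcal E_n^c=\{\inf_{s\not\sim t}d^2_{st}<a'_n\}\cup\{\sup_{s'\sim t'}d^2_{s't'}>b'_n\}$, and each of the at most $\binom{n}{2}\precsim n^2$ inter- (respectively intra-) cluster pairs consists of independent draws with $d_{st}\stackrel{d}{=}D_{k\ell}$ for $s\in V^{0,n}_k,\ t\in V^{0,n}_\ell$, I obtain $P_0^{(n)}(\mathcal E_n^c)\precsim n^2\big[\sup_{k\neq\ell}P(D^2_{k\ell}<a'_n)+\sup_{k'}P(D^2_{k'k'}>b'_n)\big]$. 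Multiplying by the factor $n$ from $d_H\le n$ produces the $n^3$ prefactor in the second summand, and combining the two bounds completes the argument.

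The main obstacle, and the only genuinely delicate step, is the verification that the \emph{data-dependent} hypothesis $\varepsilon_n/\gamma_n=o(1/n)$ of Lemma~\ref{lem:mis_rate} holds uniformly on $\mathcal E_n$ and that the resulting posterior bound can be pulled outside the outer expectation as a deterministic quantity; this is exactly where the condition $\sigma_n^2\log n/(a'_n-b'_n)=o(1)$ is consumed. Everything else reduces to a routine union bound and the trivial estimate $d_H\le n$, and the sequences $(a'_n,b'_n)$ are deliberately left free so that one can subsequently optimize the trade-off between the exponential separation term and the oracle tail-probability term, as illustrated in Section~\ref{sec:rateknownK}.
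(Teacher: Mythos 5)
Your proposal is correct and follows essentially the same route as the paper's own proof: the same decomposition over the event $\{\inf_{s\not\sim t}d^2_{st}\ge a'_n,\ \sup_{s'\sim t'}d^2_{s't'}\le b'_n\}$, the bound $d_H\le n$ with a union bound over pairs on the complement, and Lemma~\ref{lem:mis_rate} on the good event. If anything, you are slightly more explicit than the paper in verifying that the hypothesis $\varepsilon_n/\gamma_n=o(1/n)$ of Lemma~\ref{lem:mis_rate} holds uniformly on the good event via the condition $\sigma_n^2\log(n)/(a'_n-b'_n)=o(1)$, which is a welcome clarification rather than a deviation.
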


\section{Concrete examples}\label{section:examples}

We first present examples illustrating the general clustering consistency result of the previous section in Section~\ref{sec:unknownK0}, allowing the true number of clusters $K_{0,n}$ to be unknown and growing. We then quantify the misclassification rate under known $K_{0,n}$ in Section~\ref{sec:rateknownK}, enabling comparison with the existing literature.

\subsection{Clustering consistency with unknown $K_{0,n}$}
\label{sec:unknownK0}

The first assumes that the oracle distributions $G^0_{k}$ are Gaussian; the second generalizes to object-valued distributions supported on a metric space satisfying Assumption~\ref{assmp:mani}. Both examples apply Theorem~\ref{thm:main2}.
In the first case, $G^0_k$s' are distinct Gaussian distributions. Here $\zeta(\sigma_n)=(\sqrt{2\pi}\sigma_n)^{-p}$, and the two probabilities in \eqref{cond:suf1} and \eqref{cond:suf2} can be bounded directly using properties of Gaussian and chi-squared distributions. The following lemma is proved in \cite{ghosh2021tailbounds}, Theorem~1.

\begin{lemma}
\label{lem:tailbound}
Suppose $X\sim\chi^2_p$. Then for $a>p$,
\(
P(X>a)\leq\exp\left\{-\frac{p}{2}\left[\frac{a}{p}-1-\log\!\left(\frac{a}{p}\right)\right]\right\}.
\)
\end{lemma}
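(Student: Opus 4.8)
The plan is to apply the standard Chernoff bounding technique based on the moment generating function (MGF) of the chi-squared distribution, followed by an explicit optimization over the free parameter. First I would recall that for $X \sim \chi^2_p$ the MGF is $\mathbb{E}[e^{tX}] = (1-2t)^{-p/2}$, valid for all $t < 1/2$. Applying Markov's inequality to the nonnegative random variable $e^{tX}$ for any $t \in (0, 1/2)$ then gives $P(X > a) = P(e^{tX} > e^{ta}) \le e^{-ta}(1-2t)^{-p/2}$, which holds for every admissible $t$ and is therefore ripe for optimization.

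The next step is to optimize the right-hand side over $t \in (0, 1/2)$, equivalently to minimize the exponent $g(t) = -ta - \tfrac{p}{2}\log(1-2t)$. Differentiating yields $g'(t) = -a + p/(1-2t)$, so the stationary point satisfies $1 - 2t = p/a$, i.e. $t^\star = \tfrac{1}{2}(1 - p/a)$. This is precisely where the hypothesis $a > p$ enters: it guarantees $t^\star \in (0, 1/2)$, so the unconstrained optimizer lies in the admissible range and the resulting bound is nontrivial. Convexity of $g$, which follows from $g''(t) = 2p/(1-2t)^2 > 0$, confirms that $t^\star$ is indeed the global minimizer on this interval.

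Finally I would substitute $t^\star$ back into the bound. Using the identity $1 - 2t^\star = p/a$, the two terms become $-t^\star a = -\tfrac{1}{2}(a - p)$ and $-\tfrac{p}{2}\log(1 - 2t^\star) = \tfrac{p}{2}\log(a/p)$, so that $P(X > a) \le \exp\{\tfrac{p}{2}\log(a/p) - \tfrac{1}{2}(a-p)\}$. A short algebraic rearrangement shows that this exponent equals $-\tfrac{p}{2}[a/p - 1 - \log(a/p)]$, which is exactly the claimed bound.

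The computation is entirely elementary, so I do not anticipate any serious obstacle; the only point demanding genuine care is verifying that the stationary point $t^\star$ falls inside the interval $(0, 1/2)$, which is precisely the role played by the assumption $a > p$ (and which also explains why the stated inequality is restricted to that regime). As an alternative, one could simply invoke this as a special case of the classical Laurent--Massart concentration inequalities for chi-squared variables, but the self-contained Chernoff argument above is cleaner for the present purpose.
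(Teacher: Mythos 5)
Your proof is correct. The paper does not actually prove this lemma; it simply cites it as Theorem~1 of an external reference (Ghosh, 2021), so there is no internal argument to compare against. Your self-contained derivation is the standard Chernoff/Cram\'er bound for the chi-squared distribution: the MGF $(1-2t)^{-p/2}$ is correct for $t<1/2$, the optimizer $t^\star=\tfrac{1}{2}(1-p/a)$ lies in $(0,1/2)$ exactly when $a>p$, and the substitution $-t^\star a - \tfrac{p}{2}\log(1-2t^\star) = -\tfrac{1}{2}(a-p)+\tfrac{p}{2}\log(a/p) = -\tfrac{p}{2}\bigl[\tfrac{a}{p}-1-\log(\tfrac{a}{p})\bigr]$ checks out. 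What your approach buys is self-containedness: the reader need not consult the cited paper, and the role of the hypothesis $a>p$ (ensuring the stationary point is admissible) is made explicit. The only thing the citation buys the authors is brevity.
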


We now state the clustering consistency result for the Gaussian-BSF model. The ambient dimension, $p_n$, is allowed to grow slowly with $n$, and parameters associated with the oracle distributions may also vary with $n$, though we suppress this dependence in the notation for brevity.

\begin{theorem}[Consistency when oracle distributions are Gaussian]
\label{thm:GMM_consistency_gen}
Suppose $(V^{0,\infty}_1,V^{0,\infty}_2,\ldots)$ is the oracle clustering for $y^{(\infty)}$, and $f^{(n)}_{st}=(\sqrt{2\pi}\sigma_n)^{-p_n}\exp\left\{-\|y_s-y_t\|_2^2/(2\sigma_n^2)\right\}$.
For each $n$, suppose $y_i\stackrel{\mathrm{indep}}{\sim}\mathrm{N}(\mu_{k},\Sigma_{k})$ if $y_{i}\in V^{0,\infty}_k$.
Set $\rho_n:=(\delta_n\lambda_n\sigma_n^{p_n})^{-1}$, $\Lambda_{\max}:=\max_{k\in[K_{0,n}]}\lambda_{\max}(\Sigma_k)$, and $D_{\mu,{\min}}:=\min_{k,\ell\in[K_{0,n}],k\not=\ell}\|\mu_k-\mu_\ell\|_2$.
Assume that
\begin{enumerate}
\item[(i)] Assumptions~\ref{ass1} and \ref{ass2} hold;
\item[(ii)] $\rho_n\succsim (K_{0,n}+1+\iota)^{n}$ for a fixed constant $\iota>0$;
\item[(iii)] $\sigma_n^2\log(\rho_n)/D^2_{\mu,\min}=o(1)$;
\item[(iv)] $\Lambda_{\max}(p_n\vee\log(n))/[\sigma_n^2\log(\rho_n)]=o(1)$.
\end{enumerate}
Then
\(\mathbb E_{P^{(n)}_0}[\Pi(\mathcal V_n\not\sim(V_1^{0,n},\ldots,V_{K_{0,n}}^{0,n})|y^{(n)})]\stackrel{n\to \infty}{\to} 0.\)
\end{theorem}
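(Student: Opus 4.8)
The plan is to obtain this result as a corollary of Theorem~\ref{thm:main2} by verifying its two sufficient conditions \eqref{cond:suf1} and \eqref{cond:suf2}. Everything then reduces to a lower-tail estimate for the between-cluster squared distance $D_{k\ell}^2$ and an upper-tail estimate for the within-cluster squared distance $D_{k'k'}^2$. The key simplification is that under $y_i \sim \Normal(\mu_k,\Sigma_k)$, both $D_{k\ell}^2$ and $D_{k'k'}^2$ are Gaussian quadratic forms, so standard $\chi^2$ concentration (Lemma~\ref{lem:tailbound}) can be brought to bear after controlling the covariance by its largest eigenvalue.

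First I would make the thresholds explicit. Substituting $\zeta(\sigma_n)=(\sqrt{2\pi}\sigma_n)^{-p_n}$ and $\rho_n=(\delta_n\lambda_n\sigma_n^{p_n})^{-1}$ into \eqref{eq:a_n_b_n}, the $p_n\log\sigma_n$ contributions cancel and leave $a_n/(2\sigma_n^2)=\log\rho_n+n\log(K_0-1+\iota_1)-p_n\log\sqrt{2\pi}-\log c_1$ and $b_n/(2\sigma_n^2)=\log\rho_n-n\log(K_0+1+\iota_2)-p_n\log\sqrt{2\pi}+\log c_2$. Choosing the free constants $\iota_1,\iota_2$ small (say below $\iota$) and $c_1,c_2$ appropriately, Assumption (ii), $\rho_n\succsim(K_{0,n}+1+\iota)^n$, forces the $\log\rho_n$ term to dominate the subtracted $n\log(\cdot)$ term, so that $a_n,b_n>0$ for large $n$ and both are, up to lower-order terms, of order $\sigma_n^2\log\rho_n$. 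This identifies $\sigma_n^2\log\rho_n$ as the operative separation/concentration scale.

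Next I would reduce each quadratic-form tail to a single isotropic $\chi^2_{p_n}$. For the within-cluster term, $y_s-y_t\sim\Normal(0,2\Sigma_{k'})$, so factoring $\Sigma_{k'}^{1/2}$ yields the pathwise bound $D_{k'k'}^2\le 2\lambda_{\max}(\Sigma_{k'})\,\|Z\|_2^2\le 2\Lambda_{\max}\chi^2_{p_n}$, whence $P(D_{k'k'}^2>b_n)\le P(\chi^2_{p_n}>b_n/(2\Lambda_{\max}))$. For the between-cluster term I would write $y_s-y_t=(\mu_k-\mu_\ell)+W$ with $W\sim\Normal(0,\Sigma_k+\Sigma_\ell)$ and apply the reverse triangle inequality, so that $\{D_{k\ell}^2<a_n\}\subseteq\{\|W\|_2>D_{\mu,\min}-\sqrt{a_n}\}$; Assumption (iii) guarantees $\sqrt{a_n}=o(D_{\mu,\min})$, so the threshold is $\asymp D_{\mu,\min}$, and the same eigenvalue bound ($\lambda_{\max}(\Sigma_k+\Sigma_\ell)\le 2\Lambda_{\max}$) gives $P(D_{k\ell}^2<a_n)\le P(\chi^2_{p_n}>(D_{\mu,\min}-\sqrt{a_n})^2/(2\Lambda_{\max}))$. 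Because $\Lambda_{\max}$ and $D_{\mu,\min}$ are extremal over all clusters, these bounds are uniform in $k,\ell,k'$, so the suprema in \eqref{cond:suf1}–\eqref{cond:suf2} are handled automatically, and the applicability requirement $a>p_n$ in Lemma~\ref{lem:tailbound} holds since both thresholds exceed $p_n$ by Assumption (iv).

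Finally I would close both conditions with Lemma~\ref{lem:tailbound}: the resulting exponents are of order $b_n/\Lambda_{\max}$ within clusters and $D_{\mu,\min}^2/\Lambda_{\max}$ between clusters, and Assumption (iv) forces the former to exceed $\log n$ while Assumptions (iii) and (iv) together force the latter ($D_{\mu,\min}^2\gg\sigma_n^2\log\rho_n\gg\Lambda_{\max}\log n$), so each tail is $o(1/n^2)$; invoking Theorem~\ref{thm:main2} then gives the conclusion. I expect the main obstacle to be the between-cluster step, since $D_{k\ell}^2$ is a noncentral quadratic form and we need a deviation \emph{below} its mean (of order $D_{\mu,\min}^2$); the reverse-triangle reduction to an isotropic $\chi^2$ tail is what makes it tractable. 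The genuinely delicate part is the bookkeeping that the single scale $\sigma_n^2\log\rho_n$ simultaneously lies far below $D_{\mu,\min}^2$ (separation, needed for \eqref{cond:suf1}) and far above $\Lambda_{\max}(p_n\vee\log n)$ (concentration, needed for \eqref{cond:suf2}), uniformly as $K_{0,n}$ and $p_n$ diverge.
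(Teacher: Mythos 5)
Your proposal is correct and follows essentially the same route as the paper's proof: reduce to Theorem~\ref{thm:main2}, show $a_n\asymp b_n\asymp\sigma_n^2\log\rho_n$ under (ii), use the reverse-triangle/recentering argument plus the eigenvalue bound $\lambda_{\max}(\Sigma_k+\Sigma_\ell)\le 2\Lambda_{\max}$ to reduce both tails to a central $\chi^2_{p_n}$, and close with Lemma~\ref{lem:tailbound} under (iii)--(iv). The only cosmetic difference is that the paper establishes the slightly stronger rate $\precsim 1/n^3$ rather than just $o(1/n^2)$.
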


Here $\lambda_{\max}(\Sigma_k)$ denotes the largest eigenvalue of $\Sigma_k$.
To illustrate the assumptions on the oracle, we present the following corollary. Define the signal-to-noise ratio as $\mathrm{SNR}:=D_{\mu,\min}/\sqrt{\Lambda_{\max}}$.

\begin{corollary}
\label{cor:GMM_consistency_gen}
Suppose Assumptions~\ref{ass1} and \ref{ass2} hold. If $\mathrm{SNR}/\sqrt{p_n\vee\log(n)}\to\infty$ as $n\to\infty$, then there exists $(\delta_n\lambda_n,\sigma_n^2)$ yielding clustering consistency under the Gaussian-BSF model.
\end{corollary}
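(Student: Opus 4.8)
The plan is to deduce the corollary directly from Theorem~\ref{thm:GMM_consistency_gen} by exhibiting a choice of the tuning sequences $(\delta_n\lambda_n,\sigma_n^2)$ that verifies its hypotheses (ii)--(iv); hypothesis (i) is assumed. The crucial structural observation is that the only quantities we are free to tune are $\sigma_n^2$ and $\rho_n=(\delta_n\lambda_n\sigma_n^{p_n})^{-1}$, and once these are fixed the product $\delta_n\lambda_n=\rho_n^{-1}\sigma_n^{-p_n}$ is determined. This product can always be realized legitimately, e.g.\ by taking a flat root kernel $\delta_n\equiv1$ (which satisfies Assumption~\ref{ass1} trivially) and absorbing the remainder into $\lambda_n=\rho_n^{-1}\sigma_n^{-p_n}>0$. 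Thus it suffices to select positive sequences $\sigma_n^2$ and $\rho_n$ meeting (ii)--(iv).

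Next I would reduce the three conditions to a single feasibility statement. Conditions (iii) and (iv) together require the quantity $\sigma_n^2\log(\rho_n)$ to be sandwiched,
\(
\Lambda_{\max}(p_n\vee\log n)=o\big(\sigma_n^2\log(\rho_n)\big)\quad\text{and}\quad \sigma_n^2\log(\rho_n)=o\big(D^2_{\mu,\min}\big).
\)
Such a sandwiched sequence exists if and only if $\Lambda_{\max}(p_n\vee\log n)=o(D^2_{\mu,\min})$, and this last relation is exactly the hypothesis of the corollary: since $\text{SNR}^2=D^2_{\mu,\min}/\Lambda_{\max}$, one has $D^2_{\mu,\min}/[\Lambda_{\max}(p_n\vee\log n)]=\text{SNR}^2/(p_n\vee\log n)\to\infty$ precisely when $\text{SNR}/\sqrt{p_n\vee\log n}\to\infty$.

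Concretely, I would set the middle quantity to the geometric mean, $m_n:=\sqrt{\Lambda_{\max}(p_n\vee\log n)\,D^2_{\mu,\min}}$, then choose $\log(\rho_n):=n\log(K_{0,n}+1+\iota)$ for any fixed $\iota>0$ (say $\iota=1$), which forces $\rho_n=(K_{0,n}+1+\iota)^n$ and hence condition (ii), and $\sigma_n^2:=m_n/\log(\rho_n)$, so that $\sigma_n^2\log(\rho_n)=m_n$. With this choice both ratios controlling (iii) and (iv) collapse to the same expression,
\(
\frac{\sigma_n^2\log(\rho_n)}{D^2_{\mu,\min}}=\frac{\Lambda_{\max}(p_n\vee\log n)}{\sigma_n^2\log(\rho_n)}=\sqrt{\frac{\Lambda_{\max}(p_n\vee\log n)}{D^2_{\mu,\min}}}=\frac{\sqrt{p_n\vee\log n}}{\text{SNR}}\longrightarrow 0,
\)
so (iii) and (iv) both hold. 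Theorem~\ref{thm:GMM_consistency_gen} then delivers the claimed consistency for this $(\delta_n\lambda_n,\sigma_n^2)$.

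The argument is essentially a one-parameter feasibility check, so I do not anticipate a genuine obstacle; the one step deserving care is the reduction recognizing that (iii) and (iv) are \emph{jointly} equivalent to the single signal gap $\Lambda_{\max}(p_n\vee\log n)=o(D^2_{\mu,\min})$, after which the geometric-mean construction balances the two one-sided requirements automatically. A secondary bookkeeping item is to confirm that the constructed $(\delta_n\lambda_n,\sigma_n^2)$ are positive and compatible with Assumption~\ref{ass1}, and that the fixed constant $\iota$ entering condition (ii) may be chosen arbitrarily.
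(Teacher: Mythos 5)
Your proposal is correct and follows essentially the same route as the paper: the paper interpolates with $\sigma_n^2\log(\rho_n)\asymp (D^2_{\mu,\min})^{\alpha}(\Lambda_{\max}(p_n\vee\log n))^{1-\alpha}$ for arbitrary $\alpha\in(0,1)$ and sets $\rho_n=(K_{0,n}+1+\iota)^{n}$, and your geometric-mean construction is exactly the case $\alpha=1/2$. The feasibility reduction and the verification of conditions (ii)--(iv) match the paper's argument.
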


\begin{proof}[Proof of Corollary~\ref{cor:GMM_consistency_gen}]
Taking $-\sigma_n^2\log(\delta_n\lambda_n\sigma_n^{p_n})\asymp(D^2_{\mu,\min})^\alpha(\Lambda_{\max}(p_n\vee\log(n)))^{1-\alpha}$ for any $\alpha\in(0,1)$, one can verify that conditions~(iii) and (iv) of Theorem~\ref{thm:GMM_consistency_gen} are satisfied. Condition~(ii) then holds for many choices of $\delta_n\lambda_n$ and $\sigma_n$ consistent with the growth and decay rates of $D_{\mu,\min}$ and $\Lambda_{\max}$. One specific choice is
\(
&\sigma_n^2=\left[\mathrm{SNR}/\sqrt{p_n\vee\log(n)}\right]^{2\alpha}\cdot\Lambda_{\max}\log(n)/[n\log(K_{0,n}+1+\iota)],\\
&\delta_n\lambda_n=(K_{0,n}+1+\iota)^{-n}\sigma_n^{-p_n}.
\)
\end{proof}

Condition~(ii) of Theorem~\ref{thm:GMM_consistency_gen} quantifies the required rate for the product $\delta_n\lambda_n\sigma_n^{p_n}$. Given conditions~(iii) and (iv), a wide range of choices for $(\delta_n\lambda_n,\sigma_n)$ is available, depending on the minimum mean separation $D_{\mu,\min}$, the maximum spread $\Lambda_{\max}$, and the pair $(p_n,K_{0,n})$. To illustrate these conditions concretely via Corollary~\ref{cor:GMM_consistency_gen}, consider the special case $D^2_{\mu,\min}\asymp n^d$, $\Lambda_{\max}\asymp n^h$, $p_n\asymp \log(n)$, and $K_{0,n}\asymp n^{\kappa}$ with $\kappa<\sfrac{1}{2}$, so that Assumption~\ref{ass2} holds.
Corollary~\ref{cor:GMM_consistency_gen} requires $n^{d-h}/\log(n)\to\infty$, i.e., $d>h$.
One may then take $\sigma_n^2\asymp n^{\beta-1}$ for $\beta\in(h,d)$ and $\delta_n\lambda_n\asymp n^{-n}$ to achieve clustering consistency. We discuss three cases depending on the values of $d$ and $h$.

\medskip
\noindent\textit{Case 1} ($h\ge1$): Both the minimum separation and the maximum spread increase rapidly with $n$. To accommodate the increasing within-cluster spread, $\sigma_n^2$ must also grow with $n$; otherwise the BSF model may incorrectly split a single cluster into multiple subclusters.

\medskip
\noindent\textit{Case 2} ($d>1$ and $h<1$): The minimum separation grows with $n$, while the maximum spread grows slowly or decreases. In this regime $\sigma_n^2$ has greater flexibility, with $\beta-1\in(h-1,d-1)$; accordingly $\sigma_n^2$ may increase, remain constant, or decrease with $n$.

\medskip
\noindent\textit{Case 3} ($d\le1$): Both the minimum separation and maximum spread grow slowly or shrink with $n$. Clusters may not be sufficiently separated, risking erroneous merging of distinct clusters. To prevent underpartitioning, $\sigma_n^2$ must decrease with $n$, making the BSF model more sensitive to small separations.
\medskip

The above discussion provides insight into the roles of $D_{\mu,\min}$ and $\Lambda_{\max}$ as the data dimension $p_n$ and the number of clusters $K_{0,n}$ grow. { Without additional structural conditions, our clustering consistency result accommodates data dimensions growing moderately with the sample size, as ensured by condition~(iv) of Theorem~\ref{thm:GMM_consistency_gen}.}

\begin{remark}\label{remark:miller_example}
\cite{miller2013simple} presents a toy example showing that Dirichlet process mixtures (DPMs) with fixed hyperparameters can fail to recover the true number of clusters asymptotically: the posterior may favor multiple clusters even when the data arise from a single one. { This inconsistency was later addressed by \cite{Ascolani_2022}, who showed in Proposition~2 of their paper that degeneracy at zero in the posterior of the concentration parameter $\alpha$ is a necessary condition for clustering consistency, and who resolved the issue by placing a prior on $\alpha$. Their approach, however, requires stronger controls on the true component densities than those needed here.} Consider the same toy example from \cite{miller2013simple}, where all observations are i.i.d.\ from $\mathrm{N}(0,1)$. Applying Theorem~\ref{thm:GMM_consistency_gen}, clustering consistency follows under the specification
\(
r(\cdot) \equiv 1, \quad \sigma_n^2 \equiv 1, \quad \lambda_n \asymp 3^{-n}.
\)
\end{remark}

We next generalize to settings where the data support is non-Euclidean. The robustness of the BSF model permits this extension to object-valued distributions supported on a metric space $(\mathcal Y, d)$ under the following assumption.

\begin{assumption}
\label{assmp:mani}
Corresponding to the metric space $(\mathcal Y, d)$, there exists a Riemannian manifold $(M,g)$ such that $\mathcal Y \subseteq M$ and the geodesic distance $d_g$ induced by $g$ satisfies $cd(x,y)\leq d_g(x,y)\leq C d(x,y)$ for some fixed constants $c,C>0$ and all $x,y\in \mathcal Y$.
\end{assumption}

For example, the space of $m\times m$ symmetric positive definite (SPD) matrices forms a Riemannian manifold with the Rao--Fisher metric $d_g(P_1, P_2)=\|\log(P_1^{-1/2}P_2P_1^{-1/2})\|_F$. 
The space of unweighted graphs with $m$ nodes forms a discrete metric space; a natural distance between graphs $O_1$ and $O_2$ is $d(O_1,O_2)=\|\log(\widetilde{L}_1^{-1/2}\widetilde{L}_2\widetilde{L}_1^{-1/2})\|_F$, where $\widetilde{L}_1$ and $\widetilde{L}_2$ are the nearest SPD matrices to the graph Laplacians $L_1$ and $L_2$, respectively. One may use the algorithm of \cite{cheng1998modified} to compute the nearest SPD matrices, or alternatively set $\widetilde{L}_k=L_k+\eta I_m$ for a fixed small $\eta>0$. Another widely used distance for graph-valued data is $\|L_1-L_2\|_F^2$, in which case the data can be embedded in the $m(m+1)/2$-dimensional Euclidean space with its standard topology. We use the Gaussian-BSF model with Riemannian kernel $f^{(n)}_{st}=\zeta(\sigma_n)\exp\left\{-d^2_g(y_s,y_t)/(2\sigma_n^2)\right\}$.

\begin{theorem}[Consistency when oracle distributions are object-valued]
\label{thm:General_Mixture_Consistency}
Let $(V^{0,\infty}_1,V^{0,\infty}_2,\ldots)$ be the oracle clustering for $y^{(\infty)}$.
For each $n$, suppose $y_i\stackrel{\mathrm{indep}}{\sim}G^0_{k}$ if $y_{i}\in V^{0,\infty}_k$.
Set $\rho_n:=(\delta_n\lambda_n\zeta(\sigma_n))^{-1}$ and $f^{(n)}_{st}=\zeta(\sigma_n)\exp\{-d^2_g(y_s,y_t)/(2\sigma_n^2)\}$. Assume that
\begin{enumerate}
\item[(i)] $\mu_k:=\arg\min_z\mathbb{E}_{x\sim G_k^0} d^2(z,x)$ is the unique Fr\'echet mean under $G_k^0$, and \\$D_{\mu,\min}:=\min_{k,\ell\in[K_0],k\not=\ell}d(\mu_k,\mu_\ell)$;
\item[(ii)] Assumptions~\ref{ass1} and \ref{ass2} hold;
\item[(iii)] $\rho_n\succsim(K_{0,n}+1+\iota)^{n}$ for a fixed constant $\iota>0$;
\item[(iv)] $P_{G^0_k}(d(X,\mu_k)>R)\leq\exp\left(-CR^{\nu_n}\right)$ for a fixed constant $C$, any $k\in[K_{0,n}]$ and $R\geq0$, and a sequence $\nu_n$ satisfying $\nu:=\liminf_{n\to\infty}\nu_n>0$;
\item[(v)] $(\log(n))^{2/\nu}/[\sigma_n^2\log(\rho_n)]=o(1)$;
\item[(vi)] $\sigma_n^2\log(\rho_n)/D^2_{\mu,\min}=o(1)$.
\end{enumerate}
Then
\(\mathbb E_{P^{(n)}_0}[\Pi(\mathcal V_n\not\sim(V_1^{0,n},\ldots,V_{K_{0,n}}^{0,n})|y^{(n)})]\stackrel{n\to \infty}{\to} 0.\)
\end{theorem}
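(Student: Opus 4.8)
The plan is to reduce the entire statement to Theorem~\ref{thm:main2} and then verify its two sufficient conditions \eqref{cond:suf1} and \eqref{cond:suf2} by purely geometric arguments. First I would fix the free constants $\phi=(c_1,c_2,\iota_1,\iota_2)$ and read off $a_n,b_n$ from \eqref{eq:a_n_b_n}, now with $d_{st}=d_g(y_s,y_t)$. Since the oracle conditions (i) and (iv) are phrased in the base metric $d$ while the kernel uses the geodesic metric $d_g$, the first move is to pass between the two via Assumption~\ref{assmp:mani}: the sandwich $c\,d\le d_g\le C\,d$ lets me translate any statement about $D_{k\ell}$ (a $d_g$-distance between two independent draws) into a statement about $d$-distances, at the cost of the fixed constants $c,C$. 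Substituting $\rho_n=(\delta_n\lambda_n\zeta(\sigma_n))^{-1}$ for the combination $-\log(\delta_n\lambda_n)+\log\zeta(\sigma_n)$ appearing in \eqref{eq:a_n_b_n}, I would show $a_n\asymp\sigma_n^2\log\rho_n$, with the $n\log(K_{0,n}-1+\iota_1)$ and $\log\zeta(\sigma_n)$ contributions absorbed into $\log\rho_n$ by condition (iii).

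For the separation condition \eqref{cond:suf1}, condition (vi) gives $\sigma_n^2\log\rho_n=o(D^2_{\mu,\min})$, hence $\sqrt{a_n}=o(D_{\mu,\min})$. For $X\sim G^0_k$ and $Y\sim G^0_\ell$ with $k\neq\ell$, the triangle inequality in $d$ gives $d(X,Y)\ge d(\mu_k,\mu_\ell)-d(X,\mu_k)-d(Y,\mu_\ell)\ge D_{\mu,\min}-d(X,\mu_k)-d(Y,\mu_\ell)$. Translating $\{D_{k\ell}^2<a_n\}$ into $\{d(X,Y)<\sqrt{a_n}/c\}$ and using $\sqrt{a_n}/c=o(D_{\mu,\min})$, this event forces $d(X,\mu_k)+d(Y,\mu_\ell)>D_{\mu,\min}/2$ for large $n$; a union bound together with the sub-Weibull tail (iv) then yields $P(D_{k\ell}^2<a_n)\le 2\exp\{-C(D_{\mu,\min}/4)^{\nu_n}\}$. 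Combining (v) and (vi) forces $D_{\mu,\min}$ to grow faster than a power of $\log n$, which is exactly what makes this bound $o(1/n^2)$ uniformly in $k,\ell$.

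For the compactness condition \eqref{cond:suf2}, I would instead use $d_g(X,Y)\le C\,d(X,Y)\le C\{d(X,\mu_{k'})+d(Y,\mu_{k'})\}$ for $X,Y\sim G^0_{k'}$, so that $\{D_{k'k'}^2>b_n\}$ forces $d(X,\mu_{k'})+d(Y,\mu_{k'})>\sqrt{b_n}/C$; a union bound and (iv) give $P(D_{k'k'}^2>b_n)\le 2\exp\{-C'(\sqrt{b_n}/(2C))^{\nu_n}\}$, which is $o(1/n^2)$ once $\sqrt{b_n}$ exceeds a suitable power of $\log n$. This lower bound on $b_n$ is where (iii) and (v) jointly enter: (iii) ensures the negative correction $-n\log(K_{0,n}+1+\iota_2)$ in $b_n$ is dominated by $\log\rho_n$, so that $b_n\asymp\sigma_n^2\log\rho_n$, and then (v), namely $\sigma_n^2\log\rho_n\succ(\log n)^{2/\nu}$, supplies $b_n\succ(\log n)^{2/\nu}$. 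With both \eqref{cond:suf1} and \eqref{cond:suf2} established, Theorem~\ref{thm:main2} delivers the conclusion. I expect the main obstacle to be precisely this simultaneous calibration: unlike the Gaussian case, there is no closed-form law for $D_{k\ell}$, so every estimate must be driven through the triangle inequality and the single uniform tail (iv). The delicate part is choosing $a_n$ and $b_n$ (through $\phi$ and the master scale $\sigma_n^2\log\rho_n$) small enough to lie below the separation $D^2_{\mu,\min}$ yet large enough to dominate the within-cluster spread at the sub-Weibull rate $\nu_n$, uniformly over the growing number of clusters $K_{0,n}$, while carrying the equivalence constants $c,C$ of Assumption~\ref{assmp:mani} and the manifold normalizer $\zeta(\sigma_n)$ through the exponent bookkeeping.
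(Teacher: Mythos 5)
Your proposal is correct and follows essentially the same route as the paper's proof: fix $\phi$, show $a_n\asymp b_n\asymp\sigma_n^2\log\rho_n$ via condition (iii), then verify \eqref{cond:suf1} and \eqref{cond:suf2} by the triangle inequality about the Fr\'echet means combined with the sub-Weibull tail (iv), using (v) and (vi) to ensure $D_{\mu,\min}^{\nu}$ and $b_n^{\nu/2}$ both dominate $\log n$, and finally invoke Theorem~\ref{thm:main2}. Your explicit tracking of the equivalence constants $c,C$ between $d$ and $d_g$ is a minor point of added care that the paper's proof elides, but it does not change the argument.
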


Assumption~(i) is a prerequisite for Assumption~(vi). While there may exist multiple minimizers of $\mathbb{E}_{x\sim G_k^0}d^2(z,x)$ in general, the Fr\'echet mean is unique in a wide range of spaces. For instance, in Hadamard spaces, uniqueness is guaranteed \citep{sturm2003probability}; for complete Riemannian manifolds, existence and uniqueness are discussed in detail by \cite{afsari2011riemannian}. Assumptions~(ii), (iii), and (vi) mirror those in Theorem~\ref{thm:GMM_consistency_gen}, while Assumption~(iv) captures the role of the oracle distribution's spread through a tail probability bound. By an argument analogous to the discussion following Theorem~\ref{thm:GMM_consistency_gen}, under mild conditions, if $D_{\mu,\min}/(\log(n))^{1/\nu}\to\infty$, then there exists $(\delta_n\lambda_n,\sigma^2_n)$ yielding clustering consistency.

\begin{remark}
Assumption~(iv) controls the tail behavior of $G_k^0$. When $\nu_n\equiv\nu$ is constant, setting $\nu=2$ makes $G^0_k$ sub-Gaussian and $\nu=1$ makes it sub-exponential. More generally, $\nu$ may be any arbitrarily small positive constant, accommodating distributions with relatively heavy tails while still ensuring clustering consistency.
\end{remark}

In general, the normalizing constant $\zeta(\sigma_n)$ is not tractable for arbitrary $d$, so quantifying $\delta_n\lambda_n$ from $\rho_n$ alone may not be possible. Nevertheless, for the broad class of examples in \cite{said2022gaussian}, $\log(\zeta(\sigma_n))=o(n)$ holds when $\sigma_n\asymp n^{-\beta'}$ with $\beta'>0$. In that case one may take $\delta_n\lambda_n\asymp(K_{0,n}+1+\iota)^{-n}$, since the growth rate of $\log(\delta_n\lambda_n)$ dominates that of $\log(\zeta(\sigma_n))$.

\subsection{Expected misclassification rate with known $K_{0,n}$}
\label{sec:rateknownK}

We present results on the expected misclassification rate under the assumption that $K_{0,n}$ is known and, for simplicity, that the oracle distributions $G^0_k$ are Gaussian. When $K_{0,n}$ is known, the hyperparameters $\delta_n$ and $\lambda_n$ become irrelevant, as their primary role is to control the estimated number of clusters. Mathematically, $\delta_n\lambda_n$ cancels in the ratio $\sfrac{\Pi(\mathcal V_n\not\sim (V^{0,n}_{1},\ldots,V^{0,n}_{K_{0,n}}),|\mathcal V_n|=K_{0,n}\mid y^{(n)})}{\Pi(\mathcal V_n\sim (V^{0,n}_{1},\ldots,V^{0,n}_{K_{0,n}})\mid y^{(n)})}$, and the parameter $\sigma_n^2$ in $f_{st}^{(n)}$ directly governs how partitions are formed.

Corollary~\ref{cor:GMM_consistency_gen} guarantees clustering consistency under appropriate choices of $(\delta_n\lambda_n,\sigma_n^2)$ when $\mathrm{SNR}/\sqrt{p_n\vee\log(n)}\to\infty$. However, selecting such hyperparameters typically requires knowledge of the oracle to avoid both overpartitioning and underpartitioning. Here we establish a stronger result: under the same SNR condition, the expected misclassification rate decays exponentially in $\mathrm{SNR}^2$, provided $\sigma_n^2$ decays sufficiently fast. { We formally state this as Theorem~\ref{thm:mis_class_for_Gaussian} below. A supporting simulation study, confirming that numerical results align with the theorem, is presented in Section~S3.}

\begin{theorem}
\label{thm:mis_class_for_Gaussian}
Suppose $(V^{0,\infty}_1,V^{0,\infty}_2,\ldots)$ is the oracle clustering for $y^{(\infty)}$ and $K_{0,n}$ is known.
Consider the Gaussian-BSF model with $f^{(n)}_{st}=\zeta(\sigma_n)\exp\left\{-d^2_{st}/(2\sigma_n^2)\right\}$.
For each $n$, suppose $y_i\stackrel{\mathrm{indep}}{\sim}\mathrm{N}(\mu_{k},\Sigma_{k})$ if $y_{i}\in V^{0,\infty}_k$.
Set $\Lambda_{\max}:=\max_{k\in[K_{0,n}]}\lambda_{\max}(\Sigma_k)$ and $D_{\mu,{\min}}:=\min_{k,\ell\in[K_{0,n}],k\not=\ell}\|\mu_k-\mu_\ell\|_2$. Suppose Assumption~\ref{ass1} holds and $\mathrm{SNR}/\sqrt{p_n\vee\log(n)}\to\infty$ as $n\to\infty$. Then
\(
\mathbb E_{P^{(n)}_0}[\mathbb{E}(d_H(\mathcal V_n,(V_1^{0,n},\ldots,V_{K_{0,n}}^{0,n}))\mid y^{(n)})]&\precsim\exp(-O(1)\cdot \mathrm{SNR}^2),
\)
for
\(
\sigma_n^2 = o\!\left(\frac{D_{\mu,\min}^2}{n \log(K_{0,n}+1)} \wedge \Lambda_{\max}\right).
\)
\end{theorem}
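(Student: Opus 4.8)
The plan is to invoke Theorem~\ref{thm:expected_misclass_rate_bound}, which already reduces the expected Hamming risk to three pieces, valid for any admissible sequences $a'_n,b'_n$ with $\sigma_n^2\log(n)/(a'_n-b'_n)=o(1)$: the exponential term $\exp\{-(a'_n-b'_n)/(2\sigma_n^2)+n\log(K_{0,n}+1)\}$, together with the two tail contributions $n^3\sup_{k\neq\ell}P(D^2_{k\ell}<a'_n)$ and $n^3\sup_{k'}P(D^2_{k'k'}>b'_n)$. I would take both thresholds proportional to the squared minimum separation, $a'_n=c_a D^2_{\mu,\min}$ and $b'_n=c_b D^2_{\mu,\min}$ with fixed constants $0<c_b<c_a<1$ to be pinned down later, so that $a'_n-b'_n\asymp D^2_{\mu,\min}$. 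The admissibility condition then follows immediately from $\sigma_n^2=o(D^2_{\mu,\min}/(n\log(K_{0,n}+1)))$, since $n\log(K_{0,n}+1)\ge\log n$ forces $\sigma_n^2\log n=o(D^2_{\mu,\min})$.

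Next I would translate the Gaussian oracle distances into chi-squared tails and apply Lemma~\ref{lem:tailbound}. For the within-cluster term, $D^2_{k'k'}=\|X-X'\|_2^2$ with $X-X'\sim\Normal(0,2\Sigma_{k'})$; writing $X-X'=\sqrt{2}\,\Sigma_{k'}^{1/2}Z$ with $Z\sim\Normal(0,I_{p_n})$ gives the deterministic domination $D^2_{k'k'}\le 2\Lambda_{\max}\chi^2_{p_n}$, so that $P(D^2_{k'k'}>b'_n)\le P(\chi^2_{p_n}>c_b\,\mathrm{SNR}^2/2)$. For the between-cluster term, $D^2_{k\ell}=\|W+m\|_2^2$ with $m=\mu_k-\mu_\ell$, $\|m\|_2\ge D_{\mu,\min}$, and $W\sim\Normal(0,\Sigma_k+\Sigma_\ell)$; the reverse triangle inequality shows $\{D^2_{k\ell}<a'_n\}\subset\{\|W\|_2>(1-\sqrt{c_a})D_{\mu,\min}\}$, and the same domination $\|W\|_2^2\le 2\Lambda_{\max}\chi^2_{p_n}$ yields $P(D^2_{k\ell}<a'_n)\le P(\chi^2_{p_n}>(1-\sqrt{c_a})^2\mathrm{SNR}^2/2)$. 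In both cases the chi-squared threshold is a fixed multiple of $\mathrm{SNR}^2$, which dominates $p_n$ because $\mathrm{SNR}^2/(p_n\vee\log n)\to\infty$; Lemma~\ref{lem:tailbound} then produces tails of order $\exp(-O(1)\mathrm{SNR}^2)$, and the $n^3$ prefactor is absorbed since $\log n=o(\mathrm{SNR}^2)$.

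It remains to dispose of the exponential term and to confirm feasibility of the threshold choice. With $a'_n-b'_n\asymp D^2_{\mu,\min}$, the hypothesis $\sigma_n^2=o(\Lambda_{\max})$ gives $(a'_n-b'_n)/\sigma_n^2\gg D^2_{\mu,\min}/\Lambda_{\max}=\mathrm{SNR}^2$, while $\sigma_n^2=o(D^2_{\mu,\min}/(n\log(K_{0,n}+1)))$ forces $n\log(K_{0,n}+1)=o((a'_n-b'_n)/\sigma_n^2)$; hence the exponential term is at most $\exp(-(1-o(1))\,\mathrm{SNR}^2\cdot\Lambda_{\max}/\sigma_n^2)$, which is negligible relative to $\exp(-O(1)\mathrm{SNR}^2)$, and summing the three pieces gives the claim. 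The main obstacle is the apparent tension in choosing the thresholds: to make the within-cluster upper tail decay one is tempted to push $b'_n$ as high as order $D^2_{\mu,\min}$, which would violate $a'_n>b'_n$ and blow up the exponential term. The resolution is that the target rate only demands \emph{some} positive constant in the exponent, so one is free to take the decay constants small; choosing for instance $c_a=9/16$ and $c_b=1/8$ makes $(1-\sqrt{c_a})^2/2=1/32$ and $c_b/2=1/16$ both strictly positive while preserving $a'_n>b'_n$, so that the threshold window $[b'_n,a'_n]$ is nonempty and both tails still decay at rate $\exp(-O(1)\mathrm{SNR}^2)$.
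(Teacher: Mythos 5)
Your proposal is correct and follows essentially the same route as the paper's proof: it invokes Theorem~\ref{thm:expected_misclass_rate_bound} with thresholds $a'_n, b'_n$ taken as fixed fractions of $D^2_{\mu,\min}$ (the paper uses $1/2$ and $1/4$), bounds the two tail probabilities via the chi-squared domination and Lemma~\ref{lem:tailbound} exactly as in the proof of Theorem~\ref{thm:GMM_consistency_gen}, and uses the two parts of the rate condition on $\sigma_n^2$ to make the exponential term negligible. The only differences are cosmetic (your explicit constants $c_a, c_b$ and the slightly more detailed reverse-triangle-inequality bookkeeping).
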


\begin{remark}
The admissible range for $\sigma_n^2$ depends on the triplet $(K_{0,n},D_{\mu,\min}^2,\Lambda_{\max})$, but the condition requires only that $\sigma_n^2$ be asymptotically smaller than a given upper bound, so one may safely let $\sigma_n^2$ decay with $n$. This is particularly useful in practice when $K_{0,n}$ is known: the risk of oversplitting tightly grouped points is reduced even for small $\sigma_n^2$, while a small $\sigma_n^2$ simultaneously sharpens sensitivity to inter-cluster separation. Thus, when $K_{0,n}$ is known, a small $\sigma_n^2$ is both theoretically justified and practically robust.
\end{remark}

\section{Proof sketch and useful techniques}

{
The results in this section serve as the primary technical tools for the proof of the main result, which is Lemma~\ref{lem:subset}. Section~\ref{sec:refinement} introduces a notion of clustering refinement that allows posterior probability ratios to be decomposed through local refinements of partitions. Section~\ref{sec:bounding_ratios} then provides the determinant bounds needed to control these ratios. Among the results there, Lemmas~\ref{lem:ratio_det1} and \ref{lem:ratio_det2} play the central role in the proof of Lemma~\ref{lem:subset}: together with the refinement decomposition of Section~\ref{sec:refinement}, they furnish an upper bound on the posterior probability ratio for any two candidate partitions. The remaining lemmas in Section~\ref{sec:bounding_ratios} collect standard linear algebra tools used to establish these two key lemmas. The techniques and bounds developed here may also be of independent interest beyond the scope of consistency theory. Throughout, we suppress super- and subscripts of $n$ where no ambiguity arises.
}

\subsection{Refinement}
\label{sec:refinement}

An essential step in the proof of Lemma~\ref{lem:subset} is to evaluate a bound for
$\frac{\Pi^*(\mathcal V_n\not=(V^0_{1},\ldots,V^0_{K_0})|y^{(n)})}{\Pi^*(\mathcal V_n=(V^0_{1},\ldots,V^0_{K_0})|y^{(n)})}$,
which involves the ratio
$\sfrac{\Pi^*(\mathcal V_n=(V_1,\ldots, V_K)|y^{(n)})}{\Pi^*(\mathcal V_n=(V^0_1,\ldots, V^0_{K_0})|y^{(n)})}$
for an arbitrary partition $(V_1,\ldots,V_K)$ differt from the truth $(V^0_1,\ldots, V^0_{K_0})$, with the true partition in the denominator.
{ To obtain suitable bounds for this ratio, we devise a strategy based on a {\em refinement} of an existing clustering, defined below.}

\begin{definition}[Refinement]
Consider two partitions of $n$ nodes, $\mathcal V^1_n=(V^1_1,\ldots,V^1_{K_1})$ and $\mathcal V^2_n=(V^2_1,\ldots,V^2_{K_2})$. We say $\mathcal V^1_n$ is a refinement of $\mathcal V^2_n$ if for any $i\in[K_1]$, there exists $j\in[K_2]$ such that $V^1_i\subset V^2_j$.
\end{definition}

Intuitively, a refinement $\mathcal V^1_n$ of $\mathcal V^2_n$ is obtained by splitting some clusters of $\mathcal V^2_n$. As two elementary examples: any partition of $n$ nodes is a refinement of the single-cluster partition, and the partition into $n$ singletons is a refinement of every partition. { Figure~\ref{fig:refinement}(a) gives a concrete illustration of refinement.}

The focus on refinements is motivated by three considerations. First, working with ratios of posterior probabilities is more convenient than working with individual probabilities, since the normalizing constant cancels.
Second, for two partitions $(V_1,\ldots,V_K)$ and $(V^0_1,\ldots,V^0_{K_0})$ of $n$ nodes, define $W_{ij}=V_i\cap V^0_j$ for all $i\in[K]$ and $j\in[K_0]$; when $W_{ij}=\emptyset$, set $|L_{W_{ij}}+|W_{ij}|^{-1}J|\stackrel{\Delta}{=}1$ by convention. { Figure~\ref{fig:refinement}(b) illustrates this construction.} From \eqref{eq:post_prob_V_n} it follows that
\(
\begin{aligned}
&\frac{\Pi^*(\mathcal V_n=(V_1,\ldots, V_K)|y^{(n)})}{\Pi^*(\mathcal V_n=(V^0_1,\ldots, V^0_{K_0})|y^{(n)})}
=(\delta\lambda)^{K-K_0}\cdot\frac{\prod_{i=1}^K|L_{V_i}+|V_i|^{-1}J|}{\prod_{j=1}^{K_0}|L_{V^0_j}+|V_j^0|^{-1}J|}\\
=&(\delta\lambda)^{K-K_0}\cdot\left(\prod_{i=1}^K\frac{|L_{V_i}+|V_i|^{-1}J|}{\prod_{j=1}^{K_0}|L_{W_{ij}}+|W_{ij}|^{-1}J|}\right)\cdot\left(\prod_{j=1}^{K_0}\frac{\prod_{i=1}^{K}|L_{W_{ij}}+|W_{ij}|^{-1}J|}{|L_{V_{j}^0}+|V_j^0|^{-1}J|}\right),
\end{aligned}
\)
where the last expression links naturally to refinement. For any $i\in[K]$, the nonempty sets among $\{W_{ij}\}_{j\in[K_0]}$ form a refinement of $V_i$, contributing the factor $\sfrac{|L_{V_i}+|V_i|^{-1}J|}{\prod_{j=1}^{K_0}|L_{W_{ij}}+|W_{ij}|^{-1}J|}$ in the first product. An analogous statement holds for each $j\in[K_0]$. Hence, locally each partition can be viewed as a refinement of the other, and it suffices to bound all determinant ratios through refinement. Third, the refinement argument automatically handles relabeling: if two partitions are equal up to relabeling, the collection of nonempty $W_{ij}$ sets coincides with either partition.
{
This refinement decomposition is applied at the outset of the proof of Lemma~\ref{lem:subset} to obtain display~(S2.1) of the Supplementary Material.
}

\begin{figure}[ht]
\centering
\begin{tikzpicture}[
    nd/.style={circle, draw=black, fill=white, thin,
               minimum size=7mm, font=\small},
    frame/.style={rounded corners=10pt, draw=black, thick, fill=none},
    >=Stealth
]

\begin{scope}[xshift=0cm, yshift=0cm]

  \begin{scope}[xshift=0cm]
    \node[nd] (cp) at (0, 0)    {$p$};
    \node[nd] (cq) at (0,-1.3)  {$q$};
    \node[nd] (cr) at (0,-2.6)  {$r$};
    \node[nd] (cs) at (0,-3.9)  {$s$};

    \draw[frame] (-0.48, 0.48) rectangle (0.48,-3.08);
    \draw[frame] (-0.48,-3.42) rectangle (0.48,-4.38);

    \node[font=\scriptsize, above] at (0, 0.48)    {$V_1$};
    \node[font=\scriptsize, below] at (0, -4.38)   {$V_2$};

    \node[font=\small\bfseries] at (0, 1.2) {$\mathcal{V}=(V_1, V_2)$};
  \end{scope}

  \draw[->, thick, black!70, line width=1.2pt] (0.85,-1.95) -- (1.85,-1.95);
  \node[font=\scriptsize, align=center, black!60] at (1.35, -1.35)
    {split\\$V_1$};

  \begin{scope}[xshift=2.7cm]
    \node[nd] (fp) at (0, 0)    {$p$};
    \node[nd] (fq) at (0,-1.3)  {$q$};
    \node[nd] (fr) at (0,-2.6)  {$r$};
    \node[nd] (fs) at (0,-3.9)  {$s$};

    \draw[frame] (-0.48, 0.48) rectangle (0.48,-1.78);
    \draw[frame] (-0.48,-2.12) rectangle (0.48,-3.08);
    \draw[frame] (-0.48,-3.42) rectangle (0.48,-4.38);

    \node[font=\scriptsize, above] at (0,  0.48)  {$V'_{1}$};
    \node[font=\scriptsize, right] at (0.48,-2.6) {$V'_{2}$};
    \node[font=\scriptsize, below] at (0, -4.38)  {$V'_{3}$};

    \node[font=\small\bfseries] at (0, 1.2) {$\mathcal{V}'=(V'_{1}, V'_{2}, V'_{3})$};
  \end{scope}

  \node[font=\small, align=center] at (1.35, -6.15)
    {(a) $\mathcal{V}'$ is a refinement of $\mathcal{V}$:\\
     \footnotesize $V_1$ is split into $V'_{1}$ and $V'_{2}$};

\end{scope}

\begin{scope}[xshift=5.8cm, yshift=0cm]

  \def\cw{2.7}
  \def\ch{2.05}
  \def\hdr{0.8}
  \def\lhdr{1.1}

  \draw[thick, black]
    (\lhdr, -0.05) rectangle (\lhdr+\cw, -\hdr-0.05);
  \node[font=\small\bfseries] at (\lhdr+\cw/2, -\hdr/2-0.05)
    {$V^0_1=\{1,2,4\}$};

  \draw[thick, black]
    (\lhdr+\cw, -0.05) rectangle (\lhdr+2*\cw, -\hdr-0.05);
  \node[font=\small\bfseries] at (\lhdr+1.5*\cw, -\hdr/2-0.05)
    {$V^0_2=\{3,5\}$};

  \draw[thick, black]
    (0, -\hdr-0.05) rectangle (\lhdr, -\hdr-\ch-0.05);
  \node[font=\small\bfseries] at (\lhdr/2, -\hdr-\ch/2-0.05+0.22) {$V_1$};
  \node[font=\scriptsize]     at (\lhdr/2, -\hdr-\ch/2-0.05-0.22) {$\{1,2,3\}$};

  \draw[thick, black]
    (0, -\hdr-\ch-0.05) rectangle (\lhdr, -\hdr-2*\ch-0.05);
  \node[font=\small\bfseries] at (\lhdr/2, -\hdr-\ch-\ch/2-0.05+0.22) {$V_2$};
  \node[font=\scriptsize]     at (\lhdr/2, -\hdr-\ch-\ch/2-0.05-0.22) {$\{4,5\}$};

  \draw[thick, black]
    (\lhdr, -\hdr-0.05) rectangle (\lhdr+2*\cw, -\hdr-2*\ch-0.05);
  \draw[thick, black]
    (\lhdr+\cw, -\hdr-0.05) -- (\lhdr+\cw, -\hdr-2*\ch-0.05);
  \draw[thick, black]
    (\lhdr, -\hdr-\ch-0.05) -- (\lhdr+2*\cw, -\hdr-\ch-0.05);

  \node[font=\normalsize\bfseries] at (\lhdr+\cw/2,   -\hdr-\ch/2-0.05+0.25)      {$W_{11}$};
  \node[font=\small]               at (\lhdr+\cw/2,   -\hdr-\ch/2-0.05-0.25)      {$\{1,2\}$};

  \node[font=\normalsize\bfseries] at (\lhdr+1.5*\cw, -\hdr-\ch/2-0.05+0.25)      {$W_{12}$};
  \node[font=\small]               at (\lhdr+1.5*\cw, -\hdr-\ch/2-0.05-0.25)      {$\{3\}$};

  \node[font=\normalsize\bfseries] at (\lhdr+\cw/2,   -\hdr-\ch-\ch/2-0.05+0.25)  {$W_{21}$};
  \node[font=\small]               at (\lhdr+\cw/2,   -\hdr-\ch-\ch/2-0.05-0.25)  {$\{4\}$};

  \node[font=\normalsize\bfseries] at (\lhdr+1.5*\cw, -\hdr-\ch-\ch/2-0.05+0.25)  {$W_{22}$};
  \node[font=\small]               at (\lhdr+1.5*\cw, -\hdr-\ch-\ch/2-0.05-0.25)  {$\{5\}$};

  \node[font=\small, align=center]
    at (\lhdr+\cw, 0.85)
    {$\mathcal{V}_n=(V_1,V_2)$ vs.\ $\mathcal{V}^0_n=(V^0_1,V^0_2)$};

  \node[font=\small, align=center]
    at (\lhdr+\cw-0.5, -\hdr-2*\ch-1.2)
    {(b) Intersection sets $W_{ij} = V_i \cap V^0_j$};

\end{scope}

\end{tikzpicture}
\caption{
  Refinement and the $W_{ij}$ intersection construction.
  (a)~Partition $\mathcal{V}'$ is a refinement of $\mathcal{V}$:
  cluster $V_1$ is split into $V'_{1}$ and $V'_{2}$, while $V_2$ is unchanged ($V'_3=V_2$).
  (b)~For two partitions $\mathcal{V}_n=(V_1,V_2)$ and
  $\mathcal{V}^0_n=(V^0_1,V^0_2)$ of $n=5$ nodes, the intersection sets
  $W_{ij}=V_i\cap V^0_j$ arrange into a $2\times 2$ grid.
  Each row $\{W_{i1},W_{i2}\}$ partitions $V_i$ (a refinement of $V_i$);
  each column $\{W_{1j},W_{2j}\}$ partitions $V^0_j$ (a refinement of $V^0_j$).
}
\label{fig:refinement}
\end{figure}

\subsection{Bounding the ratio of determinants}
\label{sec:bounding_ratios}

Let $(V_1,\ldots,V_K)$ be any partition of $n$ nodes with $|V_i|=n_i$, and let $(V_1^0)$ denote the single-cluster partition. We seek bounds on the ratio of the associated determinants: $\prod_{i=1}^K|L_{V_i}+n_i^{-1}J|$ and $|L_n+n^{-1}J|$.
{
We first collect the linear algebra tools needed to derive these bounds.}

\begin{lemma}
\label{lem:eigens}
Suppose $L_n$ has eigenvalues $\lambda_n\geq\lambda_{n-1}\geq\cdots\geq\lambda_1=0$. Then for any $a,b\in\mathbb R$, the eigenvalues of $L_n+aI+bJ$ are $\lambda_n+a,\ldots,\lambda_2+a,nb+a$.
\end{lemma}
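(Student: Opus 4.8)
The plan is to exploit the fact that the all-ones vector $\mathbf 1$ is a common eigenvector of all three matrices involved, and that $J = \mathbf 1 \mathbf 1^T$ annihilates the orthogonal complement $\mathbf 1^\perp$. Since $L_n$ is a Laplacian, its row sums vanish, so $L_n \mathbf 1 = 0$; thus $\mathbf 1$ lies in the kernel of $L_n$ and is an eigenvector for the eigenvalue $\lambda_1 = 0$. Because $L_n$ is real symmetric, it admits an orthonormal eigenbasis, and we may choose this basis to contain $v_1 := \mathbf 1 / \sqrt n$, with the remaining vectors $v_2, \ldots, v_n$ orthogonal to $\mathbf 1$ and satisfying $L_n v_i = \lambda_i v_i$ for $i = 2, \ldots, n$.

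\textbf{Key steps.} First I would fix the orthonormal eigenbasis $\{v_1, \ldots, v_n\}$ as above. Second, I would compute the action of $M := L_n + aI + bJ$ on $v_1$: using $L_n v_1 = 0$ and $J v_1 = \mathbf 1 (\mathbf 1^T \mathbf 1)/\sqrt n = n\, v_1$, one obtains
\(
M v_1 = (0 + a + nb)\, v_1 = (nb + a)\, v_1,
\)
so $nb + a$ is an eigenvalue with eigenvector $v_1$. Third, for each $i \geq 2$, since $v_i \perp \mathbf 1$ we have $J v_i = \mathbf 1 (\mathbf 1^T v_i) = 0$, whence
\(
M v_i = L_n v_i + a v_i = (\lambda_i + a)\, v_i.
\)
This shows $v_2, \ldots, v_n$ are eigenvectors of $M$ with eigenvalues $\lambda_2 + a, \ldots, \lambda_n + a$. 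Since $\{v_1, \ldots, v_n\}$ is an orthonormal basis, these $n$ eigenvalues exhaust the spectrum of $M$, giving exactly the claimed list $\lambda_n + a, \ldots, \lambda_2 + a, nb + a$.

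\textbf{Main obstacle.} There is no serious analytic difficulty here; the argument is a simultaneous-diagonalization bookkeeping exercise. The only point requiring a moment of care is the correspondence of the zero eigenvalue with $\mathbf 1$: the lemma does not assume the underlying graph is connected, so the multiplicity of $0$ in the spectrum of $L_n$ may exceed one. This causes no problem, because we need only that $\mathbf 1$ is \emph{some} eigenvector for eigenvalue $0$, which is guaranteed by $L_n \mathbf 1 = 0$; the remaining $\lambda_2, \ldots, \lambda_n$ (possibly including further zeros) are then read off from the orthogonal eigenvectors, and the stated conclusion holds verbatim.
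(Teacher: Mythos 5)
Your proof is correct and complete. The paper itself does not spell out an argument — it simply cites Lemma 4.5 of Bapat's \emph{Graphs and Matrices} — and your simultaneous-diagonalization argument (choosing an orthonormal eigenbasis of $L_n$ containing $\mathbf 1/\sqrt n$, on which $J$ acts as $n$, and noting $J$ annihilates $\mathbf 1^\perp$) is exactly the standard proof of that cited result, including the correct handling of a possibly multiple zero eigenvalue.
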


\begin{proof}
The proof is a direct application of Lemma~4.5 in \cite{bapat14}.
\end{proof}

\begin{remark}
\label{remark:det_Ln+J/n}
A direct consequence of Lemma~\ref{lem:eigens} is that $|L_n+aI+bJ|=(nb+a)\prod_{i=2}^n(\lambda_i+a)$. In the special case $a=0$, $b=n^{-1}$:
\[
\label{eq:det_Ln+1/nJ}
|L_n+n^{-1}J|=\prod_{i=2}^n\lambda_i=n|L_n[1]|,
\]
where the last equality follows from Kirchhoff's matrix theorem.
\end{remark}

\begin{lemma}[Matrix Determinant Lemma]
\label{lem:matrix_det_lamma}
Let $M$ be an invertible $n\times n$ matrix and let $\mathbf{a}$ and $\mathbf{b}$ be column vectors in $\mathbb R^n$. Then $|M+\mathbf{a}\mathbf{b}^T|=|M|(1+\mathbf{b}^T M^{-1}\mathbf{a})$.
\end{lemma}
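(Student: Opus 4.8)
The plan is to prove this classical identity by a Schur-complement computation on an auxiliary $(n+1)\times(n+1)$ bordered matrix, which sidesteps any eigenvalue bookkeeping and produces the result by comparing two evaluations of a single determinant. Concretely, I would consider
$$
\begin{pmatrix} 1 & \mathbf b^T \\ -\mathbf a & M \end{pmatrix},
$$
and evaluate its determinant in two ways using the two block-triangular factorizations available because both the top-left block $1$ and the bottom-right block $M$ are invertible.

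First I would expand along the invertible $1\times 1$ top-left block, which by the Schur-complement formula gives
$$
1\cdot\bigl|\,M-(-\mathbf a)\,(1)^{-1}\,\mathbf b^T\,\bigr|=|M+\mathbf a\mathbf b^T|.
$$
Next I would expand along the invertible bottom-right block $M$, which gives
$$
|M|\cdot\bigl(1-\mathbf b^T M^{-1}(-\mathbf a)\bigr)=|M|\,\bigl(1+\mathbf b^T M^{-1}\mathbf a\bigr).
$$
Since both expressions equal the determinant of the same matrix, equating them delivers the claimed identity $|M+\mathbf a\mathbf b^T|=|M|(1+\mathbf b^T M^{-1}\mathbf a)$.

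An equivalent and slightly more elementary route, should it be preferred, is to factor $M+\mathbf a\mathbf b^T=M(I+M^{-1}\mathbf a\,\mathbf b^T)$, so that $|M+\mathbf a\mathbf b^T|=|M|\,|I+\mathbf u\mathbf b^T|$ with $\mathbf u:=M^{-1}\mathbf a$; it then suffices to note that the rank-one matrix $\mathbf u\mathbf b^T$ has eigenvalues $\mathbf b^T\mathbf u$ (with eigenvector $\mathbf u$) and $0$ with multiplicity $n-1$, whence $|I+\mathbf u\mathbf b^T|=1+\mathbf b^T\mathbf u=1+\mathbf b^T M^{-1}\mathbf a$.

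As for difficulty, there is essentially no obstacle: this is a standard identity, and the only point requiring minor care is the degenerate case $\mathbf u=0$ in the eigenvalue argument, which is trivial since then both sides reduce to $|M|$. The block-matrix proof avoids even this caveat, as it only requires invertibility of the two diagonal blocks, so I would adopt it as the primary argument and mention the factorization route as an alternative.
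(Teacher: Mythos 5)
Your proof is correct. Note that the paper does not actually prove this lemma: it simply cites Theorem 18.1.1 of Harville (1997) (and points to Horn and Johnson, Section 0.8.5), so there is no in-paper argument to compare against. Your primary argument --- evaluating the determinant of the bordered matrix $\begin{pmatrix} 1 & \mathbf b^T \\ -\mathbf a & M \end{pmatrix}$ via the two Schur-complement factorizations --- is the standard textbook proof and is carried out correctly: the expansion about the $1\times 1$ block gives $|M+\mathbf a\mathbf b^T|$ and the expansion about $M$ gives $|M|(1+\mathbf b^T M^{-1}\mathbf a)$, and equating the two yields the identity. Your alternative via $|I+\mathbf u\mathbf b^T|=1+\mathbf b^T\mathbf u$ for the rank-one perturbation is also sound, and you correctly flag the only degenerate case ($\mathbf u=0$). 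Supplying a self-contained proof where the paper defers to a reference is a strict improvement in completeness; there is nothing to fault here.
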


\begin{lemma}
\label{lem:ineq_det}
Suppose $A$ is a symmetric positive definite $n\times n$ matrix and $B$ is a symmetric nonnegative definite $n\times n$ matrix. Then $|A+B|\geq|A|$.
\end{lemma}

The proofs of Lemmas~\ref{lem:matrix_det_lamma} and \ref{lem:ineq_det} follow from Theorems~18.1.1 and 18.1.6 of \cite{david1997}, respectively; see also \cite{steven2019} and Section~0.8.5 of \cite{Horn_Johnson_2012}.

{
The following lemma provides an upper bound for $\sfrac{|L_n + n^{-1}J|}{\prod_{i=1}^K |L_{V_i} + n_i^{-1}J|}$, corresponding to the case where the denominator partition is a refinement of the numerator partition. This bound is invoked at display~(S2.3) of the Supplementary Material in the proof of Lemma~\ref{lem:subset}.}

\begin{lemma}
\label{lem:ratio_det1}
Suppose there exists $\varepsilon>0$ such that $f^{(n)}_{st}\leq\varepsilon\leq f^{(n)}_{s't'}$ for any $s\not\sim t$ and $s'\sim t'$ under $\mathcal V_n=(V_1,\ldots,V_K)$. Then
\[
\frac{|L_n+n^{-1}J|}{\prod_{i=1}^K|L_{V_i}+n_i^{-1}J|}\leq (n\varepsilon)^{K-1}\prod_{i=1}^K\prod_{j=2}^{n_{i}}\left(1+\frac{(n-n_i)\varepsilon}{\lambda_{ij}}\right),
\]
where $\prod_{j=2}^1x_j\stackrel{\Delta}{=}1$, $n_i=|V_i|$, and $\lambda_{ij}$ is the $j$-th smallest eigenvalue of $L_{V_i}$.
\end{lemma}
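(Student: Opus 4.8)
The plan is to compare the two determinants through a positive-semidefinite ordering of Laplacians and then to evaluate the dominating determinant \emph{exactly} via a rank-one update. First I would split the full Laplacian along the partition as $L_n = L^W + L^B$, where $L^W = \mathrm{blockdiag}(L_{V_1},\ldots,L_{V_K})$ collects the within-cluster edges and $L^B$ is the Laplacian of the graph retaining only the between-cluster edges (with weights $f^{(n)}_{st}$, $s\not\sim t$). Since $f^{(n)}_{st}\le\varepsilon$ whenever $s\not\sim t$, the matrix $L^B_\varepsilon - L^B$, where $L^B_\varepsilon$ is the Laplacian of the complete between-cluster graph with every edge weight equal to $\varepsilon$, is itself the Laplacian of a graph with nonnegative weights $\varepsilon - f^{(n)}_{st}\ge 0$, hence $L^B \preceq L^B_\varepsilon$. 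Adding $L^W + n^{-1}J$ to both sides gives $L_n + n^{-1}J \preceq M_\varepsilon := L^W + L^B_\varepsilon + n^{-1}J$.

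Next I would pass to determinants. Both matrices are symmetric and PSD, so by Lemma~\ref{lem:ineq_det} (applied with $A = L_n + n^{-1}J$ and $B = L^B_\varepsilon - L^B$) we get $|L_n + n^{-1}J| \le |M_\varepsilon|$ whenever $L_n + n^{-1}J$ is positive definite; in the degenerate case where the full graph is disconnected the left side is $0$ and the claimed inequality is trivial. It therefore remains only to evaluate $|M_\varepsilon|$ and to show it equals the stated right-hand side. I would rearrange $M_\varepsilon = B + (n^{-1}-\varepsilon)J$, where, after collecting the block-diagonal pieces of $L^B_\varepsilon$, the matrix $B = \mathrm{blockdiag}\big(L_{V_i} + (n-n_i)\varepsilon\, I_{n_i} + \varepsilon J_{n_i}\big)$ is block-diagonal and positive definite.

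The computation then closes cleanly. By Lemma~\ref{lem:eigens} (with $a=(n-n_i)\varepsilon$, $b=\varepsilon$) each block has determinant $n\varepsilon\prod_{j=2}^{n_i}(\lambda_{ij}+(n-n_i)\varepsilon)$, so $|B| = (n\varepsilon)^K\prod_{i=1}^K\prod_{j=2}^{n_i}(\lambda_{ij}+(n-n_i)\varepsilon)$. Applying Lemma~\ref{lem:matrix_det_lamma} to the rank-one update $M_\varepsilon = B + (n^{-1}-\varepsilon)\mathbf 1\mathbf 1^T$ yields $|M_\varepsilon| = |B|\,\big(1 + (n^{-1}-\varepsilon)\,\mathbf 1^T B^{-1}\mathbf 1\big)$. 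The decisive simplification is that each $\mathbf 1_{V_i}$ is an eigenvector of the $i$-th block with eigenvalue exactly $n\varepsilon$ (since $L_{V_i}\mathbf 1_{V_i}=0$ and $J_{n_i}\mathbf 1_{V_i}=n_i\mathbf 1_{V_i}$), so $\mathbf 1^T B^{-1}\mathbf 1 = \sum_i n_i/(n\varepsilon) = 1/\varepsilon$ and the scalar factor collapses to exactly $1/(n\varepsilon)$. Hence $|M_\varepsilon| = (n\varepsilon)^{K-1}\prod_{i=1}^K\prod_{j=2}^{n_i}(\lambda_{ij}+(n-n_i)\varepsilon)$, which, after dividing by $\prod_i|L_{V_i}+n_i^{-1}J| = \prod_i\prod_{j=2}^{n_i}\lambda_{ij}$ (again via Lemma~\ref{lem:eigens}), is precisely the stated bound.

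I expect the main obstacle to be conceptual rather than computational, and twofold: first, spotting the right dominating matrix $L^B_\varepsilon$ so that the PSD comparison invokes only the hypothesis $f^{(n)}_{st}\le\varepsilon$ on the between-cluster kernels (notably the within-cluster lower bound $\varepsilon \le f^{(n)}_{s't'}$ is not actually needed for this lemma); and second, arranging $B$ so that the $\mathbf 1_{V_i}$ are common eigenvectors at the single eigenvalue $n\varepsilon$, which is exactly what makes the rank-one correction evaluate to the clean factor $1/(n\varepsilon)$ and produce the sharp constant $(n\varepsilon)^{K-1}$ rather than a loose upper bound. Some care is also needed to handle the possibly singular $L_n + n^{-1}J$ and to track the exponent bookkeeping $K \to K-1$.
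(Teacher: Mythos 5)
Your proof is correct and follows essentially the same route as the paper's: your dominating matrix $M_\varepsilon = L^W + L^B_\varepsilon + n^{-1}J$ is exactly the paper's $A + n^{-1}J$, and both arguments evaluate it the same way, via the block-diagonalization of $A+\varepsilon J$ together with the rank-one update and the identity $\mathbf 1^T(A+\varepsilon J)^{-1}\mathbf 1 = 1/\varepsilon$. One small caveat: while the within-cluster lower bound $\varepsilon\le f^{(n)}_{s't'}$ is indeed unused in the comparison step, as you note, it is what guarantees $\lambda_{ij}>0$ for $j\ge2$, so that the stated ratio and the factors $1+(n-n_i)\varepsilon/\lambda_{ij}$ are well defined.
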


{
The key step in the proof of Lemma~\ref{lem:ratio_det1} is to replace the cross-cluster edge weights $f_{st}^{(n)}$ in the numerator Laplacian by their upper bound $\varepsilon$. This dominates the Laplacian in the positive semidefinite order and yields a block structure consistent with the partition $(V_1,\ldots,V_K)$, from which the determinant bound follows.
}

\begin{proof}[Proof of Lemma~\ref{lem:ratio_det1}]
Consider the Laplacian $A$ on $n$ nodes with edge weights $a_{st}=f^{(n)}_{st}1_{(s\sim t)}+\varepsilon 1_{(s\not\sim t)}\geq0$, and the Laplacian $B$ on $n$ nodes with edge weights $b_{st}=(\varepsilon-f^{(n)}_{st})1_{(s\not\sim t)}\geq0$ for $s,t\in[n]$.

Since $a_{st}-b_{st}=f^{(n)}_{st}$ for all $s,t\in[n]$, we have $L_n=A-B$. Hence,
\[
\label{ineq:det_L_n}
|L_n+n^{-1}J|=|A-B+n^{-1}J|\leq|A+n^{-1}J|,
\]
where the inequality holds because $L_n+n^{-1}J$ is positive definite and $B$ is nonnegative definite.
To compute $|A+n^{-1}J|$, apply Lemma~\ref{lem:matrix_det_lamma}:
\[
\label{eq:det_A_n}
|A+n^{-1}J|=|(A+\varepsilon J)+(n^{-1}-\varepsilon)J|=|A+\varepsilon J|\cdot(1-(\varepsilon-n^{-1})\mathbf{1}_n^T(A+\varepsilon J)^{-1}\mathbf{1}_n).
\]
Note that the determinant $|A+\varepsilon J|$ equals the determinant of a block-diagonal matrix:
\(
\left|\begin{pmatrix}
L_{V_1}+(n-n_1)\varepsilon I+\varepsilon J & & & \\
 & L_{V_2}+(n-n_2)\varepsilon I+\varepsilon J & & \\
 & & \ddots & \\
 & & & L_{V_K}+(n-n_K)\varepsilon I+\varepsilon J
\end{pmatrix}\right|.
\)
By Lemma~\ref{lem:eigens},
\[
\label{eq:det_A_epsilon}
\begin{aligned}
|A+\varepsilon J|&=\prod_{i=1}^K|L_{V_i}+(n-n_i)\varepsilon I+\varepsilon J|
=\prod_{i=1}^{K}(n\varepsilon)\prod_{j=2}^{n_i}(\lambda_{ij}+(n-n_i)\varepsilon)\\
&=(n\varepsilon)^K\prod_{i=1}^K\prod_{j=2}^{n_i}(\lambda_{ij}+(n-n_i)\varepsilon),
\end{aligned}
\]
and $(A+\varepsilon J)\mathbf{1}_n=n\varepsilon\mathbf{1}_n$, so $(A+\varepsilon J)^{-1}\mathbf{1}_n=(n\varepsilon)^{-1}\mathbf{1}_n$. Hence,
\[
\label{eq:det_A_epsilon2}
\mathbf{1}_n^T(A+\varepsilon J)^{-1}\mathbf{1}_n=\frac{1}{n\varepsilon}\mathbf{1}_n^T\mathbf{1}_n=\frac{1}{\varepsilon}.
\]
Combining \eqref{ineq:det_L_n}, \eqref{eq:det_A_n}, \eqref{eq:det_A_epsilon}, and \eqref{eq:det_A_epsilon2} gives
\(
|L_n+n^{-1}J|\leq (n\varepsilon)^{K-1}\prod_{i=1}^K\prod_{j=2}^{n_i}(\lambda_{ij}+(n-n_i)\varepsilon).
\)
By Lemma~\ref{lem:eigens}, $|L_{V_i}+n_i^{-1}J|=\prod_{j=2}^{n_i}\lambda_{ij}$, and therefore
\(
\frac{|L_n+n^{-1}J|}{\prod_{i=1}^K|L_{V_i}+n_i^{-1}J|}
\leq \frac{(n\varepsilon)^{K-1}\prod_{i=1}^K\prod_{j=2}^{n_i}(\lambda_{ij}+(n-n_i)\varepsilon)}{\prod_{i=1}^K\prod_{j=2}^{n_i}\lambda_{ij}}
=(n\varepsilon)^{K-1}\prod_{i=1}^K\prod_{j=2}^{n_i}\left(1+\frac{(n-n_i)\varepsilon}{\lambda_{ij}}\right).
\)
\end{proof}

{
Next we derive an upper bound for $\sfrac{\prod_{i=1}^K|L_{V_i}+n_i^{-1}J|}{|L_n+n^{-1}J|}$, the complementary regime in which the numerator partition is a refinement of the denominator. Lemma~\ref{lem:ineq_L_n} below reduces this to a lower bound on $|T_K[1]|$, the cofactor of a reduced Laplacian constructed from aggregated cluster weights, thereby converting a ratio of determinants into the evaluation of a single Laplacian determinant. Lemma~\ref{lem:ratio_det2} then delivers the bound and is invoked at display~(S2.6) of the Supplementary Material in the proof of Lemma~\ref{lem:subset}.
}

\begin{lemma}
\label{lem:ineq_L_n}
Let $\tau_{st}:=\sum_{i\in V_s}\sum_{j\in V_t}f_{ij}$ for $s,t\in[K]$.
Let $T_K$ be the Laplacian on $K$ nodes with edge weights $\tau_{st}$, i.e.,
\[
T_K=\begin{pmatrix}
\sum_{t\not=1}\tau_{1t}&-\tau_{12}&\cdots&-\tau_{1K}\\
-\tau_{12} & \sum_{t\not=2}\tau_{2t}&\cdots&-\tau_{2K}\\
\vdots&\vdots&\ddots&\vdots\\
-\tau_{1K}&\cdots&\cdots&\sum_{t\not=K}\tau_{KK}
\end{pmatrix}.
\]
Then
\[
|L_n[1]|\geq |T_K[1]|\cdot\prod_{i=1}^K|L_{V_i}[1]|.
\]
\end{lemma}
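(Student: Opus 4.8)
\textbf{Proof proposal for Lemma~\ref{lem:ineq_L_n}.}

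The plan is to establish the inequality $|L_n[1]|\geq |T_K[1]|\cdot\prod_{i=1}^K|L_{V_i}[1]|$ by exploiting the combinatorial meaning of each determinant under Kirchhoff's matrix theorem. By Remark~\ref{remark:det_Ln+J/n}, $|L_n[1]|$ counts (weighted by edge-weight products) the spanning trees of the complete weighted graph on $[n]$, while $|T_K[1]|$ counts the spanning trees of the contracted graph on the $K$ super-nodes $V_1,\ldots,V_K$ with aggregated edge weights $\tau_{st}$, and each $|L_{V_i}[1]|$ counts the spanning trees within block $V_i$. So the right-hand side is the total weight of those spanning forests of $[n]$ obtained by first choosing a spanning tree inside each block $V_i$ and then choosing a spanning tree on the contracted super-graph $T_K$. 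The left-hand side counts \emph{all} spanning trees of $[n]$.

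The key combinatorial observation I would make is that every configuration counted on the right-hand side injects into a spanning tree of $[n]$ on the left: given a within-block spanning tree in each $V_i$ (contributing $\prod_i |L_{V_i}[1]|$ worth of weight) together with a spanning tree of the super-graph (contributing $|T_K[1]|$), one can realize the inter-block edges concretely because each super-edge weight $\tau_{st}=\sum_{i\in V_s}\sum_{j\in V_t} f_{ij}$ expands as a sum over the actual cross-block edges. Expanding the product $|T_K[1]|\cdot\prod_i|L_{V_i}[1]|$ by distributing each $\tau_{st}$ over its constituent $f_{ij}$ therefore yields a sum of weighted spanning trees of $[n]$, and these are all distinct and all genuinely spanning (the within-block trees connect each block, and the chosen cross edges connect the blocks according to the super-tree, producing a connected acyclic spanning subgraph). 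Since these account for only a subset of all spanning trees of $[n]$ and all edge weights $f_{ij}\geq 0$, the total on the left dominates, giving the inequality.

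First I would invoke Kirchhoff's theorem (via Remark~\ref{remark:det_Ln+J/n}) to rewrite all three determinants as weighted spanning-tree counts. Second, I would expand $|T_K[1]|$ as its weighted spanning-tree sum over the super-graph and substitute $\tau_{st}=\sum_{i\in V_s,\,j\in V_t} f_{ij}$, distributing the products to obtain a sum indexed by a choice, for each super-edge in the super-tree, of a concrete representative inter-block edge. Third, I would combine this with the within-block trees and verify that the resulting subgraph of $[n]$ is a genuine spanning tree: connectivity follows because contracting each block to a point recovers the super-tree (hence all blocks are joined), and acyclicity follows because the super-tree has no cycles and each block contributes a tree internally. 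Fourth, I would argue that the map from (super-tree, representative edges, within-block trees) to spanning trees of $[n]$ is injective with matching weight $\prod f_{ij}$, so the right-hand side equals the total weight of a \emph{sub-collection} of spanning trees of $[n]$; since the omitted spanning trees (those using more than one cross-edge between some pair of blocks) contribute nonnegative weight, the inequality follows.

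The main obstacle I anticipate is the rigorous bookkeeping in the third and fourth steps: verifying that distributing the $\tau_{st}$ sums produces exactly a subset of spanning trees of $[n]$ without double-counting, and that every such configuration is acyclic and spanning. A cleaner route that sidesteps the explicit bijection would be an algebraic contraction/deletion argument or an application of a known generalized matrix-tree (transfer) identity that factors the spanning-tree polynomial of a graph through a contraction of vertex blocks; if such an identity is available it would give the factorization $|L_n[1]| = \bigl(\text{sum over all forests}\bigr)$ directly, with the stated product being the leading nonnegative term. I would first attempt the self-contained combinatorial injection above, falling back on the block-contraction identity only if the injectivity verification becomes unwieldy.
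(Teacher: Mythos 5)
Your proposal is correct and follows essentially the same route as the paper: the paper likewise invokes Kirchhoff's theorem, defines a restricted family of spanning trees of $[n]$ built as a product of within-block spanning trees and a connecting tree across the blocks, shows its total weight factors as $|T_K[1]|\cdot\prod_{k}|L_{V_k}[1]|$ after distributing $\tau_{st}=\sum_{i\in V_s}\sum_{j\in V_t}f_{ij}$, and concludes by nonnegativity of the omitted trees' weights. If anything, your explicit statement that each super-edge independently selects its own representative cross edge (so different super-edges incident to the same block may touch different vertices of that block) is the precise form of the bookkeeping needed for the factorization to match $|T_K[1]|$ exactly.
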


\begin{proof}[Proof of Lemma~\ref{lem:ineq_L_n}]
The proof relies on a carefully constructed application of Kirchhoff's matrix theorem. Let $f_{ij}$ denote the edge weights. Then
\(
|L_n[1]|=\sum_{T_n\in\mathcal T_n}\prod_{(i,j)\in T_n}f_{ij},
\)
where $\mathcal T_n$ is the set of all spanning trees on $n$ nodes.
For the partition $\mathcal V_n=(V_1,\ldots,V_K)$, let $\mathcal{E}_k$ denote the set of spanning trees on $V_k$.
Define a restricted set $\mathcal T'_n\subset\mathcal T_n$ as the product $(\times_{k=1}^K\mathcal{E}_k)\times\mathcal{C}$, where $\mathcal{C}$ is the set of spanning trees with $K-1$ edges connecting exactly one node from each $V_k$, and $\times$ denotes the external direct product. Elements of $\mathcal T'_n$ are spanning trees on all $n$ nodes obtained by concatenating one tree from each $\mathcal{E}_k$ using a connecting tree from $\mathcal{C}$.

A typical element $T_n\in\mathcal T'_n$ decomposes as $\{T_{V_1},\ldots,T_{V_K},C\}$, where $T_{V_k}\in\mathcal{E}_k$ and $C\in\mathcal{C}$, with $\prod_{(i,j)\in T_n}f_{ij}=\bigl(\prod_k\prod_{(i,j)\in T_{V_k}}f_{ij}\bigr)\bigl(\prod_{(i,j)\in C}f_{ij}\bigr)$.
Hence $|L_n[1]|\geq\sum_{T_n\in\mathcal T'_n}\prod_{(i,j)\in T_n}f_{ij}$, and since $\mathcal T'_n$ is a product space,
\(
\sum_{T_n\in\mathcal T'_n}\prod_{(i,j)\in T_n}f_{ij}
=\left(\prod_k\sum_{T_{V_k}\in\mathcal{E}_k}\prod_{(i,j)\in T_{V_k}}f_{ij}\right)\left(\sum_{C\in\mathcal{C}}\prod_{(i,j)\in C}f_{ij}\right).
\)
The second factor equals $|T_K[1]|$, since $\sum_{C\in\mathcal{C}}\prod_{(i,j)\in C}f_{ij}=\sum_{(s,t)\in[K]}\tau_{st}$ with $\tau_{st}=\sum_{i\in V_s}\sum_{j\in V_t}f_{ij}$. The first factor equals $\prod_k|L_{V_k}[1]|$ by Kirchhoff's theorem. This completes the proof.
\end{proof}

\begin{lemma}
\label{lem:ratio_det2}
Suppose there exists $\gamma>0$ such that $f^{(n)}_{st}\geq\gamma$ for all $s,t\in[n]$. Then
\(
\sup_{\mathcal V_n:|\mathcal V_n|=K}\frac{\prod_{i=1}^K|L_{V_i}+n_i^{-1}J|}{|L_n+n^{-1}J|}\leq \left(\frac{1}{n\gamma}\right)^{K-1}.
\)
\end{lemma}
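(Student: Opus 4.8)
The plan is to reduce the entire ratio to a weighted spanning-tree count on the $K$-node quotient graph, exploiting the determinant identities already established. First I would rewrite each determinant using Remark~\ref{remark:det_Ln+J/n}, which gives $|L_{V_i}+n_i^{-1}J| = n_i|L_{V_i}[1]|$ and $|L_n+n^{-1}J| = n|L_n[1]|$. The target ratio then factors as
\(
\frac{\prod_{i=1}^K|L_{V_i}+n_i^{-1}J|}{|L_n+n^{-1}J|} = \frac{\prod_{i=1}^K n_i}{n}\cdot\frac{\prod_{i=1}^K|L_{V_i}[1]|}{|L_n[1]|}.
\)
Applying Lemma~\ref{lem:ineq_L_n}, namely $|L_n[1]|\geq |T_K[1]|\prod_{i=1}^K|L_{V_i}[1]|$, the cluster-level determinants $\prod_i|L_{V_i}[1]|$ cancel, leaving the clean intermediate bound
\(
\frac{\prod_{i=1}^K|L_{V_i}+n_i^{-1}J|}{|L_n+n^{-1}J|}\leq \frac{\prod_{i=1}^K n_i}{n}\cdot\frac{1}{|T_K[1]|}.
\)

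Next I would bound $|T_K[1]|$ from below. By Kirchhoff's matrix theorem applied to the quotient Laplacian $T_K$, we have $|T_K[1]| = \sum_{T}\prod_{(s,t)\in T}\tau_{st}$, the sum running over all spanning trees $T$ of the complete graph on $[K]$. Since $f^{(n)}_{ij}\geq\gamma$ for all $i,j$, the quotient weights satisfy $\tau_{st}=\sum_{i\in V_s}\sum_{j\in V_t}f^{(n)}_{ij}\geq n_s n_t\gamma$. A spanning tree on $K$ nodes has exactly $K-1$ edges, so
\(
|T_K[1]|\geq \gamma^{K-1}\sum_{T}\prod_{(s,t)\in T} n_s n_t.
\)

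The crux is evaluating $\sum_{T}\prod_{(s,t)\in T} n_s n_t$. Writing $\prod_{(s,t)\in T} n_s n_t = \prod_{s=1}^K n_s^{d_s(T)}$ where $d_s(T)$ is the degree of node $s$ in $T$, and invoking the generalized Cayley formula (the number of labeled trees on $[K]$ with degree sequence $(d_1,\ldots,d_K)$ equals $\binom{K-2}{d_1-1,\ldots,d_K-1}$), the substitution $e_s=d_s-1$ together with the multinomial theorem collapses the sum to $\bigl(\prod_{s}n_s\bigr)\bigl(\sum_s n_s\bigr)^{K-2}=\bigl(\prod_s n_s\bigr)n^{K-2}$. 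I expect this combinatorial identity to be the main obstacle, as it is the single step not directly delivered by the preceding lemmas; the rest is bookkeeping. Substituting back, the factor $\prod_s n_s$ cancels and
\(
\frac{\prod_{i=1}^K|L_{V_i}+n_i^{-1}J|}{|L_n+n^{-1}J|}\leq \frac{\prod_{i=1}^K n_i}{n}\cdot\frac{1}{\gamma^{K-1}\bigl(\prod_s n_s\bigr)n^{K-2}} = \frac{1}{(n\gamma)^{K-1}}.
\)
Crucially, this bound is independent of the particular partition $\mathcal V_n$, so it holds uniformly over all $K$-cluster partitions and hence for the supremum, completing the proof.
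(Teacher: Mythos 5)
Your proof is correct, and its first two reductions --- rewriting both determinants via Remark~\ref{remark:det_Ln+J/n} and then cancelling the within-cluster determinants with Lemma~\ref{lem:ineq_L_n} to reach the intermediate bound $\frac{\prod_i n_i}{n}\cdot\frac{1}{|T_K[1]|}$ --- coincide exactly with the paper's. Where you genuinely diverge is in establishing the lower bound $|T_K[1]|\ge\gamma^{K-1}n^{K-2}\prod_i n_i$. The paper stays linear-algebraic: it introduces the Laplacian $T_K^\gamma$ with edge weights $n_sn_t\gamma$, invokes Lemma~\ref{lem:ineq_det} to discard the nonnegative-definite difference $T_K-T_K^\gamma$, and then evaluates $|T_K^\gamma[n]|$ explicitly by writing the reduced $(K-1)\times(K-1)$ matrix as a diagonal matrix plus a rank-one perturbation and applying the matrix determinant lemma (Lemma~\ref{lem:matrix_det_lamma}). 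You instead expand $|T_K[1]|$ by the weighted matrix-tree theorem, use monotonicity of the spanning-tree polynomial in the edge weights $\tau_{st}\ge n_sn_t\gamma$, and evaluate $\sum_T\prod_{(s,t)\in T}n_sn_t=\bigl(\prod_s n_s\bigr)n^{K-2}$ via the degree-sequence form of Cayley's formula plus the multinomial theorem; that identity is a classical one and your derivation of it is sound, and since the resulting bound is partition-free the supremum step is immediate. The trade-off: the paper's route uses only tools it has already stated as lemmas, whereas yours imports the generalized Cayley formula from outside the paper; in exchange, your argument makes transparent that the paper's explicit determinant computation is precisely the weighted spanning-tree count of the complete graph on $[K]$ with product weights, i.e., the two computations are the same fact in different clothing. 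As a side note, your version also sidesteps a sign slip in the paper's write-up, which asserts $\tau^{\Delta}_{st}=n_sn_t\gamma-\sum_{i\in V_s}\sum_{j\in V_t}f^{(n)}_{ij}\ge 0$ when the hypothesis $f^{(n)}_{ij}\ge\gamma$ actually yields the reverse inequality (which is what is needed for $T_K-T_K^\gamma$ to be a nonnegative-weight Laplacian).
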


{
The proof of Lemma~\ref{lem:ratio_det2} builds on Lemma~\ref{lem:ineq_L_n}. It remains to lower-bound $|T_K[1]|$. Under $f^{(n)}_{st}\geq\gamma$, each aggregated weight satisfies $\tau_{st}\geq n_sn_t\gamma$, allowing $T_K$ to be compared with the Laplacian whose edge weights are $n_sn_t\gamma$ and thereby yielding the required lower bound.
}

\begin{proof}[Proof of Lemma~\ref{lem:ratio_det2}]
Let $T_K^\gamma$ be the Laplacian on $K$ nodes with edge weights $\tau_{st}^\gamma=n_sn_t\gamma$ for $s,t\in[K]$. Then $T_K-T_K^\gamma$ is a Laplacian with edge weights $\tau^\Delta_{st}=\sum_{i\in V_s}\sum_{j\in V_t}f^{(n)}_{st}-n_sn_t\gamma\geq0$, so $T_K^\gamma[n]$ is positive definite and $(T_K-T_K^\gamma)[n]$ is nonnegative definite. By Lemma~\ref{lem:ineq_det},
\[
\label{ineq:T_K}
\begin{aligned}
|T_K[1]|&=|T_K[n]|\geq|T_K^\gamma[n]|\\
&=\gamma^{K-1}\left|\begin{matrix}
n_1(n-n_1)&-n_1n_2&\cdots&-n_1n_{K-1}\\
-n_1n_2&n_2(n-n_2)&\cdots&-n_2n_{K-1}\\
\vdots&\vdots&\ddots&\vdots\\
-n_1n_{K-1}&-n_2n_{K-1}&\cdots&n_{K-1}(n-n_{K-1})
\end{matrix}\right|\\
&=\gamma^{K-1}(-1)^{K-1}\prod_{i=1}^{K-1}(-n_in)\cdot\left(1+\sum_{i=1}^{K-1}\frac{n_i^2}{-n_in}\right)\\
&=\gamma^{K-1}n^{K-2}\prod_{i=1}^Kn_i.
\end{aligned}
\]
Hence, for any $K\in\mathbb{N}^+$ and partition $\mathcal V_n=(V_1,\ldots,V_K)$,
\(
\frac{\prod_{i=1}^K|L_{V_i}+n_i^{-1}J|}{|L_n+n^{-1}J|}
\stackrel{(a)}{=}\frac{\prod_{i=1}^K(n_i|L_{V_i}[1]|)}{n|L_n[1]|}
\stackrel{(b)}{\leq}\frac{\prod_{i=1}^Kn_i}{n}\cdot\frac{1}{|T_K[1]|}
\stackrel{(c)}{\leq}\left(\frac{1}{n\gamma}\right)^{K-1},
\)
where $(a)$, $(b)$, and $(c)$ follow from \eqref{eq:det_Ln+1/nJ}, Lemma~\ref{lem:ineq_L_n}, and \eqref{ineq:T_K}, respectively.
\end{proof}

\section{Discussion}
While most of our theoretical results address the case where the number of clusters is unknown, we also analyze a simplified scenario with known number of clusters, deriving an upper bound for the misclassification rate. We demonstrate the practical utility of our approach through illustrative examples and a small simulation in the Supplementary. Several promising directions remain for future investigation.
First, we primarily focus on recovering oracle clustering, when data within the same cluster are i.i.d. from some component distribution. The restriction could be relaxed to dependent data, since most of the theoretical developments in Section~\ref{section:main} do not need the i.i.d. condition. 
Second, for the consistency theory, we focus on the case where oracle clustering is asymptotically identifiable via $n$-dependent separation conditions. One could extend to the case when the oracle clustering is only partially identifiable, subject to a Bayes misclustering error. However, intuitively, reaching the Bayes error would require stronger assumptions than the ones used in this article, but the empirical results in the Supplementary are promising in this regard. 
Third, for consistency on graphical model-based clustering models, we chose to study the spanning forest graph due to its good empirical performance and mathematical tractability. Clearly, there is a large family of graphs one could consider, including the graphs whose edge formation may be influenced by external covariates. It would be interesting to expand the theory in such a new class of model-based clustering methods.
Our illustration of the Gaussian oracle model allows the dimension to grow moderately with the sample size, without requiring additional assumptions. Future research will explore the standard high-dimensional setting and investigate the structural properties necessary for consistency guarantees of the BSF model.
{ Another direction will be to put hyperpriors on the prior parameters and establish clustering consistency results through an analogous degeneracy characterization as in Proposition 2 of \cite{Ascolani_2022}.}

\begin{supplement}
\stitle{Supplement to ``Consistency of Graphical Model-based Clustering: Robust Clustering using Bayesian Spanning Forest''}
\sdescription{This file contains (1) an illustration of the matrix-tree theorem on a graph of three nodes, (2) the proofs of Lemma 3.3, Lemma 3.5, Lemma 3.6, Theorem 3.7, Theorem 4.2, Theorem 4.4, and Theorem 4.5 in the main paper, (3) simulation study for investigating the finite-sample behavior of the BSF model.}
\end{supplement}

\section*{Disclosure of the use of generative AI}
In preparing this paper, the authors used
Claude Code to assist with code development and simulation. The authors
take full responsibility for all content presented.

\section*{Acknowledgement}
We would like to thank the Editor, Associate Editor, and two anonymous referees for their helpful comments that substantially have helped to improve the quality of the manuscript.

\numberwithin{equation}{section}
\renewcommand{\thesection}{S\arabic{section}}
\renewcommand{\theequation}{S\arabic{section}.\arabic{equation}}
\renewcommand{\thetheorem}{S\arabic{section}.\arabic{theorem}}
\renewcommand{\thelemma}{S\arabic{section}.\arabic{lemma}}
\renewcommand{\thecorollary}{S\arabic{section}.\arabic{corollary}}

\section{Illustration of the matrix-tree theorem}

{In Figure~\ref{fig:matrix_tree}, we illustrate the directed Matrix-Tree Theorem for a cluster of 3 nodes, i.e., $V_k = \{1, 2, 3\}$ with directed edge weights $w_{ij}$.
Considering root $r=1$ (red), there are exactly three directed spanning trees rooted at $1$, each with edges oriented away from the root. We have $\sum_{T\in\mathcal{T}_1}\prod_{(i,j)\in T} w_{ij}\;=\; w_{12}w_{13} + w_{12}w_{23} + w_{13}w_{32}\;=\; \big|L_{V_k}[1]\big|$ where 
\(
\quad L_{V_k} =
    \begin{pmatrix}
      w_{12}+w_{13} & -w_{12} & -w_{13} \\
      -w_{21} & w_{21}+w_{23} & -w_{23} \\
      -w_{31} & -w_{32} & w_{31}+w_{32}
    \end{pmatrix}.
\)
}

\begin{figure}[ht]
\centering
\resizebox{\linewidth}{!}{
\begin{tikzpicture}[
    scale=0.99,
    tnode/.style={circle, draw=nodecolor, fill=white, thick,
                  minimum size=6mm, font=\scriptsize},
    rnode/.style={circle, draw=rootcolor, fill=white, very thick,
                  minimum size=6mm, font=\scriptsize},
    >=Stealth,
    lbl/.style={font=\scriptsize, midway, fill=white, inner sep=1pt}
]

\begin{scope}[xshift=0.2cm, yshift=0cm]
  \node[rnode] (g1) at ( 0.0,  2.1) {$y_1$};
  \node[tnode] (g2) at (-1.3,  0.0) {$y_2$};
  \node[tnode] (g3) at ( 1.3,  0.0) {$y_3$};

  \draw[->, thick, edgecolor, bend right=18] (g1) to (g2);
  \draw[->, thick, edgecolor, bend right=18] (g2) to (g1);
  \draw[->, thick, edgecolor, bend left=18] (g1) to (g3);
  \draw[->, thick, edgecolor, bend left=18] (g3) to (g1);
  \draw[->, thick, edgecolor, bend left=18] (g2) to (g3);
  \draw[->, thick, edgecolor, bend left=18] (g3) to (g2);

\end{scope}

\draw[->, very thick, gray!60, line width=1.4pt] (1.9, 1.05) -- (2.5, 1.05);

\begin{scope}[xshift=3.7cm, yshift=0cm]
  \node[rnode] (a1) at ( 0.0,  1.7) {$y_1$};
  \node[tnode] (a2) at (-0.95,-0.1) {$y_2$};
  \node[tnode] (a3) at ( 0.95,-0.1) {$y_3$};
  \draw[->, thick, edgecolor] (a1) -- node[lbl,left=1pt]  {$w_{12}$} (a2);
  \draw[->, thick, edgecolor] (a1) -- node[lbl,right=1pt] {$w_{13}$} (a3);
\end{scope}

\node[font=\large] at (5.15, 0.8) {$+$};

\begin{scope}[xshift=6.8cm, yshift=0cm]
  \node[rnode] (b1) at ( 0.0,  1.7) {$y_1$};
  \node[tnode] (b2) at (-0.95,-0.1) {$y_2$};
  \node[tnode] (b3) at ( 0.95,-0.1) {$y_3$};
  \draw[->, thick, edgecolor] (b1) -- node[lbl,left=1pt]  {$w_{12}$} (b2);
  \draw[->, thick, edgecolor] (b2) -- node[lbl,below=1pt] {$w_{23}$} (b3);
\end{scope}

\node[font=\large] at (8.25, 0.8) {$+$};

\begin{scope}[xshift=9.9cm, yshift=0cm]
  \node[rnode] (c1) at ( 0.0,  1.7) {$y_1$};
  \node[tnode] (c2) at (-0.95,-0.1) {$y_2$};
  \node[tnode] (c3) at ( 0.95,-0.1) {$y_3$};
  \draw[->, thick, edgecolor] (c1) -- node[lbl,right=1pt] {$w_{13}$} (c3);
  \draw[->, thick, edgecolor] (c3) -- node[lbl,below=1pt] {$w_{32}$} (c2);
\end{scope}

\end{tikzpicture}
}
\caption{
  The directed matrix-tree theorem for a cluster of 3 nodes.
}
\label{fig:matrix_tree}
\end{figure}

\section{Proofs of results and specific examples in the main paper}

\begin{proof}[Proof of Lemma~3.3]

Let $\mathcal V^{K,n}:=\{(V^n_1,\ldots,V^n_K):\bigcup_{k=1}^KV^n_k=[n],V^n_i\bigcap V^n_j=\emptyset,\\\forall i\not=j\}$ be the set of all possible unordered partitions of $y^{(n)}$ into $K$ clusters. For notational convenience, let $n_{0,i}:=|V^{0,n}_i|$ and $n_i:=|V^n_i|$. Let $W^n_{ij}=V^n_i\bigcap V^n_{0,j}$ with $|W^n_{ij}|=m_{ij}$. For any $i\in[K]$, let $a_i$ denote the number of non-empty $W^n_{ij}(j\in[K_0])$; similarly, for any $j\in[K_0]$, let $b_j$ denote the number of non-empty $W^n_{ij}(i\in[K])$. For any non-empty $W^n_{ij}$, we form a Laplacian and we denote its eigenvalues by $\{\lambda_{ijk}\}_{k=1}^{m_{ij}}$. Let $K^*$ be the number of non-empty $W^n_{ij}$'s. Clearly, we have $K^*=\sum_{i=1}^Ka_i=\sum_{j=1}^{K_0}b_j$. In the following, for simplicity, we drop the superscript $n$ in $V_k^{0,n}$, $V_k^{n}$, and $y^{(n)}$ and the subscript $n$ in $K_{0,n}$, $\delta_n$ and $\lambda_n$. Set $\varepsilon:=\sup_{s\not\sim t;s,t\in[n]}f^{(n)}_{st}$ and $\gamma:=\inf_{s'\sim t';s',t'\in[n]}f^{(n)}_{s't'}$ where equivalence relation ``$\sim$" is under the oracle clustering. Then we can write
\[\label{ineq:upperbound}
\begin{aligned}
	&\frac{\Pi^*(\mathcal V_n=(V_1,\ldots,V_K)|y)}{\Pi^*(\mathcal V_n=(V_{1}^0,\ldots,V^0_{K_0})|y)}\\
 =&\frac{C_n\prod_{k=1}^K\left[\lambda\cdot\left(\sum_{\text{all }E_k}\prod_{(i,j)\in E_k}f(y_i|y_j)\right)\cdot\left(\sum_{i\in V_k}r(y_i|y_0)\right)\right]}{C_n\prod_{k=1}^{K_0}\left[\lambda\cdot\left(\sum_{\text{all }E_k}\prod_{(i,j)\in E_k}f(y_i|y_j)\right)\cdot\left(\sum_{i\in V^{0}_k}r(y_i|y_0)\right)\right]}\\
	\leq& \frac{\prod_{i=1}^{K}\left[\lambda\cdot\left|L_{V_i}[1]\right|\cdot n_{i}C_2\delta\right]}{\prod_{j=1}^{K_0}\left[\lambda\cdot\left|L_{V^{0}_j}[1]\right|\cdot n_{0,j}C_1\delta\right]}
	\stackrel{(a)}{=}(\delta\lambda)^{K-K_0}\cdot \frac{C_2^{K}}{C_1^{K_0}}\cdot\frac{\prod_{i=1}^{K}\left|L_{V_{i}}+n_i^{-1}J\right|}{\prod_{j=1}^{K_0}\left|L_{V_{j}^0}+n_{0,j}^{-1}J\right|}\\
	=&(\delta\lambda)^{K-K_0}\cdot \frac{C_2^{K}}{C_1^{K_0}}\cdot\left(\prod_{i=1}^K\frac{|L_{V_i}+n_i^{-1}J|}{\prod_{j=1}^{K_0}|L_{W_{ij}}+m_{ij}^{-1}J|}\right)\cdot\left(\prod_{j=1}^{K_0}\frac{\prod_{i=1}^{K}|L_{W_{ij}}+m_{ij}^{-1}J|}{|L_{V_{j}^0}+n_{0,j}^{-1}J|}\right),
\end{aligned}
\]
where notationally $|L_{W_{ij}}+m_{ij}^{-1}J|\stackrel{\Delta}{=}1$ if $W_{ij}$ is an empty set, and $(a)$ is due to (9) in the main paper.

First, we consider $K_0=1$; that is, $(V^0_1=\{1,\ldots,n\})$ is the oracle clustering. By \eqref{ineq:upperbound},
\(
\begin{aligned}
\frac{\Pi^*(\mathcal V_n\not=(V^0_1)|y)}{\Pi(\mathcal V_n\sim(V^0_1)|y)}&= \sum_{K=2}^n\sum_{(V_1,\ldots,V_K)\in \mathcal V^{K,n}}\frac{\Pi^*(\mathcal V_n=(V_1,\ldots,V_K)|y)}{1\cdot\Pi^*(\mathcal V_n=(V^0_1)|y)}\\
&\stackrel{(a)}{\leq} \sum_{K=2}^nK^n\max_{(V_1,\ldots,V_K)\in\mathcal V^{K,n}}\frac{\Pi^*(\mathcal V_n=(V_1,\ldots,V_K)|y)}{\Pi^*(\mathcal V_n=(V^0_1)|y)}\\
&\stackrel{(b)}{\leq} \sum_{K=2}^n K^n(\delta\lambda)^{K-1}\cdot\frac{C_2^K}{C_1}\cdot \frac{\prod_{i=1}^K|L_{V_i}+n_i^{-1}J|}{|L_n+n^{-1}J|}\\
&\stackrel{(c)}{\leq} \sum_{K=2}^n K^n(\delta\lambda)^{K-1}\cdot\frac{C_2^K}{C_1}\cdot \left(\frac{1}{n\gamma}\right)^{K-1}\\
&\stackrel{(d)}{\precsim} \sum_{K=2}^n \exp\left\{-[(K-1)\log(2+\iota_2)-\log(K)]n+(K-1)\log\left(\frac{C_2}{n}\right)\right\}\\
&\precsim \sum_{K=2}^n \exp\left\{-[(K-1)\log(2+\iota_2)-\log(K)]n\right\}\\
&\stackrel{(e)}{\leq} \frac{n-1}{(1+\iota_2/2)^n}\to0\text{ as }n\to\infty,
\end{aligned}
\)
where $(a)$ uses the fact that the total number of ways of assigning $n$ points into $K$ clusters is no greater than $K^n$; $(b)$ uses \eqref{ineq:upperbound}; $(c)$ invokes Lemma~5.7 in the main paper; $(d)$ is by the second condition of $\mathcal D^{(\infty)}$; $(e)$ uses the fact that the function $g_1(K)=(K-1)\log(2+\iota_2)-\log(K)$ is an increasing function in $K$. 

In the following, we consider $K_0>1$. We will obtain bounds for the two products in the last line of \eqref{ineq:upperbound}. Since $y^{(\infty)}\in\mathcal D^{(\infty)}$, we have
\[
\label{ineq:ratio_eps_gamma}	
\frac{\varepsilon}{\gamma}=\frac{\varepsilon}{\delta\lambda}\frac{\delta\lambda}{\gamma}\precsim [(K_0-1+\iota_1)(K_0+1+\iota_2)]^{-n}.
\]
Since the right-hand side tends to zero as $n\to \infty$, we must have $\varepsilon<\gamma$ for $n$ sufficiently large.

	For the first product in the last line of \eqref{ineq:upperbound}, we observe that for the refinement of $V_i$, $(W_{ij})_{j\in[K_0]}$, it holds for sufficiently large $n$ that
	\(
 \begin{aligned}
	\sup_{s\not\sim t\text{ under }(W_{ij})_{j\in[K_0]}}f^{(n)}_{st}\leq& \sup_{s\not\sim t\text{ under }(V^0_{j})_{j\in[K_0]}}f^{(n)}_{st}\leq \varepsilon\\
 &<\gamma\leq\inf_{s'\sim t'\text{ under }(V^0_{j})_{j\in[K_0]}} f^{(n)}_{s't'}\leq\inf_{s'\sim t'\text{ under }(W_{ij})_{j\in[K_0]}} f^{(n)}_{s't'}.
 \end{aligned}
	\)
	Thus we can apply Lemma~5.5 in the main paper and get
	\[\label{ineq:use_lemma_1}
	\prod_{i=1}^K\frac{|L_{V_i}+n_i^{-1}J|}{\prod_{j=1}^{K_0}|L_{W_{ij}}+m_{ij}^{-1}J|}\leq\prod_{i=1}^K\left\{(n_i\varepsilon)^{a_i-1}\prod_{j=1}^{K_0}\prod_{k=2}^{m_{ij}}\left(1+\frac{(n_i-m_{ij})\varepsilon}{\lambda_{ijk}}\right)\right\},
	\]
	where we let $\prod_{k=2}^0\stackrel{\Delta}{=}\prod_{k=2}^1\stackrel{\Delta}{=}1$.

	To bound $\lambda_{ijk}$, we first note that $L_{W_{ij}}=L_{\gamma}+L_\Delta$ where $L_\gamma$ is a Laplacian generated by $n$ nodes with edge weights all equal to $\gamma$ and $L_{\Delta}=L_{W_{ij}}-L_\gamma$ is a nonnegative definite matrix due to the fact that $\inf_{s,t\in W_{ij}}f^{(n)}_{st}\geq\gamma$. Due to Weyl's Inequality, it holds that
	\[
	\label{ineq:lambda_ijk}
	\lambda_{ijk}\geq\lambda_k(L_{\gamma})+\lambda_{\min}(L_\Delta)\geq\lambda_k(L_\gamma)=m_{ij}\gamma,\quad\forall 2\leq k\leq m_{ij}.
	\]
	
	It follows that
	\(
		\begin{aligned}
			&\prod_{i=1}^K\prod_{j=1}^{K_0}\prod_{k=2}^{m_{ij}}\left[1+\frac{(n_i-m_{ij})\varepsilon}{\lambda_{ijk}}\right]\\&\stackrel{(a)}{\leq}\prod_{i=1}^K\prod_{j=1}^{K_0}\prod_{k=2}^{m_{ij}}\left[1+\frac{(n_i-m_{ij})\varepsilon}{m_{ij}\gamma}\right]
			=\prod_{i=1}^K\prod_{j=1}^{K_0}\left[1+\frac{(n_i-m_{ij})\varepsilon}{m_{ij}\gamma}\right]^{m_{ij}-1}\\
			&\stackrel{(b)}{\leq}\exp\left\{\sum_{i=1}^K\sum_{j=1}^{K_0}(m_{ij}-1)\frac{(n_i-m_{ij})\varepsilon}{m_{ij}\gamma}\right\}
			<\exp\left\{\frac{\varepsilon}{\gamma}\sum_{i=1}^K\sum_{j=1}^{K_0}(n_i-m_{ij})\right\}\\
			&\leq\exp\left\{(K_0-1)\frac{n\varepsilon}{\gamma}\right\},
		\end{aligned}	
	\)
	where $(a)$ uses \eqref{ineq:lambda_ijk} and $(b)$ is due to the fact that $(1+x)\leq \exp(x)$ for $x>-1$.

	Hence, we have
	\[
	\label{ineq:first_prod}
 \begin{aligned}
	\prod_{i=1}^K\frac{|L_{V_i}+n_i^{-1}J|}{\prod_{j=1}^{K_0}|L_{W_{ij}}+m_{ij}^{-1}J|}\leq&\prod_{i=1}^K(n_i\varepsilon)^{a_i-1}\cdot\exp\left\{(K_0-1)\frac{n\varepsilon}{\gamma}\right\}\\
 \leq&(n\varepsilon)^{K^*-K}\cdot\exp\left\{(K_0-1)\frac{n\varepsilon}{\gamma}\right\}.
 \end{aligned}
	\]
	For the second product in the last line of \eqref{ineq:upperbound}, it follows from Lemma~5.7 in the main paper that
	\[
	\label{ineq:second_prod}
	\prod_{j=1}^{K_0}\frac{\prod_{i=1}^{K}|L_{W_{ij}}+m_{ij}^{-1}J|}{|L_{V_{0,j}}+n_{0,j}^{-1}J|}\leq\prod_{j=1}^{K_0}\left(\frac{1}{n_{0,j}\gamma}\right)^{b_j-1}\leq\prod_{j=1}^{K_0}\frac{1}{\gamma^{b_j-1}}=\frac{1}{\gamma^{K^*-K_0}}.
	\]
	
	Combining \eqref{ineq:upperbound}, \eqref{ineq:first_prod}, and \eqref{ineq:second_prod}, we have
	\[
		\label{ineq:single_ratio}
		\begin{aligned}
			&\frac{\Pi^*(\mathcal V_n=(V_1,\ldots,V_K)|y)}{\Pi^*(\mathcal V_n=(V_{1}^0,\ldots,V_{K_0}^0)|y)}\\
   \leq&(\delta\lambda)^{K-K_0}\cdot\frac{C_2^K}{C_1^{K_0}}\cdot\frac{(n\varepsilon)^{K^*-K}}{\gamma^{K^*-K_0}}\exp\left\{(K_0-1)\frac{n\varepsilon}{\gamma}\right\}\\
			\precsim& (\delta\lambda)^{K-K_0}\cdot\frac{C_2^K}{C_1^{K_0}}\cdot\frac{(n\varepsilon)^{K^*-K}}{\gamma^{K^*-K_0}}\\
			=&\exp\left\{(K-K_0)\log\left(\frac{\delta\lambda}{\gamma}\right)+(K^*-K)\log\left(\frac{n\varepsilon}{\gamma}\right)+K\log(C_2)-K_0\log(C_1)\right\},
		\end{aligned}
	\]
	where {the constant omitted in the $\precsim$ step above can be $2$ because as $n\to\infty$,
	\[\label{ineq:converge}
	(K_0-1)n\varepsilon/\gamma\precsim n^{3/2}(1+\iota_1)^{-n}\to0,\qquad \exp\left\{(K_0-1)\frac{n\varepsilon}{\gamma}\right\}\to 1.
	\]}

	Next, we focus on the following summations:
	\[
		\label{eq:three_sums}
		\begin{aligned}
			&\frac{\Pi(\mathcal V_n\not\sim(V^0_{1},\ldots,V^0_{K_0})|y)}{\Pi(\mathcal V_n\sim(V^0_{1},\ldots,V^0_{K_0})|y)}\\
   =&\left(\sum_{\mathcal V_n:K<K_0}+\sum_{\mathcal V_n:K=K_0,\atop\mathcal V_n\not\sim(V^0_{1},\ldots,V^0_{K_0})}+\sum_{\mathcal V_n:K>K_0}\right)\frac{\Pi^*(\mathcal V_n=(V_1,\ldots,V_K)|y)}{K_0!\cdot\Pi^*(\mathcal V_n=(V^0_{1},\ldots,V^0_{K_0})|y)}.
		\end{aligned}	
	\]
	We denote these summations by $S_1$, $S_2$, and $S_3$, respectively. 
    First,
\[
\label{ineq:S_1}
\begin{aligned}
S_1&\stackrel{(a)}{\leq}\sum_{K=1}^{K_0-1} \frac{K^n}{K_0!} \max_{(V_1,\ldots,V_K)\in\mathcal V^{n,K}} \frac{\Pi^*(\mathcal V_n=(V_1,\ldots,V_K)|y)}{\Pi^*(\mathcal V_n=(V^0_{1},\ldots,V^0_{K_0})|y)}\\
&\stackrel{(b)}{\precsim}\sum_{K=1}^{K_0-1}\exp\bigg\{(K_0-K)\log\left(\frac{\varepsilon}{\delta\lambda}\right)+(K^*-K_0)\log\left(\frac{n\varepsilon}{\gamma}\right)+(K_0-K)\log(n)\\
&\qquad\qquad\qquad+K\log(C_2)-K_0\log(C_1)+n\log(K)-\log(K_0!)\bigg\}\\
&\stackrel{(c)}{\precsim}\sum_{K=1}^{K_0-1}\exp\left\{(K_0-K)\log\left(\frac{\varepsilon}{\delta\lambda}\right)+(K^*-K_0)\log\left(\frac{n\varepsilon}{\gamma}\right)+2K_0\log(n)+n\log(K)\right\}\\
&\stackrel{(d)}{\precsim} \sum_{K=1}^{K_0-1}\exp\left\{(K_0-K)\log\left(\frac{\varepsilon}{\delta\lambda}\right)+2K_0\log(n)+n\log(K)\right\}\\
&\stackrel{(e)}{\precsim}\sum_{K=1}^{K_0-1}\exp\left\{-[(K_0-K)\log(K_0-1+\iota_1)-\log(K)]n+2K_0\log(n)\right\}\\
&\stackrel{(f)}{\le}(K_0-1)\exp\left\{-n\log\left(\frac{K_0-1+\iota_1}{K_0-1}\right)+2K_0\log(n)\right\}\to0\text{ as }n\to\infty,
\end{aligned}
\]
where $(a)$ is due to that the total number of ways of assigning $n$ points into $K$ clusters is $\le K^n$; $(b)$ uses \eqref{ineq:single_ratio}; in $(c)$, we upperbound the rest  by $K_0\log(n)$; $(d)$ is due to $K^*\geq K_0$ and $\log(\sfrac{n\varepsilon}{\gamma})\leq 0$ for sufficiently large $n$ ; $(e)$ uses the first condition of $\mathcal D^{(\infty)}$ which will contribute some fixed constants by definition; $(f)$ is because $g_2(K)=(K_0-K)\log(1+\iota_1)-\log(K)$ is decreasing for $K\geq1$. If $K_0$ is bounded the convergence holds trivially; otherwise, we have $[(K_0-1+\iota_1)/(K_0-1)]^{-n}\asymp \exp(-\iota_1n/(K_0-1))$ and $2K_0\log(n)=o(n/(K_0-1))$ due to Assumption~2 in the main paper, which lead to the convergence. { The constants omitted in $\precsim$ steps can be $2$, $1$, and $1$, respectively, in steps $(b)$, $(c)$, and $(d)$ for sufficiently large $n$.}

Second, following similar ideas, we have {for sufficiently large $n$,
\[
\label{ineq:S_2}
\begin{aligned} 
	S_2&\leq \frac{K_0^n}{K_0!}\max_{(V_1,\ldots,V_{K_0})\in\mathcal V^{n,K_0}\atop (V_1,\ldots,V_{K_0})\not\sim(V^0_1,\ldots,V^0_{K_0})} \frac{\Pi^*(\mathcal V_n=(V_1,\ldots,V_K)|y)}{\Pi^*(\mathcal V_n=(V_{1}^0,\ldots,V^0_{K_0})|y)}\\
	&\le 2\exp\left\{(K^*-K_0)\log\left(\frac{n\varepsilon}{\gamma}\right)+K_0\log(C_2)-K_0\log(C_1)+n\log(K_0)-\log(K_0!)\right\}\\
	&\le 2C^{(2)} \left(\frac{n\varepsilon}{\gamma}\right)^{K^*-K_0}K_0^n\\
	&\stackrel{(b)}{\le} 2C^{(2)}\left(\frac{n\varepsilon}{\gamma}\right)K_0^n\\
	&\precsim 2C^{(2)}n\left(\frac{K_0}{(K_0-1+\iota_1)(K_0+1+\iota_2)}\right)^{n}\to0\text{ as }n\to\infty,\\
	\end{aligned}	
\]
where $C^{(2)}=\max_{t\in\mathbb N^+}(C_2/C_1)^t/t!$, and $(b)$ uses $n\varepsilon/\gamma\to0$ as $n\to\infty$, and the fact that if $|\mathcal V_n|=K_0$, then $K^*=K_0$ if and only if $\mathcal V_n\sim(V_{0,1},\ldots,V_{0,K_0})$. The last $\precsim$ step is due to \eqref{ineq:ratio_eps_gamma} and only some fixed constant is induced. 

Third, following similar ideas, we have for sufficiently large $n$,
\[
\label{ineq:S_3}
\begin{aligned}
	S_3&\precsim \sum_{K=K_0+1}^n\exp\left\{(K-K_0)\log\left(\frac{\delta\lambda}{\gamma}\right)+(K^*-K)\log\left(\frac{n\varepsilon}{\gamma}\right)+K\log(C_2)+n\log(K)\right\}\\
	&\leq\sum_{K=K_0+1}^n \exp\left\{(K-K_0)\log\left(\frac{\delta\lambda}{\gamma}\right)+K\log(C_2)+n\log(K)\right\}\\
	&\stackrel{(a)}{\precsim} \sum_{K=K_0+1}^n \exp\left\{-[(K-K_0)\log(K_0+1+\iota_2)-\log(K)]n+K\log(C_2)\right\}\\
	&\stackrel{(b)}{\le} (n-K_0)\exp\left\{-n\log\left(\frac{K_0+1+\iota_2}{K_0+1}\right)+(K_0+1)\log(C_2)\right\}\to0\text{ as }n\to\infty,
\end{aligned}
\]
where the first two steps follow the similar idea as in the first few steps in \eqref{ineq:S_1} and \eqref{ineq:S_2}, $(a)$ uses the second condition of $\mathcal D^{(\infty)}$ which will contribute some fixed constants by definition, and $(b)$ is due to the following argument. 
Define the function $g_3(K)=-[(K-K_0)\log(K_0+1+\iota_2)-\log(K)]n+K\log(C_2)$. It has derivative $g_3'(K)=-[\log(K_0+1+\iota_2)-1/K]n+\log(C_2)\leq0$ for $K\geq K_0+1$ and sufficiently large $n$. Hence, $g_3(K)\leq g_3(K_0+1)=-n\log\left(\sfrac{(K_0+1+\iota_2)}{(K_0+1)}\right)+(K_0+1)\log(C_2)$. The convergence to zero holds due to Assumption~2 in the main paper.}

Combining \eqref{eq:three_sums}, \eqref{ineq:S_1}, \eqref{ineq:S_2}, and \eqref{ineq:S_3}, we have
\(
\begin{aligned}
	\Pi(\mathcal V_n\not\sim(V_{0,1},\ldots,V_{0,K_0})|y)&=\Pi(\mathcal V_n\sim(V_{0,1},\ldots,V_{0,K_0})|y)(S_1+S_2+S_3)\to0.	
\end{aligned}
\)

\end{proof}

\begin{proof}[Proof of Lemma~3.5]

\(
\begin{aligned}
P_0^{(n)}\left(y^{(n)}\not\in\mathcal D^{(n)}_\phi\right)
&=P_0^{(n)}\left(\left(\bigcup_{s\not\sim t;s,t\in[n]}\left\{d_{st}^2< a_n\right\}\right)\bigcup\left(\bigcup_{s'\sim t';s',t'\in[n]}\left\{d_{s't'}^2> b_n\right\}\right)\right)\\
&\quad\leq\sum_{s\not\sim t;s,t\in[n]}P_0^{(n)}(d^2_{st}<a_n)+\sum_{s'\sim t';s',t'\in[n]}P_0^{(n)}(d^2_{s't'}>b_n)\\
&\quad\leq\left(\sum_{1\leq i<j\leq K_{0,n}}n_in_j\right)\cdot\max_{s\not\sim t;s,t\in[n]}P_0^{(n)}(d^2_{st}<a_n)\\
&\qquad\qquad\qquad+\left(\sum_{k=1}^{K_{0,n}}{n_k\choose 2}\right)\cdot\max_{s'\sim t';s',t'\in[n]}P_0^{(n)}(d^2_{s't'}>b_n)\\
&\quad\leq n^2\cdot\max_{s\not\sim t;s,t\in[n]}P_0^{(n)}(d^2_{st}<a_n)+n^2\cdot\max_{s'\sim t';s',t'\in[n]}P_0^{(n)}(d^2_{s't'}>b_n)\\
&\quad=n^2\cdot\sup_{k\not=\ell;k,\ell\in[K_{0,n}]}P(D_{k\ell}^2<a_n)+n^2\cdot\sup_{k'\in[K_{0,n}]}P(D_{k'k'}^2>b_n).
\end{aligned}
\)
\end{proof}
{ This proves the required condition for clustering consistency as established in Theorem 3.4.}

\begin{proof}[Proof of Lemma~3.6]
\[
\label{ineq:E_misclass_rate}
&\mathbb{E}(d(\mathcal V_n,(V_1^{0,n},\ldots,V_{K_0}^{0,n}))\mid y^{(n)})\\
=&\Pi^*(\mathcal V_n=(V_1^{0,n},\ldots,V_{K_0}^{0,n}))|y^{(n)})\cdot\sum_{r=1}^n\sum_{\mathcal V^*_n: d(\mathcal V^*_n,(V_1^{0,n},\ldots,V_{K_0}^{0,n}))=r} \frac{r\Pi^*(\mathcal V_n=\mathcal V^*_n \mid y^{(n)})}{\Pi^*(\mathcal V_n=(V_1^{0,n},\ldots,V_{K_0}^{0,n})|y^{(n)})}\\
\le&\sum_{r=1}^n\sum_{\mathcal V^*_n: d(\mathcal V^*_n,(V_1^{0,n},\ldots,V_{K_0}^{0,n}))=r} \frac{r\Pi^*(\mathcal V_n=\mathcal V^*_n \mid y^{(n)})}{\Pi^*(\mathcal V_n=(V_1^{0,n},\ldots,V_{K_0}^{0,n})|y^{(n)})}\\
\le& nK_0^n\max_{\mathcal V_n^*:|\mathcal V^*_n|=K_0,\mathcal V_n^*\not\sim(V_1^{0,n},\ldots,V_{K_0}^{0,n})}\frac{\Pi^*(\mathcal V_n=\mathcal V^*_n \mid y^{(n)})}{\Pi^*(\mathcal V_n=(V_1^{0,n},\ldots,V_{K_0}^{0,n}))|y^{(n)})}.
\]

It follows from \eqref{ineq:E_misclass_rate}, \eqref{ineq:single_ratio}, and the assumption that $\varepsilon_n/\gamma_n=o(1/n)$ that
\(
\mathbb{E}(d(\mathcal V_n,(V_1^{0,n},\ldots,V_{K_0}^{0,n}))\mid y^{(n)})\precsim&\exp\left\{\log\left(\frac{\varepsilon_n}{\gamma_n}\right)+n\log(K_0)+2\log(n)+K_0\log\left(\frac{C_2}{C_1}\right)\right\}\\
\precsim& \exp\left\{\log\left(\frac{\varepsilon_n}{\gamma_n}\right)+n\log(K_0+1)\right\}.
\)

Lastly, for the Gaussian-BSF model, plugging the specific form of $f^{(n)}_{st}$ in the above finishes the proof.
\end{proof}

\begin{proof}[Proof of Theorem~3.7]
Let $F_n:=\mathbb{E}(d_H(\mathcal V_n,(V_1^{0,n},\ldots,V_{K_{0,n}}^{0,n}))\mid y^{(n)})$, and let $\tilde{\mathcal D}:=\{y^{(n)}:\\\inf_{s\not\sim t}d^2_{st}\ge a'_n,\sup_{s'\sim t'}d^2_{s't'}\le b'_n\}$. Note that $F_n\le n$. Then
\(
\mathbb E_{P^{(n)}_0}[F_n]&=\mathbb E_{P^{(n)}_0}[F_n1_{\tilde {\mathcal D}^c}]+\mathbb E_{P^{(n)}_0}[F_n1_{\tilde{\mathcal D}}]\\
&\le nP^{(n)}_0(y^{(n)}\not\in\tilde{\mathcal D})+\mathbb E_{P^{(n)}_0}[F_n1_{\tilde{\mathcal D}}]\\
&\precsim nP^{(n)}_0(y^{(n)}\not\in\tilde{\mathcal D}) + \exp\left\{-\frac{a'_n-b'_n}{2\sigma_n^2}+n\log(K_{0,n}+1)\right\}\\
&\le n^3\left[\sup_{k\not=\ell;k,\ell\in[K_{0,n}]}P(D^2_{k\ell}<a'_n)+\sup_{k'\in[K_{0,n}]}P(D^2_{k'k'}>b'_n)\right]\\
&\quad +\exp\left\{-\frac{a'_n-b'_n}{2\sigma_n^2}+n\log(K_{0,n}+1)\right\}.
\)

\end{proof}

\begin{proof}[Proof of Theorem~4.2]

Take $\phi=(1,1,1,\iota/2)$. We have

\(
\begin{cases}
a_n=2\sigma_n^2\left[n\log(K_{0,n})+\log(\rho_n)-p\log(\sqrt{2\pi})\right],\\
b_n=2\sigma_n^2\left[-n\log(K_{0,n}+1+\iota/2)+\log(\rho_n)-p\log(\sqrt{2\pi})\right].
\end{cases}
\)

By assumptions (ii), we have
\(
a_n\asymp b_n\asymp \sigma_n^2\log(\rho_n).
\)

For $k,\ell\in[K_{0,n}]$ with $k\not=\ell$, let $\Delta_{kl}\sim\Normal(\mu_k-\mu_\ell,\Sigma_k+\Sigma_\ell)$ and $\Delta_{kk}\sim\Normal(0,2\Sigma_k)$.
Next, we prove that both (7) and (8) in the main paper hold.

\begin{itemize}
\item Since $D^2_{\mu,\min}/[\sigma^2_n\log(\rho_n)]\to\infty$ as $n\to\infty$, it follows that $0\leq\sfrac{a_n}{\|\mu_k-\mu_\ell\|^2_2}\leq\sfrac{a_n}{D^2_{\mu,\min}}\to0$ as $n\to\infty$ for any $k\not=\ell\in[K_{0,n}]$. Hence, for sufficiently large $n$, if $D^2_{k\ell}=\|\Delta_{k\ell}\|_2^2<a_n$, then $\|\Delta_{k\ell}-\mu_{k}+\mu_{\ell}\|_2^2>D^2_{\mu,\min}/2$. On the other hand, we have 
\(
\begin{aligned}
&\|\Delta_{k\ell}-\mu_{k}+\mu_{\ell}\|_2^2\\
\leq&\frac{(\Delta_{k\ell}-\mu_{k}+\mu_{\ell})^T(\Sigma_{k}+\Sigma_{\ell})^{-1}(\Delta_{k\ell}-\mu_{k}+\mu_{\ell})}{\lambda_{\min}((\Sigma_{k}+\Sigma_{\ell})^{-1})}\\
\leq&2\Lambda_{\max}(\Delta_{k\ell}-\mu_{k}+\mu_{\ell})^T(\Sigma_{k}+\Sigma_{\ell})^{-1}(\Delta_{k\ell}-\mu_{k}+\mu_{\ell}).
\end{aligned}
\)

It follows that
\[
\label{ineq:cond1}
\begin{aligned}
&\max_{k\not=\ell;k,\ell\in[K_{0,n}]}P(D^2_{k\ell}<a_n)
\leq  \max_{k\not=\ell;k,\ell\in[K_{0,n}]}P(\|\Delta_{k\ell}-\mu_k+\mu_\ell\|_2^2>D^2_{\mu,\min}/2)\\
\leq&P\left((\Delta_{k\ell}-\mu_{k}+\mu_{\ell})^T(\Sigma_{k}+\Sigma_{\ell})^{-1}(\Delta_{k\ell}-\mu_{k}+\mu_{\ell})>\frac{D^2_{\mu,\min}}{4\Lambda_{\max}}\right)\\
\stackrel{(a)}{\leq}&\exp\left\{-\frac{D^2_{\mu,\min}}{8\Lambda_{\max}}+\frac{p}{2}\log\left(\frac{D^2_{\mu,\min}}{8p\Lambda_{\max}}\right)-\frac{p}{2}\right\}\\
\stackrel{(b)}{\precsim}&\exp\left\{-\frac{D^2_{\mu,\min}}{16\Lambda_{\max}}\right\}\\
\stackrel{(c)}{\precsim}&1/n^3,
\end{aligned}
\]
where $(a)$ is due to Lemma~4.1 in the main paper and $D^2_{\mu,\min}/p\Lambda_{\max}\to\infty$, and $(b),(c)$ are due to the assumption that $D^2_{\mu,\min}/\Lambda_{\max}\log(n)\to\infty$ as $n\to\infty$.

\item Following a similar idea as above, we have
\[
\label{ineq:cond2}
\begin{aligned}
&\max_{k'\in[K_{0,n}]}P(D^2_{k'k'}>b_n)\\
\leq&\max_{k'\in[K_{0,n}]}P\left(\Delta_{k'k'}^T(2\Sigma_{k'})^{-1}\Delta_{k'k'}>\frac{b_n}{2\Lambda_{\max}}\right)\\
\leq&\exp\left\{-\frac{b_n}{4\Lambda_{\max}}+\frac{p}{2}\log\left(\frac{b_n}{2p\Lambda_{\max}}\right)-\frac{p}{2}\right\}\\
\precsim&\exp\left\{-\frac{\sigma^2_n\log(\rho_n)}{8\Lambda_{\max}}\right\}\\
\precsim&1/n^3.
\end{aligned}
\]

\end{itemize}

Combining \eqref{ineq:cond1} and \eqref{ineq:cond2} and invoking Theorem~3.5 in the main paper finishes the proof.

\end{proof}

\begin{proof}[Proof of Theorem~4.4]
Take $\phi=(1,1,1,\iota/2)$. We have 
\(
\begin{cases}
a_n=2\sigma_n^2\left[n\log(K_{0,n})+\log(\delta_n\lambda_n\rho_n)\right],\\
b_n=2\sigma_n^2\left[-n\log(K_{0,n}+1+\iota/2)+\log(\delta_n\lambda_n\rho_n)\right].
\end{cases}
\)

Following the same argument in the proof of Theorem~4.2, we have
\(
a_n\asymp b_n\asymp \sigma_n^2\log(\rho_n),
\)
and for any $k,\ell\in[K_{0,n}]$,
\[
\label{ineq:a_n_slower_D}
0\leq\frac{\sqrt{a_n}}{d(\mu_k,\mu_\ell)}\leq\frac{\sqrt{a_n}}{D_{\mu,\min}}\asymp \sqrt{\frac{\sigma_n^2\log(\rho_n)}{D^2_{\mu,\min}}}\to0\text{ as }n\to\infty.
\]

\begin{itemize}
\item First, consider $P(D^2_{k\ell}<a_n)$ for $k,\ell\in[K_{0,n}]$ with $k\not=\ell$. Let $X_k\stackrel{\text{indep}}{\sim}G_k^0$ for $k\in[K_{0,n}]$.
It is not hard to see that
\(
d(X_k,X_\ell)\geq d(\mu_{k},\mu_{\ell}) - d(\mu_{k},X_k)-d(\mu_{\ell},X_\ell).
\)

Hence,
\(
\begin{aligned}
&P(D^2_{k\ell}< a_n)\\
\leq& P(d(\mu_{k},\mu_{\ell}) - d(\mu_{k},X_k)-d(\mu_{\ell},X_\ell) < \sqrt{a_n}) \\
\leq& P\left(d(\mu_{k},X_k)> \frac{d(\mu_{k},\mu_{\ell})-\sqrt{a_n}}{2}\right)+P\left(d(\mu_{\ell},X_\ell)> \frac{d(\mu_{k},\mu_{\ell})-\sqrt{a_n}}{2}\right)\\
\leq& 2\max_{k'\in[K_{0,n}]}P\left(d(X_{k'},\mu_{k'})> \frac{D_{\mu,\min}-\sqrt{a_n}}{2}\right).
\end{aligned}
\)

Due to \eqref{ineq:a_n_slower_D},  we have for sufficiently large $n$, $(D_{\mu,\min}-\sqrt{a_n})/2\geq D_{\mu,\min}/4$, which leads to
\(
\begin{aligned}
P(D^2_{k\ell}< a_n)\leq& 2\max_{k'\in[K_{0,n}]}P\left(d(X_{k'},\mu_{k'})> \frac{D_{\mu,\min}}{4}\right)\leq2\exp\left\{-C\left(\frac{D_{\mu,\min}}{4}\right)^{\nu}\right\}.
\end{aligned}
\)

Due to Assumptions (v) and (vi), we have $D_{\mu,\min}^\nu/\log(n)\to\infty$, so
\[
\label{ineq:cond1_general}
P(D^2_{k\ell}<a_n)\precsim 1/n^3.
\]

\item Second, consider $P(D_{k'k'}^2>b_n)$ for $k'\in[K_{0,n}]$. We have
\(
&P(D_{k'k'}^2>b_n)\leq 2P(d(X_{k'},\mu_{k'})>\sqrt{b_n}/2)\leq 2\exp\left\{-C(\sqrt{b_n}/2)^\nu\right\}.
\)
By Assumption (v), we have $b_n^{\nu/2}/\log(n)\to\infty$, so
\[
\label{ineq:cond2_general}
P(D^2_{k'k'}>b_n)\precsim 1/n^3.
\]

\end{itemize}

Combining \eqref{ineq:cond1_general} and \eqref{ineq:cond2_general} and invoking Theorem~3.5 finishes the proof.

\end{proof}

\begin{proof}[Proof of Theorem~4.5]
Take $a'_n=D^2_{\mu,\min}/2$ and $b'_n=D^2_{\mu,\min}/4$.
Following the similar idea in \eqref{ineq:cond1} and \eqref{ineq:cond2}, we have
\(
\sup_{k\not=\ell;k,\ell\in[K_{0,n}]}P(D^2_{k\ell}<a'_n)+\sup_{k'\in[K_{0,n}]}P(D^2_{k'k'}>b'_n)\precsim \exp(-O(1)\cdot SNR^2).
\)
Since $SNR/\sqrt{\log(n)}\to\infty$, multiplying the above by $n^3$ yields the asymptotic upper bound $\exp(-O(1)\cdot SNR^2)$.

On the other hand, in light of the rate condition on $\sigma_n^2$, we have 
\(
\exp\left\{-\frac{a'_n-b'_n}{2\sigma_n^2}+n\log(K_{0,n}+1)\right\}&=\exp\left\{-O(1)\cdot\frac{D^2_{\mu,\min}}{\sigma_n^2}+o(1)\cdot\frac{D^2_{\mu,\min}}{\sigma_n^2}\right\}\\
&=\exp\left\{-O(1)\cdot\frac{\Lambda_{\max}}{\sigma_n^2}\cdot SNR^2\right\}\\
&\precsim\exp(-O(1)\cdot SNR^2).
\)
Invoking Theorem~3.7 finishes the proof.

\end{proof}

{
\section{Simulation Study}
\label{sec:simulation}

We conduct two numerical experiments to illustrate the finite-sample
behavior of the BSF clustering procedure and to characterize the
sample sizes at which the asymptotic guarantees become empirically
operative. The BSF posterior mode is computed following \cite{duan2023spectral},
which also provides guidelines for hyperparameter specification.
Each experiment uses $R = 50$ independent Monte Carlo replications.

\subsection{Convergence of misclassification rate}
\label{sec:expA}

We generate $n$ observations from two balanced Gaussian clusters
($K_0 = 2$, $n_k = n/2$) in one dimension with unit noise variance.
The separation $D_{\mu,{\min}}$ varies across three regimes:
\begin{enumerate}
  \item \textit{Easy:} $D_{\mu,{\min}} = n^{0.4}$, so
        $\mathrm{SNR}/\sqrt{\log n} \to \infty$.
        Corollary~4.3 guarantees consistency.
  \item \textit{Medium-hard:} $D_{\mu,{\min}} = (\log n)^{0.6}$, which grows
        but only marginally satisfies the threshold condition.
  \item \textit{Hard:} $D_{\mu,{\min}} = (\log n)^{0.4}$, at the lower edge
        of the identifiable regime; no polynomial rate of convergence
        is guaranteed.
\end{enumerate}
Sample sizes range over
$n \in \{20, 30, 40, 50, 60, 75, 100, 120, 150, 200, 300, 500\}$.

Table~\ref{tab:expA_main} reports mean misclassification rates at
$n \in \{60, 120, 300\}$; $\hat{P}(\hat{K}=K_0)=1.00$ throughout.
Table~\ref{tab:expA_onset} gives the full profile across all sample sizes.

In the easy regime, the rate drops from $0.054$ at $n=20$ to
essentially zero by $n=200$, consistent with rapid exponential
decay; the asymptotic regime is already operative at
$n \approx 50$, where the rate first falls below $1\%$.
In the medium-hard regime, convergence is slower: the rate is $17\%$
at $n=20$ and reaches $8\%$ only at $n=500$, with a clear downward
trend visible from $n \approx 75$ onward.
In the hard regime, the rate decreases very slowly, from $22\%$ at
$n=20$ to $16\%$ at $n=300$.

\begin{table}[ht]
  \centering
  \caption{Mean misclassification rate for $K_0=2$ balanced Gaussian
    clusters, based on 50 replications.}
  \label{tab:expA_main}
  \begin{tabular}{lccc}
    \hline
    Regime & $n=60$ & $n=120$ & $n=300$ \\
    \hline
    Easy        & 0.008 & 0.001 & 0.000 \\
    Medium-hard & 0.135 & 0.103 & 0.077 \\
    Hard        & 0.195 & 0.182 & 0.157 \\
    \hline
  \end{tabular}
\end{table}

\begin{table}[ht]
  \centering
  \caption{Mean misclassification rate across a fine grid of sample
    sizes, based on 50 replications.}
  \label{tab:expA_onset}
  \setlength{\tabcolsep}{7pt}
  \begin{tabular}{rccc}
    \hline
    $n$ & Easy & Medium-hard & Hard \\
    \hline
    20  & 0.054 & 0.173 & 0.219 \\
    30  & 0.035 & 0.169 & 0.218 \\
    40  & 0.019 & 0.145 & 0.211 \\
    50  & 0.009 & 0.134 & 0.196 \\
    60  & 0.008 & 0.135 & 0.195 \\
    75  & 0.004 & 0.108 & 0.180 \\
    100 & 0.001 & 0.107 & 0.182 \\
    120 & 0.001 & 0.103 & 0.182 \\
    150 & 0.000 & 0.096 & 0.166 \\
    200 & 0.000 & 0.087 & 0.165 \\
    300 & 0.000 & 0.077 & 0.157 \\
    500 & 0.000 & 0.067 & 0.149 \\
    \hline
  \end{tabular}
\end{table}

\subsection{Misclassification rate as a function of $\mathrm{SNR}$ and sample size}
\label{sec:expB}

\noindent\textit{Setup.}
With $n=200$ and $K_0=2$ fixed, we vary
$\Delta \in \{1,2,3,4,5,6\}$,
giving $\mathrm{SNR}^2 \in \{1,4,9,16,25,36\}$.
Theorem~4.5 predicts
\[
  \mathbb{E}_{P_0^{(n)}}\!\bigl[
    d_H(\hat{\mathcal{V}}_n,\,\mathcal{V}_n^0)
  \bigr]
  \;\precsim\;
  \exp\!\bigl(-O(1)\cdot\mathrm{SNR}^2\bigr),
\]
implying log-linear decay in $\mathrm{SNR}^2$.
To examine the onset of this behavior at fixed signal strength, we additionally run
across $n \in \{30,50,75,100,150,200,300\}$ at two representative
signal levels ($\Delta=2$ and $\Delta=4$).

Table~\ref{tab:expB_main} shows the rate dropping from $0.307$ at
$\mathrm{SNR}^2=1$ to $0.001$ at $\mathrm{SNR}^2=36$, spanning
nearly three orders of magnitude.
A linear regression of $\log(\text{mc})$ on $\mathrm{SNR}^2$
yields a strong fit, corroborating the exponential bound of
Theorem~4.5.

\begin{table}[ht]
  \centering
  \caption{Mean misclassification rate as a function of
    $\mathrm{SNR}^2$ for $n=200$, $K_0=2$, and 50 replications.}
  \label{tab:expB_main}
  \begin{tabular}{ccc}
    \hline
    $\Delta$ & $\mathrm{SNR}^2$ & Mean misclassification rate \\
    \hline
    1 &  1 & 0.307 \\
    2 &  4 & 0.158 \\
    3 &  9 & 0.066 \\
    4 & 16 & 0.023 \\
    5 & 25 & 0.006 \\
    6 & 36 & 0.001 \\
    \hline
  \end{tabular}
\end{table}

Table~\ref{tab:expB_onset} reports onset behavior as a function of $n$
at two fixed signal levels.
At $D_{\mu,{\min}}=4$ ($\mathrm{SNR}^2=16$), the misclassification rate is
$3\%$ at $n=30$ and remains stable at $2$--$3\%$ across all larger
$n$, indicating that the method reaches its optimal level at small
sample sizes when the signal is moderately strong.
At $D_{\mu,{\min}}=2$ ($\mathrm{SNR}^2=4$), the rate is approximately
$16$--$18\%$ throughout and shows no systematic decrease with $n$ beyond $n=50$.
This is consistent with theory: for $D_{\mu,{\min}}=2$, the Bayes error for
two unit-variance Gaussians is $\Phi(-1)\approx 16\%$, and the
method is already near-Bayes-optimal at $n=50$.

\begin{table}[ht]
  \centering
  \caption{Mean misclassification rate at two fixed signal levels,
    $K_0=2$, with 50 replications.}
  \label{tab:expB_onset}
  \begin{tabular}{rcc}
    \hline
    & \multicolumn{2}{c}{Mean misclassification rate} \\
    $n$ & $D_{\mu,{\min}}=2\ (\mathrm{SNR}^2=4)$
        & $D_{\mu,{\min}}=4\ (\mathrm{SNR}^2=16)$ \\
    \hline
     30 & 0.178 & 0.032 \\
     50 & 0.160 & 0.023 \\
     75 & 0.152 & 0.022 \\
    100 & 0.160 & 0.023 \\
    150 & 0.156 & 0.025 \\
    200 & 0.158 & 0.023 \\
    300 & 0.158 & 0.023 \\
    \hline
  \end{tabular}
\end{table}
}

\bibliographystyle{imsart-nameyear} 
\bibliography{references}

\end{document}